\definecolor{darkGreen}{rgb}{0,0.45,0}
\newtheorem{global-theorem}{Theorem}
\newtheorem{theorem}{Theorem}[section]
\newtheorem{lemma}[theorem]{Lemma}
\newtheorem{corollary}[theorem]{Corollary}
\newtheorem{proposition}[theorem]{Proposition}
\theoremstyle{definition}
\newtheorem{definition}[theorem]{Definition}
\newtheorem{example}[theorem]{Example}
\newtheorem{remark}[theorem]{Remark}
\newtheorem{conjecture}[theorem]{Conjecture}
\begin{document}

\title{Augmented Homotopical Algebraic Geometry}
\author{Scott Balchin}
\address{Department of Pure Mathematics\\
The Hicks Building\\
University of Sheffield\\
Sheffield S3 7RH, England, UK}
\email{scott.balchin@sheffield.ac.uk}

\begin{abstract}
We develop the framework for augmented homotopical algebraic geometry.  This is an extension of homotopical algebraic geometry, which itself is a homotopification of classical algebraic geometry.  To do so, we define the notion of augmentation categories, which are a special class of generalised Reedy categories.  For an augmentation category, we prove the existence of a closed Quillen model structure on the presheaf category which is compatible with the Kan-Quillen model structure on simplicial sets.  Moreover, we use the concept of augmented hypercovers to define a local model structure on the category of augmented presheaves.  We prove that crossed simplicial groups, and the planar rooted tree category are examples of augmentation categories. Finally, we introduce a method for generating new examples from old via a categorical pushout construction.
\end{abstract}

\maketitle

\section{Introduction}

Classically, algebraic geometry is the study of the zeros of multivariate polynomials, known as varieties.  Although this idea is still rooted in modern algebraic geometry, the tools used are now somewhat categorical and abstract.

The exodus towards categorical tools can, of course, be traced back to the school of Grothendieck in $20^\text{th}$ century France, with the seminal work of EGA and SGA~\cite{MR3075000,MR0476737}.  Of particular importance is the notion of a scheme, which subsumes the theory of affine varieties~\cite{MR0463157,MR1730819}.  For us, a scheme will be defined via its \emph{functor of points}, that is, a (1-)sheaf
\begin{equation}\label{defscheme} X \colon \textbf{Aff}^{op} \to \textbf{Set}, \end{equation}
where the topology on $\textbf{Aff}^{op}$ is usually taken to be the \'{e}tale topology~\cite{MR563524}.  This construction has led to a flourishing field of research fuelled by the emerging theory of categories, particularly that of $(\infty,1)$-categories.  We shall outline the natural modifications of the construction given by~(\ref{defscheme}) that have occurred throughout the years.  In turn, this will motivate the constructions that appear in this paper.

The first thing to note is how restrictive the category $\textbf{Set}$ is.  The study of moduli problems was marred by issues arising from non-trivial object isomorphisms (for example, isomorphisms of vector bundles over a scheme~\cite[\S 1.2]{MR2536079}).  To remedy this problem, the category of sets is enlarged to the (2-)category of groupoids~\cite{MR0220789,MR1771927}.  Therefore the definition of a \emph{stack} can be loosely worded as a (2-)sheaf  
\begin{equation}\label{defstack} X \colon \textbf{Aff}^{op} \to \textbf{Grpd}. \end{equation}

In some settings, the natural notion of equivalence for a geometric object is weaker than that of isomorphism.  To encode such information, a homotopical viewpoint is required.  The correct way to consider such objects is as an $(\infty,1)$-sheaf
\begin{equation}\label{defhighstack} X\colon \textbf{Aff}^{op} \to \widehat{\Delta}, \end{equation}
where $\widehat{\Delta}$ is the category of \emph{simplicial sets}.  Such objects are referred to as \emph{higher stacks}~\cite{hirsch,nstacks}.  A canonical example of a higher stack is that of the \emph{classifying stack} $K(G,n)$ for $G$ an abelian $k$-group scheme~\cite{whatis}.

The final stage of the current progression of construction~(\ref{defscheme}), unsurprisingly, is an enlargement of the source category of affine schemes.  Just as the category of simplicial sets is a homotopic version of \textbf{Set}, we can consider a homotopic version of affine schemes.  The resulting category is the $(\infty,1)$-category of \emph{derived affine schemes}.  A \emph{derived stack} is then an $(\infty,1)$-sheaf
\begin{equation}\label{defderstack} X\colon \textbf{dAff}^{op} \to \widehat{\Delta}. \end{equation}

The scope of this paper is to develop the tools necessary to take the above progression one step further by enlarging the simplex category $\Delta$ to an \emph{augmentation category} $\mathbb{A}$.  By considering the category of presheaves over an augmentation category, we get an extension of the category of simplicial sets. Via the construction of specific Quillen model structures, we will be able to encode the correct ``sheaf condition''.  Just as each modification of construction~(\ref{defscheme}) was undertaken to accommodate specific issues, the correct choice of an augmentation category can be used to encode necessary data.  Consider the situation where there is a need to encode an $SO(2)$-action on a derived stack.  Such an action cannot be inherently captured by the category of simplicial sets.  However, this problem is perfectly suited to the category of cyclic sets $\widehat{{\Delta \mathfrak{C}}}$, which is formed as a presheaf category on Connes' cyclic category ${\Delta \mathfrak{C}}$~\cite{connes1}.   Using the general framework of augmentation categories, we will show that it is possible to adjust construction~(\ref{defscheme}) to define \emph{$SO(2)$-equivariant derived stacks} as certain functors
\begin{equation}\label{defderstack22} X\colon \textbf{dAff}^{op} \to \widehat{{\Delta \mathfrak{C}}}. \end{equation}

All of the above discussion can be assembled as in Figure~\ref{diagramrel}, which is an extension of the one appearing in~\cite{whatis}:

\begin{figure}[ht]
  \centering
  \begin{minipage}[b]{0.4\textwidth}
  \[\begin{gathered}
  	\xymatrixcolsep{6pc}\xymatrixrowsep{4pc}\xymatrix{\textbf{Aff}^{op} \ar[dr]|{\mathcal{B}} \ar[ddr]|{\mathcal{C}} \ar[dd] \ar[r]|{\mathcal{A}} & \textbf{Set} \ar[d] \\
& \textbf{Grpd} \ar[d] \\
\textbf{dAff}^{op} \ar[r]|{\mathcal{D}} \ar[dr]|{\mathcal{E}}& \widehat{\Delta} \ar[d] \\ & \widehat{\mathbb{A}}}
\end{gathered}\]
  \end{minipage}
  \hfill
  \begin{minipage}[b]{0.5\textwidth}
	\begin{tabular}{l}
$\mathcal{A}$ - Schemes \\[10pt]
$\mathcal{B}$ - Stacks  \\[10pt]
$\mathcal{C}$ - Higher Stacks \\[10pt]
$\mathcal{D}$ - Derived Stacks \\[10pt]
$\mathcal{E}$ - Augmented Derived Stacks        
\end{tabular}
\end{minipage}
\caption{The diagram of relations for augmented derived algebraic geometry}\label{diagramrel}
\end{figure}
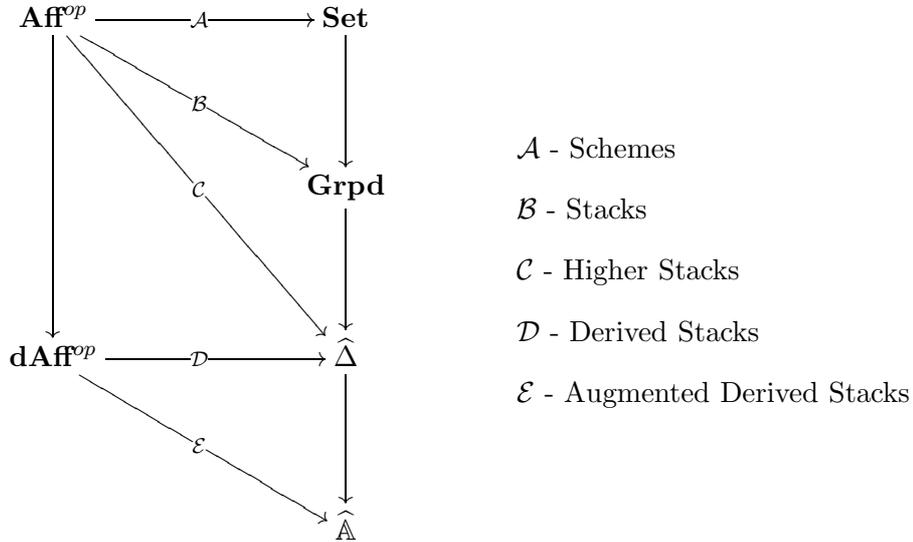

So far we have only considered the \emph{algebraic} setting, in that we work over the algebraic category of (derived) affine schemes.  We shall see that we can modify our source category to any (simplicial) site.  Classically, of particular significance are the categories \textbf{Top} (resp., \textbf{Mfd}) which give rise to topological (resp., differentiable) stacks~\cite{MR2817778,MR2977576,noohifound}.  

\subsection*{Outline of Paper}

\begin{itemize}
\item In Section~\ref{sec:2} we recall the theory of homotopical algebraic geometry via the construction of certain closed Quillen model structures.  Moreover, we recall the definition of the category of derived affine schemes, and give the construction of $n$-geometric derived stacks via $n$-hypergroupoids.
\item In Section~\ref{sec:3} we give the required machinery for the main definition of the paper, that of an augmentation category (Definition~\ref{augdef}).
\item In Section~\ref{sec:4} the homotopy theory of augmentation categories is discussed.  The existence of a closed Quillen model structure is proven for any augmentation category.  The model category relies on the construction of \emph{augmented Kan complexes} which mirror the simplicial analogue.  
\item In Section~\ref{sec:5} we develop the theory of augmented homotopical algebraic geometry via a Quillen model structure on the category of augmented presheaves.  Moreover, we define $n$-geometric augmented derived stacks using a modified $n$-hypergroupoid construction.
\item In Section~\ref{sec:6} we prove that crossed simplicial groups provide a class of examples of augmentation categories, and that the resulting geometric theory can be thought of as being equivariant.  We briefly discuss two applications of equivariant stacks which the author has developed in \cite{balchin,balchin2}.
\item In Section~\ref{sec:7} we prove that the category of planar rooted trees appearing in the theory of dendroidal sets is an augmentation category, with the resulting geometric theory being stable.  We then discuss an amalgamation property for augmentation categories which allows us to form new examples from old.
\end{itemize}

\section{Homotopical Algebraic Geometry}\label{sec:2}

We shall now utilise the theory of simplicial objects and Quillen model structures to discuss the theory of homotopical algebraic geometry.  A nice overview of the constructions in this section can be found in~\cite{MR3285853}.  We begin by recalling the notion of a \emph{site}~\cite{artin1962grothendieck}.

\begin{definition}
Let $\mathcal{C}$ be a category with all fiber products.  A \emph{Grothendieck topology} on $\mathcal{C}$ is given by a function $\tau$ which assigns to every object $U$ of $\mathcal{C}$ a collection $\tau(U)$, consisting of families of morphisms $\{\varphi_i \colon U_i \to U\}_{i \in I}$ with target $U$ such that the following axioms hold:

\begin{enumerate}
\item (Isomorphisms) If $U' \to U$ is an isomorphism, then $\{U' \to U\}$ is in $\tau(U)$.
\item (Transitivity) If the family $\{\varphi_i \colon U_i \to U\}$ in $\tau(U)$ and if for each $i \in I$ one has a family $\{ \varphi_{ij} : U_{ij} \to U_i\}_{j \in J}$ in $\tau(U_i)$ then the family $\{\varphi_{i}\circ \varphi_{ij} \colon U_{ij} \to U\}_{i \in I, j \in J}$ is in $\tau(U)$.
\item (Base change) If the family $\{\varphi_i \colon U_i \to U\}_{i \in I}$ is in $\tau(U)$ and if $V \to U$ is any morphism, then the family $\{V \times_U U_i \to V\}$ is in $\tau(V)$.
\end{enumerate}

The families in $\tau(U)$ are called \emph{covering families} for $U$ in the $\tau$-topology.  We call a category $\mathcal{C}$ with such a topology a \emph{site}, denoting it as $(\mathcal{C},\tau)$.
\end{definition}

Note that we can consider the case where our sites are simplicially enriched, such that the topology is defined on $\pi_0\mathcal{C}$.  We shall use this assumption throughout without ever making it explicit.  

The first task is to now construct the category of $\infty$-stacks on a site $\mathcal{C}$ using model structures in the sense of~\cite{hovey1999model}.  For a category $\mathcal{C}$, write $\widehat{\mathcal{C}}$ for the category of presheaves on $\mathcal{C}$.  We will denote by $\widehat{\Delta}_{\text{Kan}}$ the category of simplicial sets equipped with the Kan-Quillen model structure.  Let $\mathcal{C}$ be a site, the category of \emph{simplicial presheaves} on $\mathcal{C}$ is the functor category $\textbf{sPr}(\mathcal{C}) := \widehat{\Delta}^{\mathcal{C}^{op}}$.  As $\widehat{\Delta}_\text{Kan}$ is left-roper and combinatorial, we can equip $\textbf{sPr}(\mathcal{C})$ with the projective model structure, $\textbf{sPr}_\text{proj}(\mathcal{C})$, which is also left-proper and combinatorial.  The problem with this first model structure is that it does not see any of the topology $\tau$.  To encode this extra data, we add more weak equivalences, in effect, enforcing an $\infty$-sheaf condition.

\begin{definition}
Let $(\mathcal{C}, \tau)$ be a site.  A map $f \colon \mathcal{F} \to \mathcal{F}'$ in $\textbf{sPr}(\mathcal{C})$ is a \textit{local weak equivalence} if:
\begin{itemize}
\item The induced map $\pi_0^\tau \mathcal{F} \to \pi_0^\tau\mathcal{F}'$ is an isomorphism of sheaves, where $\pi_0^\tau$ is the sheafification of $\pi_0$.
\item Squares of the following form are pullbacks after sheafification:
$$
\xymatrix{\pi_n \mathcal{F} \ar[r] \ar[d] &\ar[d] \pi_n \mathcal{F}' \\
\mathcal{F}_0 \ar[r] &  \mathcal{F}'_0 \rlap{ .}}
$$
\end{itemize}
\end{definition}

\begin{theorem}[{\cite[\S 3]{jardinesimplicialpresheaves}}]\label{projlocmodel}
Let $(\mathcal{C}, \tau)$ be a site.  There exists a cofibrantly generated \textit{local model structure} on the category $\textbf{sPr}(\mathcal{C})$ where a map $f \colon \mathcal{F} \to \mathcal{F}'$ is a:
\begin{itemize}
\item Weak equivalence if it is a local weak equivalence.
\item Fibration if it has the RLP with respect to the trivial cofibrations.
\item Cofibration if it is a cofibration in $\textbf{sPr}_\text{proj}(\mathcal{C})$.
\end{itemize}
We will denote this model structure $\textbf{sPr}_\tau(\mathcal{C})$.
\end{theorem}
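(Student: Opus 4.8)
The plan is to realise $\textbf{sPr}_\tau(\mathcal{C})$ as a left Bousfield localisation of the projective model structure $\textbf{sPr}_\text{proj}(\mathcal{C})$ at a suitable set of maps, and then invoke the general existence theorem for left Bousfield localisations of left-proper combinatorial model categories. Concretely, I would first fix, for each object $U\in\mathcal{C}$ and each covering family $\{\varphi_i\colon U_i\to U\}\in\tau(U)$, the associated \v{C}ech nerve (a simplicial object in $\widehat{\mathcal{C}}$ built from the iterated fibre products $U_{i_0}\times_U\cdots\times_U U_{i_n}$), and take its homotopy colimit, giving a map $\check{C}(\{U_i\})\to U$ of simplicial presheaves. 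Let $S$ be the set of all such maps (one representative per covering family; this is a set because $\mathcal{C}$ is small and each $\tau(U)$ is a set of families). The claim will be that the $S$-local model structure on $\textbf{sPr}_\text{proj}(\mathcal{C})$ has exactly the weak equivalences, fibrations and cofibrations described in the statement.

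The steps, in order, are as follows. First I would recall that $\textbf{sPr}_\text{proj}(\mathcal{C})$ is left proper and combinatorial — this is already asserted in the excerpt — so by Smith's theorem (as in Hirschhorn or Lurie) the left Bousfield localisation $L_S\textbf{sPr}_\text{proj}(\mathcal{C})$ exists, is again left proper and combinatorial, has the same cofibrations as $\textbf{sPr}_\text{proj}(\mathcal{C})$, and is cofibrantly generated. That immediately yields the cofibration clause and the fibration clause (fibrations are, by construction of any localisation, the maps with the RLP against trivial cofibrations), and cofibrant generation. It remains to identify the $S$-local weak equivalences with the local weak equivalences of the preceding definition. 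Here I would argue that an object $\mathcal{F}$ is $S$-local precisely when it satisfies descent for all covering \v{C}ech nerves, i.e. $\mathcal{F}(U)\xrightarrow{\sim}\operatorname{holim}_{[n]}\mathcal{F}(U_{i_0}\times_U\cdots\times_U U_{i_n})$ for every covering family; this is the standard reformulation of the $\infty$-sheaf condition. One then shows a map of simplicial presheaves induces equivalences on all derived mapping spaces into such descent-objects if and only if it induces an isomorphism on $\pi_0^\tau$ and homotopy-Cartesian squares on the $\pi_n$-sheaves as in the definition — this is essentially Jardine's comparison, and I would cite \cite{jardinesimplicialpresheaves} (or Dugger–Hollander–Isaksen) for the fact that the \v{C}ech-local and the stalkwise/sheaf-theoretic local equivalences coincide for a Grothendieck site.

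The main obstacle is precisely this last identification: proving that the combinatorially-defined class of $S$-local equivalences coincides with the concretely-defined local weak equivalences. The subtlety is that left Bousfield localisation at the \v{C}ech covers a priori only gives \emph{hypercomplete} descent if one localises at hypercovers, versus ordinary \v{C}ech descent if one localises at \v{C}ech nerves; and the sheaf-of-homotopy-groups description in the definition must be matched to the correct one of these. For a site in the classical sense this is exactly the content of Jardine's theorem, so strictly speaking the cleanest route — and the one I would actually take in the paper — is not to reprove the localisation from scratch but to note that Theorem~\ref{projlocmodel} is literally Jardine's local projective model structure, cite \cite[\S 3]{jardinesimplicialpresheaves} for its existence and cofibrant generation, and merely verify that Jardine's weak equivalences agree with the $\pi_0^\tau$/pullback-of-$\pi_n$-sheaves formulation stated here, together with the observation that the cofibrations are unchanged because Jardine's construction is a left Bousfield localisation of the projective structure. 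That reduces the proof to a short bookkeeping argument citing the already-established literature, which is presumably the author's intent given the bracketed attribution.
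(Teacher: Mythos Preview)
The paper gives no proof of this theorem; it is stated with the bracketed attribution to Jardine and left at that. Your final paragraph correctly identifies this as the intended route, and in that sense your proposal agrees with the paper.

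That said, the longer sketch preceding your fallback has a genuine gap worth naming. Localising $\textbf{sPr}_\text{proj}(\mathcal{C})$ at the set $S$ of \v{C}ech nerves of covering families does \emph{not} in general yield Jardine's local model structure: the local weak equivalences of the statement (isomorphism on $\pi_0^\tau$ and pullback squares on $\pi_n$-sheaves) correspond to the hypercomplete localisation, and for an arbitrary Grothendieck site \v{C}ech descent and hyperdescent differ. You flag this as an ``obstacle'', but it is not a bookkeeping issue --- the set $S$ you wrote down is simply the wrong set, and the $S$-local equivalences need not coincide with the local weak equivalences. The paper in fact handles the Bousfield-localisation description as a \emph{separate} result immediately afterwards (the theorem citing Dugger--Hollander--Isaksen), where the localisation is taken at the class of \emph{hypercovers} rather than \v{C}ech covers, and even then the conclusion is only Quillen equivalence with $\textbf{sPr}_\tau(\mathcal{C})$, not identity. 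So if you wanted a self-contained argument along your lines you would have to replace $S$ by (a set of representatives for) hypercovers from the outset; your plan as written would prove a different theorem.
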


Before continuing, we introduce a different description of the local model structure using left Bousfield localisation (as already noted, $\textbf{sPr}_\text{proj}(\mathcal{C})$ is left-proper and combinatorial so such a localisation exists). 

\begin{definition}
Let $(\mathcal{C},\tau)$ be a site.  A morphism $f \colon \mathcal{F} \to \mathcal{F}'$ in $\textup{Ho}(\textbf{sPr}_\text{proj}(\mathcal{C}))$ is called a \emph{$\tau$-covering}  if the induced map $\pi_0^\tau (\mathcal{F}) \to \pi_0^\tau (\mathcal{F}')$ is an epimorphism of sheaves.
\end{definition}

\begin{definition}\label{hyperdef}
Let $(\mathcal{C}, \tau)$ be a site.  A map $f \colon X \to Y$ in $\textbf{sPr}(\mathcal{C})$ is a \textit{hypercovering} if for all $n \in \mathbb{N}$:
$$X_n \to \text{Hom}_{\widehat{\Delta}}(\partial \Delta[n] , X) \times_{\text{Hom}_{\widehat{\Delta}}(\partial \Delta[n] , Y)} Y_n$$
is a $\tau$-covering in $\textup{Ho}(\textbf{sPr}_\text{proj}(\mathcal{C}))$.
\end{definition}

\begin{theorem}[{\cite[Theorem 6.2]{MR2034012}}]
There is a model structure on $\textbf{sPr}(\mathcal{C})$ which is the left Bousfield localisation of $\textbf{sPr}_\text{proj}(\mathcal{C})$ at the class of hypercovers.  Moreover, this model is Quillen equivalent to $\textbf{sPr}_\tau(\mathcal{C})$.
\end{theorem}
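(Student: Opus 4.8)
The plan is to proceed in two stages, following the original argument of Dugger--Hollander--Isaksen: first establish that the left Bousfield localisation exists, and then identify it with Jardine's local model structure $\textbf{sPr}_\tau(\mathcal{C})$ of Theorem~\ref{projlocmodel}. Since $\textbf{sPr}_\text{proj}(\mathcal{C})$ is left proper and combinatorial, the standard existence result for left Bousfield localisations of left proper combinatorial model categories produces a localisation at any \emph{set} of morphisms. The only real subtlety in the existence half is that the hypercovers of Definition~\ref{hyperdef} form a proper class, so the first thing I would do is replace this class by a set with the same local objects. Fix a regular cardinal $\kappa$ larger than the cardinality of a skeleton of $\mathcal{C}$ and of each covering family; one then checks that every hypercover $X \to Y$ with $Y$ representable is refined by a $\kappa$-bounded hypercover, and that an object of $\textbf{sPr}_\text{proj}(\mathcal{C})$ is local with respect to all hypercovers if and only if it is local with respect to the $\kappa$-bounded ones. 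Here one uses precisely the stability of covering families under base change and composition, together with the fact that a simplicial presheaf is determined by its restriction to a skeleton. This yields a model structure $L_H\textbf{sPr}_\text{proj}(\mathcal{C})$.

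Next I would identify the fibrant objects on both sides. By the general theory of Bousfield localisation, the fibrant objects of $L_H\textbf{sPr}_\text{proj}(\mathcal{C})$ are exactly the objectwise-fibrant simplicial presheaves $F$ for which $\mathrm{Map}(Y,F) \to \mathrm{Map}(X,F)$ is a weak equivalence for every hypercover $X \to Y$; restricting to representable $Y$ and using the (co)Yoneda lemma, this is equivalent to the \emph{hyperdescent} condition that $F(Y) \to \mathrm{holim}_{[n]\in\Delta} F(X_n)$ is a weak equivalence for every hypercover $X_\bullet \to Y$ of a representable. I would then show the fibrant objects of $\textbf{sPr}_\tau(\mathcal{C})$ are characterised by the same hyperdescent condition. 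One inclusion uses that every hypercover is a local weak equivalence --- this follows by induction along the skeletal filtration of $X_\bullet$ from the $\tau$-covering hypothesis built into Definition~\ref{hyperdef} --- so a $\textbf{sPr}_\tau$-fibrant object is automatically local with respect to it; the reverse inclusion uses Jardine's characterisation of local fibrations together with the fact that a local weak equivalence between objectwise-fibrant objects satisfying hyperdescent is an objectwise weak equivalence.

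Finally, both $L_H\textbf{sPr}_\text{proj}(\mathcal{C})$ and $\textbf{sPr}_\tau(\mathcal{C})$ are left Bousfield localisations of the fixed model category $\textbf{sPr}_\text{proj}(\mathcal{C})$, so they have the same cofibrations (the projective cofibrations); having shown they also have the same fibrant objects, the standard fact that a left Bousfield localisation is determined by its class of fibrant (equivalently, local) objects forces the two model structures to coincide, so that in particular the identity functor is a Quillen equivalence. I expect the main obstacle to be the set-theoretic reduction of the first step --- taming the proper class of hypercovers by a bounded set of representatives --- closely followed by the verification that every hypercover is a local weak equivalence; once those two points are in place the remainder is formal manipulation of Bousfield localisations.
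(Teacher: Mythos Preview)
The paper does not supply its own proof of this theorem: it is stated with a citation to \cite[Theorem 6.2]{MR2034012} and then used as a black box. There is therefore nothing in the paper to compare your proposal against. Your outline is a faithful sketch of the Dugger--Hollander--Isaksen argument that the citation points to, including the set-theoretic reduction to a bounded family of hypercovers, the identification of the local objects via hyperdescent, and the conclusion that two left Bousfield localisations of the same underlying model category with the same fibrant objects must coincide.
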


\begin{definition}
For a site $(\mathcal{C}, \tau)$ the homotopy category $\textup{Ho}(\textbf{sPr}_\tau(\mathcal{C}))$ will be referred to as the category of \emph{$\infty$-stacks on $\mathcal{C}$}. 
\end{definition}

\subsection{Closed Monoidal Structure}

We now prove the existence of a closed monoidal structure on $\textup{Ho}(\textbf{sPr}_\tau(\mathcal{C}))$, where the monoidal structure is given by direct product. To do so, we show that $\textbf{sPr}_\tau(\mathcal{C})$ is a monoidal model category.  

\begin{definition}\label{monmodcat}
A (symmetric) monoidal model category is a model category $\mathcal{C}$ equipped with a closed (symmetric) monoidal structure $(\mathcal{C}, \otimes, I)$ such that the two following compatibility conditions are satisfied:
\begin{enumerate}
\item (Pushout-product axiom) For every pair of cofibrations $f \colon X \to Y$ and $f' \colon X' \to Y'$, their pushout-product
$$(X \otimes Y') \bigsqcup_{X \otimes X'} (Y \otimes X')\to Y \otimes Y'$$
is also a cofibration.  Moreover, it is a trivial cofibration if either $f$ or $f'$ is.
\item (Unit axiom) For every cofibrant object $X$ and every cofibrant resolution $0 \hookrightarrow QI \to I$ of the tensor unit, the induced morphism $QI \otimes X \to I \otimes X$ is a weak equivalence.  (Note that this holds automatically in the case that $I$ is cofibrant).
\end{enumerate}
\end{definition}

The proof of this property for $\textup{Ho}(\textbf{sPr}_\tau(\mathcal{C}))$ is non-trivial unless you make a small adjustment to the model that you are considering.  The problem that we currently face is that the class of cofibrations in $\textbf{sPr}_\tau(\mathcal{C})$ are not easily described. This is because in the projective model they are simply defined via a lifting property.   Instead, we choose a model which is Quillen equivalent to the one that we are interested in, and prove the existence of the monoidal model category structure for this.  The following proposition is proved in the same manner as Theorem~\ref{projlocmodel}.

\begin{proposition}
There is a model structure on $\textbf{sPr}(\mathcal{C})$ called the \emph{injective local model} where a map $f \colon \mathcal{F} \to \mathcal{F}'$ is a:
\begin{itemize}
\item Weak equivalence if it is a local weak equivalence.
\item Fibration if it has the RLP with respect to the trivial cofibrations.
\item Cofibration if it is a cofibration in the injective model on $\textbf{sPr}(\mathcal{C})$ (i.e., a point-wise monomorphism).
\end{itemize}
We shall denote by $\textbf{sPr}_{\textup{inj},\tau}(\mathcal{C})$ this model structure.
\end{proposition}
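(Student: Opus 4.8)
The plan is to follow the proof of Theorem~\ref{projlocmodel} essentially verbatim, with the projective global model structure on $\textbf{sPr}(\mathcal{C})$ replaced throughout by the injective one. Recall that the \emph{injective} global model structure on $\textbf{sPr}(\mathcal{C})$ --- pointwise weak equivalences, pointwise monomorphisms as cofibrations, and fibrations determined by the right lifting property --- exists and is left proper and combinatorial. Since it is left proper and combinatorial it admits a left Bousfield localisation at any set of maps, and one localises at the class of hypercovers of Definition~\ref{hyperdef}, which is essentially small because the covering sieves of $\tau$ can be bounded by a cardinal depending only on the site. This produces a left proper, combinatorial model structure on $\textbf{sPr}(\mathcal{C})$ whose cofibrations are still precisely the pointwise monomorphisms and whose fibrant objects are the injectively fibrant presheaves satisfying $\tau$-descent. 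It then remains to identify the weak equivalences of this localisation with the local weak equivalences of the paper: by the universal property, $f$ is a localisation equivalence iff it induces an equivalence on derived mapping spaces into every descent-local injectively fibrant object, and unwinding this against the hypercover characterisation, together with sheafification of the homotopy presheaves $\pi_n$, recovers exactly the two conditions in the definition of a local weak equivalence. This simultaneously yields the Quillen equivalence with $\textbf{sPr}_\tau(\mathcal{C})$, since the two structures then have the same weak equivalences and the cofibrations of $\textbf{sPr}_{\textup{inj},\tau}(\mathcal{C})$ are among the cofibrations of $\textbf{sPr}_\tau(\mathcal{C})$.

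Alternatively, and closer in spirit to the proof of Theorem~\ref{projlocmodel}, one constructs the structure directly in Jardine's style: declare cofibrations to be pointwise monomorphisms, weak equivalences to be local weak equivalences, and fibrations to be the maps with the right lifting property against trivial cofibrations, and then verify the closed model axioms by hand. Two-out-of-three and the retract axiom are immediate; the factorisation of a map as a cofibration followed by a trivial fibration is produced by the small object argument applied to the (evident set of) generating monomorphisms, once one knows that the trivial fibrations coincide with the maps having the right lifting property against that set.

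The step I expect to be the main obstacle is the remaining factorisation --- every map as a trivial cofibration followed by a fibration --- since a naïve small object argument requires a \emph{set} of generating trivial cofibrations, and it is not a priori clear that the class of monomorphisms which are local weak equivalences is so generated. The resolution is Jardine's bounded cofibration lemma: fixing an infinite cardinal $\kappa$ bounding the site and the topology, every trivial cofibration is the filtered colimit of its subobjects that are trivial cofibrations with complement of cardinality $< \kappa$, so the set of $\kappa$-bounded trivial cofibrations generates. One also needs that a fibration which is a local weak equivalence has the right lifting property against \emph{all} monomorphisms, i.e. is a pointwise trivial fibration; this is where left properness and a Boolean-localisation (stalkwise) argument enter, exactly as in~\cite{jardinesimplicialpresheaves}. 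Granting these two inputs the axioms close up, cofibrant generation is automatic, and left properness of $\textbf{sPr}_{\textup{inj},\tau}(\mathcal{C})$ follows from the fact that all cofibrations are monomorphisms.
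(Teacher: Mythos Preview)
Your proposal is correct and, in its second (``Jardine-style'') approach, is exactly what the paper intends: the paper's entire proof is the single sentence ``proved in the same manner as Theorem~\ref{projlocmodel}'', and that theorem is precisely Jardine's construction via direct verification of the model axioms using the bounded cofibration argument. Your first approach (left Bousfield localisation of the injective global model at hypercovers, then identification of the localised weak equivalences with the local weak equivalences) is a legitimate alternative route, closer in spirit to the Dugger--Hollander--Isaksen theorem the paper cites separately; it buys you left properness and cofibrant generation for free, at the cost of having to argue that hypercovers form an essentially small class and then unwinding the equivalence criterion. One small slip: in your final sentence you have the inclusion of cofibrations backwards --- it is the \emph{projective} cofibrations that are contained in the \emph{injective} ones (every projective cofibration is in particular a pointwise monomorphism), so the identity functor is left Quillen from $\textbf{sPr}_\tau(\mathcal{C})$ to $\textbf{sPr}_{\textup{inj},\tau}(\mathcal{C})$, not the other way.
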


\begin{lemma}
$\textbf{sPr}_{\textup{inj},\tau}(\mathcal{C})$ is a closed monoidal model category.
\end{lemma}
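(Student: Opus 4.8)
The plan is to verify the two axioms of Definition~\ref{monmodcat} for $\textbf{sPr}_{\textup{inj},\tau}(\mathcal{C})$ with the Cartesian monoidal structure $(\times, *)$, where $*$ is the terminal presheaf. Since the terminal object is certainly a pointwise monomorphism target from the initial object, it is cofibrant in the injective model, so the unit axiom (2) holds automatically and only the pushout-product axiom (1) requires work. For (1), I would first treat the non-local (unlocalised) injective model $\textbf{sPr}_{\textup{inj}}(\mathcal{C})$ and then transport the conclusion across the Bousfield localisation.

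First I would handle cofibrations. A cofibration in $\textbf{sPr}_{\textup{inj},\tau}(\mathcal{C})$ is exactly a pointwise monomorphism of simplicial presheaves, i.e. a monomorphism objectwise in $\widehat{\Delta}$. The Cartesian product of simplicial presheaves is computed objectwise, and both $\widehat{\Delta}_{\text{Kan}}$ and, more to the point, the category of simplicial sets with its \emph{monomorphisms} form a monoidal category in which the pushout-product of two monomorphisms is a monomorphism (this is the classical fact that $\widehat{\Delta}$ with Cartesian product and monomorphisms satisfies the pushout-product axiom, underlying the injective/Cisinski model structure on simplicial sets). Since colimits and products in $\textbf{sPr}(\mathcal{C})$ are computed pointwise, the pushout-product of two pointwise monomorphisms is again a pointwise monomorphism, which settles the cofibration part of~(1).

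The substantive part is the trivial-cofibration clause: if $f \colon X \to Y$ is a pointwise monomorphism that is in addition a local weak equivalence, and $f' \colon X' \to Y'$ is any pointwise monomorphism, then the pushout-product $f \,\square\, f'$ must again be a local weak equivalence (it is already a cofibration by the previous paragraph). Here I would argue as follows. Since $\textbf{sPr}_{\textup{inj},\tau}(\mathcal{C})$ is a left Bousfield localisation of $\textbf{sPr}_{\textup{inj}}(\mathcal{C})$ and all objects are cofibrant, it suffices to check that $- \times Y'$ and more generally pushout-product with a fixed injective cofibration $f'$ is a left Quillen functor on the localised model structure; by the universal property of Bousfield localisation this reduces to showing that $f \,\square\, f'$ is a local weak equivalence whenever $f$ is a \emph{generating} trivial cofibration of $\textbf{sPr}_{\textup{inj},\tau}(\mathcal{C})$, and then to closing up under the relevant saturated class. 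The key input is that local weak equivalences are stable under pushout-product with monomorphisms because local weak equivalences are precisely the maps inducing isomorphisms on all sheaves of homotopy groups, and smashing a local weak equivalence with a monomorphism $X' \hookrightarrow Y'$ can be analysed via the filtration of $Y'$ by the image of $X'$ together with the objectwise cells of $Y'/X'$: at each stage one takes a pushout along a map obtained from $f$ by Cartesian product with a representable (times a boundary inclusion), and $f \times (\text{representable} \times \partial\Delta[n] \hookrightarrow \text{representable} \times \Delta[n])$ is a local trivial cofibration because Cartesian product with a fixed object preserves local weak equivalences — this last point itself following from the fact that the local model structure is simplicial and that sheaves of homotopy groups are unchanged under such products. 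One then uses left properness of $\textbf{sPr}_{\textup{inj},\tau}(\mathcal{C})$ (inherited from $\textbf{sPr}_{\text{proj}}(\mathcal{C})$, since the two are Quillen equivalent and share weak equivalences) to conclude that the transfinite composite of these pushouts is again a local weak equivalence.

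The main obstacle I anticipate is precisely this trivial-cofibration clause: controlling local weak equivalences under the pushout-product is delicate because the localised weak equivalences are not described by a lifting property and are not closed under arbitrary constructions. The cleanest route is to reduce, via the standard cell-attachment / skeletal-filtration argument for simplicial presheaves, to the single statement that Cartesian product with an arbitrary simplicial presheaf preserves local weak equivalences between cofibrant objects, which in turn I would deduce from the simplicial structure of $\textbf{sPr}_{\textup{inj},\tau}(\mathcal{C})$ together with left properness; everything else is formal manipulation of pushout-products and saturated classes.
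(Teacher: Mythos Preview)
Your approach is correct and considerably more careful than the paper's own proof, which is a single sentence: ``as the cofibrations are monomorphisms, this is trivial.'' The paper is content to observe that the pushout-product of two monomorphisms is again a monomorphism and that the unit is cofibrant (every object being injectively cofibrant), and treats the trivial-cofibration clause of the pushout-product axiom as equally immediate without further comment.

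You correctly isolate this clause as the only point with any content. Your argument for it is sound in outline, though it can be streamlined considerably: once one knows that $-\times Z$ preserves local weak equivalences for every simplicial presheaf $Z$ (which follows from the stalkwise/homotopy-sheaf description of local weak equivalences, together with the fact that products of simplicial sets preserve all weak equivalences since every simplicial set is Kan--Quillen cofibrant), the trivial-cofibration clause drops out of a direct 2-out-of-3 argument on the pushout square defining $f\,\square\,f'$. There is no need to pass to generating trivial cofibrations of the localisation or to run a skeletal filtration. In short, your proposal supplies the justification that the paper's one-line proof leaves implicit; the paper simply declares the whole verification trivial.
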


\begin{proof}
We need to show that the pushout-product and unit axiom of Definition~\ref{monmodcat} hold.  However, as the cofibrations are monomorphisms, this is trivial.
\end{proof}

\begin{corollary}
$\textup{Ho}(\textbf{sPr}_{\textup{inj},\tau}(\mathcal{C})) \simeq \textup{Ho}(\textbf{sPr}_{\tau}(\mathcal{C}))$ is a closed monoidal category.  The internal-hom in $\textup{Ho}(\textbf{sPr}_{\tau}(\mathcal{C}))$ will be denoted $\mathbb{R}\underline{\mathcal{H}\text{om}}(-,-)$.
\end{corollary}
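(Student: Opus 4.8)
The plan is to obtain the monoidal structure from the preceding Lemma via the standard homotopy theory of monoidal model categories, and then to carry it across the equivalence of homotopy categories asserted in the statement.

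First I would apply the general fact that the homotopy category of a closed monoidal model category is itself closed monoidal: for any closed monoidal model category $\mathcal{M}$, the total left derived functor $-\otimes^{\mathbb{L}}-$ of the tensor product and the total right derived functor of the internal hom, together with the coherence isomorphisms descended from the point-set level, make $\mathrm{Ho}(\mathcal{M})$ into a closed monoidal category (see \cite[Theorem 4.3.2]{hovey1999model}). By the preceding Lemma, $\textbf{sPr}_{\textup{inj},\tau}(\mathcal{C})$ is such an $\mathcal{M}$, with monoidal product the cartesian product; here I should note that the point-set internal hom is available because $\textbf{sPr}(\mathcal{C}) = \widehat{\Delta}^{\mathcal{C}^{op}} \cong \widehat{\Delta \times \mathcal{C}}$ is a presheaf topos and hence cartesian closed. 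Thus $\mathrm{Ho}(\textbf{sPr}_{\textup{inj},\tau}(\mathcal{C}))$ is closed monoidal under the derived cartesian product. (As a bonus, in the injective model every object is cofibrant, since $\emptyset \to X$ is always a pointwise monomorphism, so no cofibrant replacements are needed in computing the derived product and the unit axiom is automatic.)

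Next I would observe that $\textbf{sPr}_{\textup{inj},\tau}(\mathcal{C})$ and $\textbf{sPr}_\tau(\mathcal{C})$ have the same underlying category $\textbf{sPr}(\mathcal{C})$ and, by construction, the same class $W$ of weak equivalences, namely the local weak equivalences. Hence both homotopy categories are canonically identified with the localisation $\textbf{sPr}(\mathcal{C})[W^{-1}]$, so the identity functor induces an equivalence (indeed an isomorphism) $\mathrm{Ho}(\textbf{sPr}_{\textup{inj},\tau}(\mathcal{C})) \simeq \mathrm{Ho}(\textbf{sPr}_\tau(\mathcal{C}))$; equivalently, one checks that the identity adjunction is a Quillen equivalence, every projective cofibration being a pointwise monomorphism and hence an injective cofibration. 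Transporting the closed monoidal structure across this equivalence endows $\mathrm{Ho}(\textbf{sPr}_\tau(\mathcal{C}))$ with a closed monoidal structure whose product is the derived cartesian product, and I would denote the resulting internal hom by $\mathbb{R}\underline{\mathcal{H}\text{om}}(-,-)$, as in the statement.

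I do not expect a genuine obstacle: the content is entirely a matter of assembling known facts. The only steps requiring care are the bookkeeping ones — confirming that the two model structures really do share their weak equivalences (so that identifying the two homotopy categories is legitimate), checking that the ambient presheaf category is cartesian closed so that the point-set internal hom is defined, and verifying that the monoidal-model-category machinery of \cite{hovey1999model} applies to $\textbf{sPr}_{\textup{inj},\tau}(\mathcal{C})$ verbatim.
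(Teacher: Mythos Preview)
Your proposal is correct and matches the paper's approach exactly: the corollary is stated without proof in the paper, as it follows immediately from the preceding lemma via the standard fact (Hovey, Theorem 4.3.2) that the homotopy category of a monoidal model category is closed monoidal, together with the observation that the injective and projective local models share their weak equivalences. You have simply made explicit the bookkeeping the paper leaves implicit.
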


We will refer to $\mathbb{R}\underline{\mathcal{H}\text{om}}(X,Y)$ as the \emph{derived mapping stack} from $X$ to $Y$ for two $\infty$-stacks $X,Y$.  One can compute this for each object $c \in \mathcal{C}$:
$$\mathbb{R}\underline{\mathcal{H}\text{om}}(F,G)(c) := \underline{\text{hom}}^\Delta(c \times F,R_\textup{inj}G),$$
where $R_\textup{inj}$ is the fibrant replacement functor in $\textbf{sPr}_\textup{inj}(\mathcal{C})$ and $\underline{\text{hom}}^\Delta$ is the simplicial-hom of $\text{sPr}(\mathcal{C})$.  We will denote by $\mathbb{R}\underline{\text{Hom}}$(-,-) the derived simplicial-hom.

\subsection{Geometric Homotopical Stacks}

We begin with a warning. There is a slight discrepancy between definitions in the literature.  The two main bodies of work in the literature are that of To\"{e}n-Vezzosi~\cite{MR2394633} and Lurie~\cite{MR2717174}, in which the notions of $n$-geometric stacks are slightly different.  From now on we will follow the conventions of~\cite{MR2394633}.

In the study of algebraic stacks, one is not interested in all objects.  Instead, one is interested in only those stacks which have a smooth or \'{e}tale atlas (this can also be worded in terms of internal groupoid objects).  We say that stacks with such an atlas are \emph{geometric}.  In this section we will introduce what we mean by a geometric $\infty$-stack.  We do this in two ways: the first uses an atlas representation as in the classical setting.  This method, however, is cumbersome for practical purposes.  Therefore we introduce the second method which uses the theory of \emph{hypergroupoids}.

We shall be interested in the site of \emph{derived affine schemes}, which we will denote \textbf{dAff}.  An introduction to this site can be found in~\cite{akhil}, while details of the \'{e}tale topology appear in~\cite[Chapter 2.2]{MR2394633}.

\subsubsection{Via Iterated Representability}\label{iterrep}

We fix our site $\mathcal{C} = \textbf{dAff}$, and a set of covering maps $\textbf{P}$ (usually we take $\textbf{P}$ to mean smooth or \'{e}tale).

\begin{definition}[{\cite[Definition 1.3.3.1]{MR2394633}}]
\leavevmode
\begin{enumerate}
\item A stack is \emph{$(-1)$-geometric} if it is representable.
\item A morphism of stacks $f \colon F \to G$ is \emph{$(-1)$-representable} if for any representable stack $X$ and any morphism $X \to G$ the homotopy pullback $F \times ^h_G X$ is a representable stack.
\item A morphism of stacks  $f \colon F \to G$ is \emph{in $(-1)$-\textbf{P}} if it is $(-1)$-representable and if for any representable stack $X$ and any morphism $X \to G$, the induced morphism $F \times_G^h X \to X$ is a $\textbf{P}$-morphism between representable stacks.
\end{enumerate}
We now let $n \geq 0$.
\begin{enumerate}
\item Let $F$ be any stack.  An \emph{$n$-atlas} for $F$ is a small family of morphisms $\{U_i \to F\}_{i \in I}$ such that
\begin{enumerate}
	\item Each $U_i$ is representable.
	\item Each morphism $U_i \to F$ is in $(n-1)$-$\textbf{P}$.
	\item The total morphism $\coprod_{i \in I} U_i \to F$ is an epimorphism.
\end{enumerate}
\item A stack $F$ is \emph{$n$-geometric} if it satisfies the following conditions:
\begin{enumerate}
	\item The diagonal $F \to F \times^h F$ is $(n-1)$-representable.
	\item The stack $F$ admits an $n$-atlas.
\end{enumerate}
\item A morphism of stacks $F \to G$ is \emph{$n$-representable} if for any representable stack $X$ and any morphism $X \to G$ the homotopy pullback $F \times^h_G X$ is $n$-geometric.
\item A morphism of stacks $F \to G$ is \emph{in $n$-$\textbf{P}$} if it is $n$-representable and if for any representable stack $X$, any morphism $X \to G$, there exists an $n$-atlas $\{U_i\}$ of $F \times^h_G X$ such that each composite morphism $U_i \to X$ is in $\textbf{P}$.
\end{enumerate}
We will say that a stack is \emph{geometric} if it is $n$-geometric for any $n$.
\end{definition}

\begin{definition}\label{index:gest}
The  full subcategory of $n$-geometric stacks of $\textup{Ho}(\textbf{sPr}_\tau(\mathcal{C}))$ will be denoted $\textbf{GeSt}_n(\mathcal{C},\textbf{P})$. In particular:
\begin{itemize}
\item $\textbf{GeSt}_n(\textbf{dAff},\text{sm})$ is the category of \emph{derived $n$-Artin stacks}.
\item $\textbf{GeSt}_n(\textbf{dAff},\text{\'{e}t})$ is the category of \emph{derived $n$-Deligne-Mumford stacks}.
\end{itemize}
\end{definition}

\subsubsection{Via $n$-Hypergroupoids}

In this section, we introduce a second representation of $n$-geometric stacks.  This method, via hypergroupoids, is more intuitive than the method presented in Section~\ref{iterrep}, and is reportedly closer to that envisaged by Grothendieck in~\cite{pursuing}.  An easy to read overview of the theory can be found in the paper~\cite{pridham2}, while the full results in the most general setting can be found in~\cite{MR3033634}.  The idea is that an $n$-geometric stack on the site $\mathcal{C}$ can be resolved by some fibrant object in the category $\mathcal{C}^{\Delta^{op}}$.  The impressive thing about this construction is that it completely avoids the need for the local model structure on simplicial presheaves, but is intricately linked to the notion of hypercovers nonetheless.  Note that an $n$-hypergroupoid captures the nerve construction of an $n$-groupoid~\cite{MR555549}.

\begin{definition}\label{hypergrp}
An $n$-hypergroupoid is an object $X \in {\widehat{\Delta}}$ for which the maps
$$X_m = \text{Hom}_{\widehat{\Delta}}(\Delta[m],X) \to \text{Hom}_{\widehat{\Delta}}(\Lambda^k[m],X)$$
are surjective for all $m$, $k$, and isomorphisms for all $m >n$ and all $k$.  In particular, $X$ is a Kan complex, and therefore fibrant in ${\widehat{\Delta}}_\text{Kan}$.
\end{definition}

It is possible to characterise $n$-hypergroupoids using the coskeleton construction. Recall that the \emph{$m$-coskeleton}, denoted $\text{cosk}_mX$, is defined to be $(\text{cosk}_mX)_i = \text{Hom}(\Delta[i]_{\leq m}, X_{\leq m})$ where $X_{\leq m}$ is the truncation at $m$. 

\begin{lemma}[{\cite[Proposition 3.4]{MR2112899}}]
An $n$-hypergroupoid $X$ is completely determined by its truncation $X_{\leq n+1}$. In fact, $X = \text{cosk}_{n+1}X$ (i.e., $X$ is $(n+1)$-coskeletal).  Conversely, a simplicial set of the form $\text{cosk}_{n+1}X$ is an $n$-hypergroupoid if and only if it satisfies the conditions of Definition~\ref{hypergrp} up to level $n+2$.
\end{lemma}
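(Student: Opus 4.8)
The plan is to prove both directions via the coskeleton adjunction $\mathrm{tr}_m \dashv \mathrm{cosk}_m$ between simplicial sets and $m$-truncated simplicial sets, together with the standard fact that $\mathrm{cosk}_m$ followed by restriction recovers the truncation. First I would recall that for any simplicial set $X$ the unit $X \to \mathrm{cosk}_m X$ is an isomorphism in degrees $\leq m$ and a monomorphism in degree $m+1$, and that a map $Y \to \mathrm{cosk}_m Z$ is determined by its restriction to the $m$-truncation. The key elementary observation is that the horn-filling condition $X_p \to \mathrm{Hom}(\Lambda^k[p], X)$ being a bijection can be rephrased: a horn $\Lambda^k[p] \to X$ is the same as a compatible family of faces, i.e. an element of $(\mathrm{cosk}_{p-1}X)_p$ or more precisely of the appropriate matching object, so bijectivity of the map in degree $p$ is essentially a coskeletality statement one degree down.

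For the forward direction, suppose $X$ is an $n$-hypergroupoid. I would show $X = \mathrm{cosk}_{n+1}X$ by checking the unit map $X \to \mathrm{cosk}_{n+1}X$ is an isomorphism in every degree. It is automatically an isomorphism in degrees $\leq n+1$. For degrees $p \geq n+2$, I would argue inductively: an element of $(\mathrm{cosk}_{n+1}X)_p$ is a compatible system of $(n+1)$-dimensional faces, equivalently (by the inductive hypothesis that $X$ agrees with its coskeleton below degree $p$) a compatible system of all proper faces, i.e. an element of $\mathrm{Hom}(\partial\Delta[p], X)$; the hypergroupoid condition says $X_p \to \mathrm{Hom}(\Lambda^k[p],X)$ is a bijection for all $k$, and a standard diagram chase (filling one horn, then checking all remaining faces agree using a second horn condition, exactly as in the proof that Kan complexes with unique horn fillers above level $n$ are $(n+1)$-coskeletal) upgrades this to $X_p \xrightarrow{\sim} \mathrm{Hom}(\partial\Delta[p],X) = (\mathrm{cosk}_{n+1}X)_p$. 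This shows $X$ is $(n+1)$-coskeletal and hence determined by $X_{\leq n+1}$.

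For the converse, suppose $Y = \mathrm{cosk}_{n+1}Z$ and that $Y$ satisfies the conditions of Definition~\ref{hypergrp} up to level $n+2$, meaning the maps $Y_m \to \mathrm{Hom}(\Lambda^k[m],Y)$ are surjective for $m \leq n+2$ and bijective for $m = n+2$ (and all $k$). I must extend these properties to all $m$. Surjectivity and bijectivity for $m \leq n+2$ are given; for $m \geq n+3$ I would again induct, using that $\mathrm{cosk}_{n+1}$ is idempotent and that for $m \geq n+2$ the natural map $\mathrm{Hom}(\Lambda^k[m], \mathrm{cosk}_{n+1}Z) \to \mathrm{Hom}(\partial\Delta[m], \mathrm{cosk}_{n+1}Z)$ is a bijection — because a horn already contains all faces of dimension $\leq n+1$, and a coskeletal object's higher simplices are freely determined by those. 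Thus for $m \geq n+3$ both $Y_m$ and $\mathrm{Hom}(\Lambda^k[m],Y)$ coincide with $\mathrm{Hom}(\partial\Delta[m]_{\leq n+1}, Z_{\leq n+1})$, forcing the map to be an isomorphism, which is in particular surjective. Combining, $Y$ satisfies Definition~\ref{hypergrp} in all degrees, so it is an $n$-hypergroupoid.

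The main obstacle I anticipate is the bookkeeping in the forward direction where one must pass from "all horns fill" to "the boundary-matching map is a bijection": filling a single horn $\Lambda^k[p] \to X$ gives a candidate simplex, but one must verify that its $k$-th face (the one not specified by the horn) is the prescribed one, and this requires invoking the horn condition for a different horn $\Lambda^j[p]$ and a compatibility argument — the classical Moore-type argument. Handling the degenerate simplices correctly and making the induction on degree airtight (so that the reduction of $\mathrm{cosk}_{n+1}X$ in degree $p$ to $\mathrm{Hom}(\partial\Delta[p],X)$ genuinely holds) is the delicate point; everything else is formal manipulation of the $(\mathrm{tr},\mathrm{cosk})$ adjunction. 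I would likely cite~\cite{MR2112899} for the precise combinatorial lemma rather than reproduce it, since it is standard.
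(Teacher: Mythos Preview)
The paper does not supply its own proof of this lemma; it simply records the statement and cites \cite[Proposition 3.4]{MR2112899}. Your sketch is the standard Duskin argument and is essentially correct: the forward direction hinges on upgrading ``unique horn fillers above level $n$'' to ``unique boundary fillers above level $n+1$'' via the observation that the missing face $d_0\sigma$ and the prescribed face $x_0$ share all of \emph{their} faces and hence agree by horn-uniqueness one dimension down, and the converse is the skeleton computation $\mathrm{sk}_{n+1}\Lambda^k[m] = \mathrm{sk}_{n+1}\Delta[m]$ for $m \geq n+3$. Since there is no paper proof to compare against, there is nothing further to contrast; your plan matches what one finds in the cited source.
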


\begin{definition}
A morphism $f \colon X \to Y$ in ${\widehat{\Delta}}$ is a \emph{trivial relative $n$-hypergroupoid} if the  maps
$$X_m \to \text{Hom}_{\widehat{\Delta}}(\partial \Delta[m] , X) \times_{\text{Hom}_{\widehat{\Delta}}(\partial \Delta[m] , Y)} Y_m$$
are surjections for all $m$, and isomorphisms for all $m \geq n$.  In particular, $f$ is a trivial fibration in ${\widehat{\Delta}}_\text{Kan}$.
\end{definition}

\begin{lemma}[{\cite[Lemma 2.9]{MR3033634}}]\label{cosktriv}
Let $f \colon X \to Y$ be a trivial $n$-hypergroupoid. Then $X = Y \times_{\text{cosk}_{n-1} Y} \text{cosk}_{n-1}X$.
\end{lemma}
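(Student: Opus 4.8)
The plan is to exploit the $(n+1)$-coskeletality that a (trivial relative) $n$-hypergroupoid forces, combined with the fact that $\mathrm{cosk}$ is right adjoint to the truncation functor $\mathrm{tr}$, so that its behaviour on limits is well-controlled. First I would record the shape of the claim: the identity
\[
X = Y \times_{\mathrm{cosk}_{n-1} Y}\, \mathrm{cosk}_{n-1} X
\]
asserts that $X$ is determined by $Y$ together with the $(n-1)$-type of $X$; equivalently, that the canonical comparison map from $X$ to this fibre product is an isomorphism. Since everything lives in the presheaf category $\widehat{\Delta}$, which is complete, it suffices to check this isomorphism levelwise, i.e. in degrees $m = 0, 1, 2, \dots$.

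Next I would split the verification at the threshold $m = n-1$. For $m \le n-1$ the statement is essentially formal: applying $\mathrm{cosk}_{n-1}$ does not change anything below level $n-1$ (the counit $\mathrm{cosk}_{n-1}Z \to Z$ is an isomorphism on $\mathrm{tr}_{n-1}$), so in those degrees both sides agree with $X_m$, using that $X_m \to Y_m$ factors appropriately. The content is in degrees $m \ge n$. Here I would use the hypothesis that $f \colon X \to Y$ is a trivial $n$-hypergroupoid, so the matching maps
\[
X_m \longrightarrow \mathrm{Hom}_{\widehat{\Delta}}(\partial\Delta[m], X)\times_{\mathrm{Hom}_{\widehat{\Delta}}(\partial\Delta[m],Y)} Y_m
\]
are isomorphisms for all $m \ge n$. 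This says exactly that, above level $n-1$, each $X_m$ is freely reconstructed from its boundary data together with $Y_m$; iterating this downward-building identification and feeding in that $\mathrm{Hom}_{\widehat{\Delta}}(\partial\Delta[m], X)$ depends only on $\mathrm{tr}_{m-1}X$, one sees that $X$ in degrees $\ge n$ is the coskeletal extension of its $\mathrm{tr}_{n-1}$-data relative to $Y$. Unwinding the adjunction $\mathrm{tr}_{n-1} \dashv \mathrm{cosk}_{n-1}$ and the preservation of the pullback $Y\times_{\mathrm{cosk}_{n-1}Y}(-)$ then identifies this with the claimed fibre product.

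The main obstacle I expect is the bookkeeping in degrees $m \ge n$: one must show that the levelwise matching-object isomorphisms assemble into a global isomorphism of simplicial sets compatible with the face and degeneracy maps, and in particular that the fibre product $Y \times_{\mathrm{cosk}_{n-1}Y}\mathrm{cosk}_{n-1}X$ really does have the "right" matching objects in each degree — i.e. that $\mathrm{Hom}_{\widehat\Delta}(\partial\Delta[m], Y\times_{\mathrm{cosk}_{n-1}Y}\mathrm{cosk}_{n-1}X)$ computes the expected pullback of boundary data. This is where the interaction between $\partial\Delta[m]$ (a colimit of representables) and the coskeleton/truncation adjunction has to be handled carefully; I would isolate it as a lemma comparing the matching objects of a fibre product with the fibre product of matching objects, then conclude by induction on $m$ starting from $m = n$. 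A reference such as \cite[Proposition 3.4]{MR2112899} or the $n$-hypergroupoid discussion in \cite{MR3033634} can be invoked for the purely coskeletal input if one wishes to shorten this step.
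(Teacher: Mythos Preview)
The paper does not actually supply a proof of this lemma: it is stated with a citation to \cite[Lemma 2.9]{MR3033634} and no argument is given. The only place the paper proves an analogous statement is the augmented version in Section~5, and there the entire proof is the single sentence ``This follows from comparing the definition to the general result about augmented coskeletal objects in Lemma~\ref{coskrep}.'' In other words, the paper's approach is to observe that the matching-map isomorphisms for $m \ge n$ are precisely the unique-filler condition of Lemma~\ref{coskrep} applied to the map $X \to Y \times_{\mathrm{cosk}_{n-1}Y}\mathrm{cosk}_{n-1}X$, so that map is an isomorphism in degrees $\ge n$; in degrees $\le n-1$ it is tautologically an isomorphism.

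Your proposal is correct and is a more explicit unpacking of exactly this argument: you split into $m \le n-1$ (formal) and $m \ge n$ (use the matching isomorphisms), and you flag the one genuine bookkeeping point, namely that the matching object of the fibre product is the fibre product of matching objects. That step is immediate because $\mathrm{Hom}_{\widehat{\Delta}}(\partial\Delta[m],-)$ is a limit-preserving functor and $\mathrm{cosk}_{n-1}$ is a right adjoint, so the induction you outline goes through without difficulty. Compared with the paper's one-line invocation of Lemma~\ref{coskrep}, your route is the same in substance but spells out the levelwise verification; the paper's phrasing is terser but relies on the reader recognising that the coskeletality criterion of Lemma~\ref{coskrep} does the inductive work for free.
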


We can compare the above definition and property of trivial $n$-hypergroupoids with the definition of a hypercover (Definition~\ref{hyperdef}), and observe that a trivial $n$-hypergroupoid can be seen as a \emph{truncated} or \emph{bounded} hypercover.

We can move to a geometric setting by considering objects in $\textbf{sdAff} := \textbf{dAff}^{\Delta^{op}}$.  We also change the surjectivity condition to be surjectivity with respect to a class of covering maps (i.e., smooth or \'{e}tale). 

\begin{definition}
A \emph{derived Artin (resp., Deligne-Mumford) $n$-hypergroupoid} is a simplicial derived affine scheme $X \in \textbf{sdAff}$ such that the  maps
$$X_m = \text{Hom}_{\widehat{\Delta}}(\Delta[m],X) \to \text{Hom}_{\widehat{\Delta}}(\Lambda^k[m],X)$$
are smooth (resp., \'{e}tale) surjections for all $m$, $k$, and isomorphisms for all $m >n$ and all $k$. 
\end{definition}

Note that given an (Artin or Deligne-Mumford) $n$-hypergroupoid $X$, we can construct a simplicial presheaf $X$ on $\textbf{dAff}$ as follows:
\begin{align*}
X \colon \textbf{dAff}^{op} &\to {\widehat{\Delta}},\\
X(A)_n & := X_n(A).
\end{align*}

\begin{definition}
A morphism $f \colon X \to Y$ in $\textbf{sdAff}$ is a \emph{trivial relative derived Artin (resp., Deligne-Mumford) $n$-hypergroupoid} if the  maps
$$X_m \to \text{Hom}_{\widehat{\Delta}}(\partial \Delta[m] , X) \times_{\text{Hom}_{\widehat{\Delta}}(\partial \Delta[m] , Y)} Y_m$$
are smooth (resp., \'{e}tale) surjections for all $m$, and isomorphisms for all $m \geq n$ (i.e., are $n$-truncated hypercovers).
\end{definition}

Recall from~\cite{MR2877401} that a \emph{relative category} is a pair $(\mathcal{C},\mathcal{W})$ consisting of a category $\mathcal{C}$ and a wide subcategory $\mathcal{W}$ whose maps are called \emph{weak equivalences}.  Such a category has a homotopy category by formally inverting all of the weak equivalences.

\begin{definition}\label{index:hyper}
The \emph{category of derived $n$-Artin (resp., Deligne-Mumford) stacks} is obtained from the full subcategory of $\textbf{sdAff}$ consisting of the relative category of derived Artin (resp., Deligne-Mumford) $n$-hypergroupoids and the trivial relative derived Artin (resp., Deligne-Mumford) $n$-hypergroupoids as the weak equivalences.  We will denote this category $\mathcal{G}_n^\text{sm}(\textbf{dAff})$ (resp., $\mathcal{G}_n^\text{\'{e}t}(\textbf{dAff})$).
\end{definition}

The following theorem is the main result from~\cite{MR3033634}, and proves that we can move freely (up to homotopy) between the $n$-hypergroupoid and $n$-geometric stack constructions.

\begin{theorem}[{\cite[Theorem 4.15]{MR3033634}}]\label{equivcat}
There is an equivalence of categories
\begin{align*}
\textbf{GeSt}_n(\textbf{dAff},\text{sm})  & \simeq \mathcal{G}_n^\text{sm}(\textbf{dAff}), \\
\textbf{GeSt}_n(\textbf{dAff},\text{\'{e}t}) & \simeq \mathcal{G}_n^\text{\'{e}t}(\textbf{dAff}).
\end{align*}
In fact, such an equivalence can be formulated for any homotopical algebraic geometric context \cite[Definition 1.3.2.13]{MR2394633}.
\end{theorem}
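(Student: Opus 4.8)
The plan is to build explicit functors in the two directions and to check that they descend to mutually inverse equivalences after inverting the respective weak equivalences; both the construction and its verification proceed by induction on $n$, the base of the induction identifying representable stacks with constant simplicial derived affine schemes. Throughout I treat the smooth (Artin) case; the \'{e}tale (Deligne--Mumford) case is word-for-word the same with ``smooth'' replaced by ``\'{e}tale''.

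\emph{From hypergroupoids to stacks.} Given a derived Artin $n$-hypergroupoid $X_\bullet \in \textbf{sdAff}$, view it via Yoneda as a simplicial object of $\textbf{sPr}_\tau(\textbf{dAff})$ (as already described in the excerpt) and let $|X_\bullet|$ be its homotopy colimit over $\Delta^{op}$, equivalently the diagonal of the associated bisimplicial presheaf. First I would verify that $|X_\bullet|$ is $n$-geometric: the canonical map $X_0 \to |X_\bullet|$ serves as an $n$-atlas, and one sees that it lies in $(n-1)$-$\textbf{P}$ by analysing the homotopy fibre product $X_0 \times^h_{|X_\bullet|} X_0$; the Kan conditions on $X_\bullet$ control this fibre product in terms of $X_1$ together with the partial matching objects (this is where Lemma~\ref{cosktriv} and the coskeleton characterisation do the work), and the inductive hypothesis applied to the resulting $(n-1)$-geometric object supplies the required geometricity. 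The condition that $X_m \to \text{Hom}_{\widehat{\Delta}}(\Lambda^k[m],X)$ be an isomorphism for $m > n$ forces, by descent along the atlas, the diagonal $|X_\bullet| \to |X_\bullet| \times^h |X_\bullet|$ to be $(n-1)$-representable. Finally, a trivial relative $n$-hypergroupoid $f \colon X_\bullet \to Y_\bullet$ is, as noted just after Lemma~\ref{cosktriv}, a bounded hypercover, hence a local weak equivalence upon realisation; so $X_\bullet \mapsto |X_\bullet|$ is a well-defined functor $\mathcal{G}_n^{\text{sm}}(\textbf{dAff}) \to \textbf{GeSt}_n(\textbf{dAff},\text{sm})$.

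\emph{From stacks to hypergroupoids, and the composites.} Conversely, given an $n$-geometric stack $F$, choose an $n$-atlas and let $U \to F$ be its total (smooth, surjective) map; form the \v{C}ech nerve $\check{C}(U/F)_\bullet$, whose degree-$m$ term is the $(m{+}1)$-fold homotopy fibre product of $U$ over $F$. Since the diagonal of $F$ is $(n-1)$-representable, each of these terms is $(n-1)$-geometric, so by the inductive hypothesis each admits a resolution by an $(n-1)$-hypergroupoid; choosing such resolutions coherently produces a bisimplicial derived affine scheme, and I take its diagonal $R(F)_\bullet \in \textbf{sdAff}$. The point is then that $R(F)_\bullet$ is a derived Artin $n$-hypergroupoid: its partial matching maps are smooth surjections, being assembled from atlas maps and from the matching maps of the chosen $(n-1)$-hypergroupoid resolutions, and they are isomorphisms above level $n$ by a dimension count marrying the coskeleton characterisation with the $(n-1)$-representability of the diagonal; different choices of atlas and of sub-resolutions are linked by trivial relative $n$-hypergroupoids, so $F \mapsto R(F)_\bullet$ is well defined into $\mathcal{G}_n^{\text{sm}}(\textbf{dAff})$. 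For the composites: realising $R(F)_\bullet$ recovers $F$, because the atlas $U \to F$ is a $\tau$-covering, so $\check{C}(U/F)_\bullet \to F$ is a hypercover and $|\check{C}(U/F)_\bullet| \simeq F$, while passing from the \v{C}ech terms to their hypergroupoid resolutions alters the realisation only by a further local weak equivalence; in the other direction, $R(|X_\bullet|)_\bullet$ is obtained from $X_\bullet$ by choosing an atlas of $|X_\bullet|$ — for which $X_0 \to |X_\bullet|$ already works — and resolving the resulting fibre products, so $X_\bullet$ and $R(|X_\bullet|)_\bullet$ sit in a zigzag of trivial relative $n$-hypergroupoids and hence agree in $\mathcal{G}_n^{\text{sm}}(\textbf{dAff})$. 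This establishes the equivalence.

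\emph{Generalisation and main obstacle.} For the last sentence one observes that the argument used nothing about $\textbf{dAff}$ beyond the existence of homotopy fibre products, a class $\textbf{P}$ of maps stable under homotopy base change and composition and local on the source, and the descent statement underlying Theorem~\ref{projlocmodel}; hence the same proof runs verbatim in any homotopical algebraic geometric context in the sense of~\cite[Definition 1.3.2.13]{MR2394633}. The principal difficulty throughout is the bookkeeping that pins down the \emph{geometric level}: proving that the realisation of an $n$-hypergroupoid is $n$-geometric and not merely geometric, and dually that the iterated-atlas construction yields a hypergroupoid whose partial matching maps really are isomorphisms above degree $n$. Both hinge on a delicate analysis of partial matching objects and the coskeleton (Lemma~\ref{cosktriv} and the coskeleton characterisation) fed through the inductive control of the diagonal, and making the many choices of atlases and sub-resolutions coherent enough for the two composites to be the identity up to the prescribed weak equivalences is the remaining technical cost.
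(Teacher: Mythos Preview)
The paper does not prove this theorem: it is stated as a citation of \cite[Theorem~4.15]{MR3033634} and no argument is given beyond the sentence ``The following theorem is the main result from~\cite{MR3033634}''. There is therefore no proof in the paper to compare your proposal against.

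That said, your sketch is a faithful outline of Pridham's actual argument in \cite{MR3033634}: the functor from hypergroupoids to geometric stacks is indeed the homotopy colimit (diagonal of the bisimplicial presheaf), the inverse is built by iterating atlas choices and taking \v{C}ech nerves, and both directions are organised by induction on $n$ with the coskeleton/matching-object bookkeeping carrying the geometric level. Your identification of the main technical burden --- controlling the partial matching maps so that the level is exactly $n$, and making the tower of choices coherent up to trivial relative hypergroupoids --- is accurate; in Pridham's paper this is handled by a careful duality between the relative matching and partial-matching conditions together with a simplicial resolution argument (his \S3--4). One small caution: the step ``choosing such resolutions coherently produces a bisimplicial derived affine scheme'' is where most of the work hides, and Pridham does not literally take a diagonal of a bisimplicial object but rather constructs the resolution degree by degree using the factorisation properties of the class $\textbf{P}$; your description is morally right but compresses the most delicate part.
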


\section{Augmentation Categories}\label{sec:3}

In this section we recall the theory of generalised Reedy categories from~\cite{genreedy}, and use them to formulate the framework of an \emph{augmentation category}.

\subsection{Generalised Reedy Categories}

A (strict) Reedy category $\mathbb{S}$ is a category such that we can equip $\mathcal{E}^{\mathbb{S}^{op}}$ with a model structure, for $\mathcal{E}$ a cofibrantly generated model category \cite{strictreedy}.  The classes of maps in this model structure can be described explicitly using those of $\mathcal{E}$.  An example of a strict Reedy category is the simplex category $\Delta$.  One shortcoming of strict Reedy categories is that they do not allow for non-trivial automorphisms on the objects, which occur, for example, in the cyclic category of Connes, introduced in \cite{connes1}.  A \emph{generalised Reedy category} allows us to capture this automorphism data.  Recall that a subcategory $\mathcal{D} \subset \mathcal{C}$ is said to be \emph{wide in $\mathcal{C}$} if $\text{Ob}(\mathcal{C}) = \text{Ob}(\mathcal{D})$. The following definition appears in \cite{genreedy}.

\begin{definition}[{\cite[Definition 1.1]{genreedy}}]\label{def:genreedy}
A \textit{generalised Reedy structure} on a small category $\mathbb{R}$ consists of wide subcategories $\mathbb{R}^+$, $\mathbb{R}^-$, and a degree function $d \colon \text{Ob}(\mathbb{R})\to \mathbb{N}$ satisfying the following four axioms:
\begin{enumerate}
\item[i)] Non-invertible morphisms in $\mathbb{R}^+$ (resp., $\mathbb{R}^-$) raise (resp., lower) the degree; isomorphisms in $\mathbb{R}$ preserve the degree.
\item[ii)] $\mathbb{R}^+ \cap \mathbb{R}^- = \text{Iso}(\mathbb{R})$.
\item[iii)] Every morphism $f$ of $\mathbb{R}$ factors uniquely (up to isomorphism) as $f=gh$ with $g \in \mathbb{R}^+$ and $h \in \mathbb{R}^-$.
\item[iv)] If $\theta f = f$ for $\theta \in \text{Iso}(\mathbb{R})$ and $f \in \mathbb{R}^-$, then $\theta$ is an identity.
Moreover, we say that the generalised Reedy structure is \emph{dualisable}  if the following additional axiom holds:
\item[iv)$'$] If $f \theta = f$ for $\theta \in \text{Iso}(\mathbb{R})$ and $f \in \mathbb{R}^+$, then $\theta$ is an identity.
\end{enumerate}
A \emph{morphism} of generalised Reedy categories is a functor $\mathbb{R} \to \mathbb{R}'$ which takes $\mathbb{R}^+$ (resp., $\mathbb{R}^-$) to $\mathbb{R}'^+$ (resp., $\mathbb{R}'^-$) and preserves the degree.
\end{definition}

It is possible to generate a large class of generalised Reedy categories using the theory of crossed groups on categories \cite{MR923136}.  

\begin{definition}[{\cite[Proposition 2.5]{genreedy}}]\label{csgrpdeff}
Let $\mathbb{R},\mathbb{S}$ be categories such that $\mathbb{R} \subseteq \mathbb{S}$ is a wide subcategory. Assume that for all $s \in \mathbb{S}$, there exist subgroups $\mathfrak{G}_s \subseteq \text{Aut}_\mathbb{S}(s)$ of special automorphisms such that each morphism in $\mathbb{S}$ factors uniquely as a special automorphism followed by a morphism in $\mathbb{R}$.  Then $\mathbb{S}$ is a \emph{crossed $\mathbb{R}$-group}, which we denote $\mathbb{R}\mathfrak{G}$.  What we call a crossed $\mathbb{R}$-group is sometimes referred to as the \emph{total category} of a crossed $\mathbb{R}$-group.
\end{definition}

There is a compatibility condition appearing in the following proposition which we will not cover as all categories that we will consider will satisfy it \cite[Remark 2.9]{genreedy}.

\begin{proposition}[{\cite[Proposition 2.10]{genreedy}}]\label{genreedycrossed}
Let $\mathbb{R}$ be a strict Reedy category, and $\mathbb{R} \mathfrak{G}$ a compatible crossed $\mathbb{R}$ group.  Then there is a unique dualisable generalised Reedy structure on $\mathbb{R} \mathfrak{G}$ for which the embedding $\mathbb{R} \hookrightarrow \mathbb{R} \mathfrak{G}$ is a morphism of generalised Reedy categories.
\end{proposition}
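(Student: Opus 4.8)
The plan is to build the generalised Reedy structure on $\mathbb{R}\mathfrak{G}$ directly from the one on $\mathbb{R}$, keeping the degree function unchanged (this is forced, since $\mathbb{R}$ and $\mathbb{R}\mathfrak{G}$ have the same objects by Definition~\ref{csgrpdeff} and the embedding must preserve degree) and pushing the two wide subcategories forward along the crossed-group factorisation. By the defining property of a crossed $\mathbb{R}$-group, every morphism $f\colon s\to t$ of $\mathbb{R}\mathfrak{G}$ has a unique factorisation $f=\rho\,\sigma$ with $\sigma\in\mathfrak{G}_s$ a special automorphism and $\rho\in\mathbb{R}$. I would then declare $f\in(\mathbb{R}\mathfrak{G})^{+}$ (resp.\ $(\mathbb{R}\mathfrak{G})^{-}$) precisely when the $\mathbb{R}$-part $\rho$ of this canonical factorisation lies in $\mathbb{R}^{+}$ (resp.\ $\mathbb{R}^{-}$); equivalently $(\mathbb{R}\mathfrak{G})^{\pm}=\mathbb{R}^{\pm}\cdot\mathfrak{G}$. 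With these definitions the embedding $\mathbb{R}\hookrightarrow\mathbb{R}\mathfrak{G}$ manifestly preserves degree and carries $\mathbb{R}^{\pm}$ into $(\mathbb{R}\mathfrak{G})^{\pm}$.

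Next I would verify the axioms of Definition~\ref{def:genreedy}. The recurring observation is that $f=\rho\,\sigma$ is invertible in $\mathbb{R}\mathfrak{G}$ if and only if $\rho$ is invertible in $\mathbb{R}$ (the embedding reflects isomorphisms by uniqueness of the crossed-group factorisation, and $\sigma$ is always an isomorphism); hence $\text{Iso}(\mathbb{R}\mathfrak{G})=\{\rho\,\sigma:\rho\in\text{Iso}(\mathbb{R})\}$. Axiom (i) follows because a non-invertible $f=\rho\,\sigma$ in $(\mathbb{R}\mathfrak{G})^{+}$ has non-invertible $\rho\in\mathbb{R}^{+}$, and $\sigma$ fixes the source, so $f$ raises the degree; axiom (ii) because $(\mathbb{R}\mathfrak{G})^{+}\cap(\mathbb{R}\mathfrak{G})^{-}$ consists of the $\rho\,\sigma$ with $\rho\in\mathbb{R}^{+}\cap\mathbb{R}^{-}=\text{Iso}(\mathbb{R})$, which is exactly $\text{Iso}(\mathbb{R}\mathfrak{G})$. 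For (iii), factor $\rho=\rho^{+}\rho^{-}$ in $\mathbb{R}$ and write $f=\rho^{+}\,(\rho^{-}\sigma)$; this is a $(\mathbb{R}\mathfrak{G})^{+}$-$(\mathbb{R}\mathfrak{G})^{-}$ factorisation since $\rho^{-}\sigma$ is already in canonical form, and uniqueness up to isomorphism follows by reducing an arbitrary such factorisation to canonical form with the crossed-group relations and then invoking uniqueness of the Reedy factorisation in $\mathbb{R}$ together with uniqueness of the crossed-group factorisation. Axioms (iv) and (iv)$'$ reduce to the corresponding statements for the strict Reedy category $\mathbb{R}$ — which are automatic, $\mathbb{R}$ having only identity isomorphisms — after tracking how a special automorphism is absorbed into an $\mathbb{R}^{-}$- (resp.\ $\mathbb{R}^{+}$-) morphism under the crossed structure.

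For uniqueness, suppose $((\mathbb{R}\mathfrak{G})^{+}_{\star},(\mathbb{R}\mathfrak{G})^{-}_{\star},d_{\star})$ is another generalised Reedy structure for which $\mathbb{R}\hookrightarrow\mathbb{R}\mathfrak{G}$ is a morphism of generalised Reedy categories. Since the embedding is bijective on objects and preserves degree, $d_{\star}=d$. As the embedding sends $\mathbb{R}^{\pm}$ into $(\mathbb{R}\mathfrak{G})^{\pm}_{\star}$, special automorphisms lie in $\text{Iso}(\mathbb{R}\mathfrak{G})\subseteq(\mathbb{R}\mathfrak{G})^{+}_{\star}\cap(\mathbb{R}\mathfrak{G})^{-}_{\star}$, and each class is closed under composition, we obtain $(\mathbb{R}\mathfrak{G})^{\pm}_{\star}\supseteq\mathbb{R}^{\pm}\cdot\mathfrak{G}=(\mathbb{R}\mathfrak{G})^{\pm}$. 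Conversely, if $f=\rho\,\sigma\in(\mathbb{R}\mathfrak{G})^{+}_{\star}$ then $\rho=f\sigma^{-1}\in(\mathbb{R}\mathfrak{G})^{+}_{\star}$; comparing the trivial factorisation $\rho=\rho\cdot\mathrm{id}$ with the image of the Reedy factorisation $\rho=\rho^{+}\rho^{-}$ and applying axiom (iii) for the $\star$-structure forces $\rho^{-}$ to be invertible in $\mathbb{R}\mathfrak{G}$, hence in $\mathbb{R}$, hence an identity since $\mathbb{R}$ is strict Reedy; so $\rho=\rho^{+}\in\mathbb{R}^{+}$ and $f\in(\mathbb{R}\mathfrak{G})^{+}$. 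The same argument on the other side gives $(\mathbb{R}\mathfrak{G})^{-}_{\star}=(\mathbb{R}\mathfrak{G})^{-}$.

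The main obstacle I anticipate is not any single axiom but the repeated need to control how special automorphisms interact with the $\mathbb{R}^{+}$/$\mathbb{R}^{-}$ dichotomy: one must know that rewriting a composite of the form $\sigma\rho$ (special automorphism after an $\mathbb{R}$-morphism) as (an $\mathbb{R}$-morphism)$\,\circ\,$(special automorphism) preserves membership in $\mathbb{R}^{+}$ and in $\mathbb{R}^{-}$, so that $\mathbb{R}^{\pm}\cdot\mathfrak{G}$ are genuinely wide subcategories and the factorisation in (iii) is well posed. This is exactly what the compatibility hypothesis on $\mathbb{R}\mathfrak{G}$ (the condition of \cite[Remark 2.9]{genreedy}) is designed to supply; it is the single place where more than formal manipulation is required, and everything else is bookkeeping with the two unique-factorisation properties.
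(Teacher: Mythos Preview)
The paper does not give its own proof of this proposition: it is quoted verbatim from \cite[Proposition~2.10]{genreedy} and used as a black box, so there is nothing in the present paper to compare your argument against. That said, your outline is exactly the standard construction carried out in \cite{genreedy}: keep the degree function, set $(\mathbb{R}\mathfrak{G})^{\pm}=\mathbb{R}^{\pm}\cdot\mathfrak{G}$, and use the compatibility hypothesis to ensure these are closed under composition; you have also correctly isolated the one non-formal point, namely that rewriting $\sigma\rho$ as $\rho'\sigma'$ must preserve membership in $\mathbb{R}^{+}$ and $\mathbb{R}^{-}$, which is precisely the content of the compatibility condition in \cite[Remark~2.9]{genreedy}.
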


We shall now use the degree function appearing in the Definition \ref{def:genreedy} to define the notion of (co)skeleton.  Denote by $\mathbb{R}_{\leq n}$ the subcategory of $\mathbb{R}$ consisting of objects of degree $\leq n$.  Write $t_n \colon \mathbb{R}_{\leq n} \hookrightarrow \mathbb{R}$ for the corresponding full embedding.

\begin{definition}[{\cite[Definition 6.1]{genreedy}}]
Let $\mathbb{R}$ be a generalised Reedy category.
\begin{itemize}
\item The \emph{$n$-skeleton} functor is the endofunctor $\textup{sk}_n := t_{n!}t^\ast_n$.
\item The \emph{$n$-coskeleton} functor is the endofunctor $\textup{cosk}_n := t_{n\ast}t^\ast_n$.
\end{itemize}
\end{definition}

The class of EZ-categories are a subclass of generalised Reedy categories for which the skeletal filtrations have a nice description.  These skeletal filtrations can in turn be described by a corresponding \emph{boundary object} using notions of \emph{face} and \emph{degeneracy} maps analogous to the simplicial case.  These boundary objects will allow us to give an explicit description of when objects in the presheaf category are coskeletal.

\begin{definition}[{\cite[Definition 6.6]{genreedy}}]
An \emph{EZ-category} (Eilenberg-Zilber category) is a small category $\mathbb{R}$, equipped with a degree function $d \colon \text{Ob}(\mathbb{R}) \to \mathbb{N}$, such that
\begin{enumerate}
\item Monomorphisms preserve (resp., raise) the degree if and only if they are invertible (resp., non-invertible).
\item Every morphism factors as a split epimorphism followed by a monomorphism.
\item Any pair of split epimorphisms with common domain gives rise to an absolute pushout (recall an \emph{absolute pushout} is a pushout preserved by the Yoneda embedding $\mathbb{R} \hookrightarrow \widehat{\mathbb{R}}$).
\end{enumerate}
An EZ-category is a dualisable generalised Reedy category with $\mathbb{R}^+$ (resp., $\mathbb{R}^-$) defined to be the wide subcategory containing all monomorphisms (resp., epimorphisms).  Moreover, we will say that an EZ-category $\mathbb{R}$ is \emph{symmetric promagmoidal} if $\widehat{\mathbb{R}}$ has a symmetric tensor product $(\widehat{\mathbb{R}},\square,I_\square)$.  Clearly any presheaf category carries the cartesian product, but often the tensor structure that we work with will be different to the cartesian product.
\end{definition}

\begin{definition}
Let $\mathbb{R}$ be an EZ-category.  Denote by $\mathbb{R}[r]$ the representable presheaf of $r \in \mathbb{R}$ in the topos $\widehat{\mathbb{R}}$.  The split-epimorphisms will be called the \emph{degeneracy operators} and the monomorphisms will be called the \emph{face operators}.  
\end{definition}

\begin{definition}
Let $\mathbb{R}$ be an EZ-category and $r \in \mathbb{R}$.  The \emph{boundary}, $\partial \mathbb{R}[r] \subset \mathbb{R}[r]$ is the subobject of those elements of $\mathbb{R}[r]$ which factor through a non-invertible face operator $s \to r$. Explicitly, 
$$\partial \mathbb{R}[r] = \bigcup_{f \colon s \to r} f(\mathbb{R}[s]).$$
\end{definition}

\begin{lemma}[{\cite[Corollary 6.8]{genreedy}}]\label{isskele}
Let  $\mathbb{R}$ be an EZ-category and $r \in \mathbb{R}$, then $\partial \mathbb{R}[r]= \textup{sk}_{d(r)-1}\mathbb{R}[r]$. 
\end{lemma}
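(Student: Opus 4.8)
The plan is to regard both $\partial\mathbb{R}[r]$ and $\textup{sk}_{d(r)-1}\mathbb{R}[r]$ as subobjects of $\mathbb{R}[r]$ expressed as unions of images of representables, and then to match up these two families of generators using the Eilenberg--Zilber factorisation. First I would record the explicit description of the skeleton functor: since $\textup{sk}_n = t_{n!}t_n^\ast$, the pointwise formula for the left Kan extension $t_{n!}$ shows that, at each $u \in \mathbb{R}$, the image of the counit $\textup{sk}_n\mathbb{R}[r] \to \mathbb{R}[r]$ in $\mathbb{R}[r](u) = \mathbb{R}(u,r)$ consists of exactly those $h \colon u \to r$ which factor through some object of degree $\leq n$. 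Equivalently, $\textup{sk}_n\mathbb{R}[r] = \bigcup_{g} g(\mathbb{R}[v])$, the union being taken over all morphisms $g \colon v \to r$ with $d(v) \leq n$. That this is a genuine subobject of $\mathbb{R}[r]$ (i.e.\ that the skeleton inclusion is a monomorphism) is part of the skeletal-filtration machinery of~\cite{genreedy} that underlies this corollary, and I would either quote it or reprove it from the EZ axioms.

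Granting this description, the inclusion $\partial\mathbb{R}[r] \subseteq \textup{sk}_{d(r)-1}\mathbb{R}[r]$ is immediate: a non-invertible face operator $f \colon s \to r$ is a non-invertible monomorphism, hence raises the degree by the first EZ axiom, so $d(s) \leq d(r)-1$, and therefore $f(\mathbb{R}[s])$ is one of the generating subobjects of $\textup{sk}_{d(r)-1}\mathbb{R}[r]$; taking the union over all such $f$ gives the claim. For the reverse inclusion, take any $g \colon v \to r$ with $d(v) \leq d(r)-1$ and factor it, by the second EZ axiom, as $g = m e$ with $e \colon v \to t$ a split epimorphism and $m \colon t \to r$ a monomorphism. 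A section of $e$ is a split monomorphism, hence a monomorphism, so $d(t) \leq d(v) \leq d(r)-1 < d(r)$ by the first EZ axiom; since an invertible morphism preserves the degree, $m$ must be non-invertible, i.e.\ a non-invertible face operator. Hence $g(\mathbb{R}[v]) \subseteq m(\mathbb{R}[t]) \subseteq \partial\mathbb{R}[r]$, and taking the union over all such $g$ yields $\textup{sk}_{d(r)-1}\mathbb{R}[r] \subseteq \partial\mathbb{R}[r]$. The degenerate case $d(r)=0$ is handled by the same reasoning with the convention $\textup{sk}_{-1} = \emptyset$, since then there are no non-invertible face operators into $r$ and both sides are the initial presheaf.

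The genuinely load-bearing step is the first one: pinning down $\textup{sk}_n\mathbb{R}[r]$ as the union of images of maps out of objects of degree $\leq n$, and in particular that the skeleton inclusion is monic. Everything after that is a two-line unwinding of the two Eilenberg--Zilber factorisation axioms together with the degree behaviour of (split) monomorphisms, so I expect no further difficulty. If one prefers, this first step can simply be cited from the preceding results of~\cite{genreedy}, of which the present statement is an immediate corollary.
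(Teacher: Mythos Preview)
Your argument is correct. The paper does not supply its own proof of this lemma: it is simply quoted as \cite[Corollary 6.8]{genreedy}, so there is nothing to compare against beyond noting that your approach---describing $\textup{sk}_{d(r)-1}\mathbb{R}[r]$ as the union of images of representables of lower degree and then using the EZ factorisation axioms to match this with the boundary---is exactly the standard unwinding that the cited corollary records.
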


The above definition of boundary coincides exactly with the definition of boundary in the simplicial case.  We can now say when an object $X \in \widehat{\mathbb{R}}$ is coskeletal.

\begin{lemma}\label{coskrep}
Let $\mathbb{R}$ be an EZ-category, and $X \in \widehat{\mathbb{R}}$.  Then the following are equivalent:
\begin{enumerate}
\item The unit of the adjunction $X \to \textup{cosk}_n(X)$ is an isomorphism.
\item The map $X_r =\textup{Hom}(\mathbb{R}[r],X) \to \textup{Hom}(\mathbb{R}[r]_{\leq n},X_{\leq n})$ is a bijection for all $r$ with $d(r) >n$.
\item For all $r$ with $d(r) > n$, and every morphism $\partial  \mathbb{R}[r] \to X$, there exists a unique filler $\mathbb{R}[r] \to X$:
$$\xymatrix{\partial \mathbb{R}[r] \ar[r] \ar[d]& X\ \\
\mathbb{R}[r] \ar@{-->}[ur]}$$
\end{enumerate}
If $X$ satisfies any of these equivalent definitions we shall say that $X$ is \emph{$n$-coskeletal}.
\end{lemma}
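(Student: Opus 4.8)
The plan is to prove the three-way equivalence by the usual cycle $(1) \Rightarrow (2) \Rightarrow (3) \Rightarrow (1)$, using the adjunction $t_n^\ast \dashv t_{n\ast}$ together with the description of the boundary from Lemma~\ref{isskele}. First I would unpack the definitions: by construction $\textup{cosk}_n = t_{n\ast}t_n^\ast$, so for any $X \in \widehat{\mathbb{R}}$ and any $r \in \mathbb{R}$ we have, by the defining universal property of the right Kan extension $t_{n\ast}$,
\begin{equation*}
(\textup{cosk}_n X)_r = \textup{Hom}_{\widehat{\mathbb{R}}}(\mathbb{R}[r], t_{n\ast}t_n^\ast X) \cong \textup{Hom}_{\widehat{\mathbb{R}_{\leq n}}}(t_n^\ast \mathbb{R}[r], t_n^\ast X) = \textup{Hom}(\mathbb{R}[r]_{\leq n}, X_{\leq n}),
\end{equation*}
where $\mathbb{R}[r]_{\leq n}$ denotes the restriction $t_n^\ast \mathbb{R}[r]$ and similarly for $X_{\leq n}$. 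Moreover this identification is natural and, chasing through, the unit $X \to \textup{cosk}_n X$ at the object $r$ is exactly the restriction map $\textup{Hom}(\mathbb{R}[r], X) \to \textup{Hom}(\mathbb{R}[r]_{\leq n}, X_{\leq n})$ appearing in (2). For objects $r$ with $d(r) \leq n$ this map is always a bijection (the restriction functor is fully faithful on the subcategory spanned by such representables — equivalently $\textup{cosk}_n$ is idempotent on $n$-skeleta), so the content of both (1) and (2) is concentrated on the objects of degree $>n$. This gives $(1) \Leftrightarrow (2)$ essentially for free.

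For $(2) \Rightarrow (3)$ and $(3) \Rightarrow (2)$, the key input is Lemma~\ref{isskele}, which identifies $\partial \mathbb{R}[r] = \textup{sk}_{d(r)-1}\mathbb{R}[r]$. Since the $n$-skeleton is built from representables of degree $\leq n$, for $r$ with $d(r) > n$ one has $\textup{sk}_n \mathbb{R}[r] = \textup{sk}_n(\textup{sk}_{d(r)-1}\mathbb{R}[r]) = \textup{sk}_n \partial\mathbb{R}[r]$, and restriction to degrees $\leq n$ only sees the $n$-skeleton: $\mathbb{R}[r]_{\leq n}$ is the restriction of $\textup{sk}_n\mathbb{R}[r]$, which equals the restriction of $\partial\mathbb{R}[r]$. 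Concretely, a morphism $\mathbb{R}[r]_{\leq n} \to X_{\leq n}$ corresponds (by adjunction, or directly since $\partial\mathbb{R}[r]$ is a colimit of representables of degree $< d(r)$, hence determined by its restriction once $d(r) > n$... more carefully, one uses that $\partial\mathbb{R}[r] \hookrightarrow \mathbb{R}[r]$ is an iso in degrees $< d(r)$ and in particular in all degrees $\leq n$) to a morphism $\partial\mathbb{R}[r] \to X$, since $\partial\mathbb{R}[r]$ is $n$-skeletal and maps out of an $n$-skeletal presheaf are determined by their restriction to degrees $\leq n$. Then condition (2) — that $X_r \to \textup{Hom}(\mathbb{R}[r]_{\leq n}, X_{\leq n})$ is a bijection — translates directly into: every map $\partial\mathbb{R}[r] \to X$ extends uniquely along $\partial\mathbb{R}[r] \hookrightarrow \mathbb{R}[r]$, which is (3). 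The reverse implication reverses this translation.

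The step I expect to be the main obstacle is the careful bookkeeping in $(2) \Leftrightarrow (3)$: namely verifying cleanly that $\textup{Hom}(\mathbb{R}[r]_{\leq n}, X_{\leq n}) \cong \textup{Hom}_{\widehat{\mathbb{R}}}(\partial\mathbb{R}[r], X)$ for $d(r) > n$. This requires knowing that $\partial\mathbb{R}[r]$, being $\textup{sk}_{d(r)-1}\mathbb{R}[r]$ and hence a quotient of a coproduct of representables $\mathbb{R}[s]$ with $d(s) < d(r)$, is left Kan extended from $\mathbb{R}_{\leq n}$ when $d(r) - 1 \geq n$ — equivalently that $\textup{sk}_n$ applied to it is itself, so that maps out of it are detected on degrees $\leq n$. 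One should either cite the general fact that for an EZ-category (more generally a generalised Reedy category) a presheaf is $n$-skeletal iff it is the left Kan extension of its $n$-truncation, and that $\textup{Hom}$ out of an $n$-skeletal presheaf into any $Y$ equals $\textup{Hom}$ of the $n$-truncations (which is the adjunction $t_{n!} \dashv t_n^\ast$), or prove it directly from the colimit presentation of $\partial\mathbb{R}[r]$. Once this identification is in hand, all three conditions become literal restatements of one another and the proof is complete. Everything else — the idempotence of $\textup{cosk}_n$ on low degrees, the naturality of the unit — is routine adjunction-chasing.
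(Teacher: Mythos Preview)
Your overall strategy matches the paper's: unwind $\textup{cosk}_n$ via the adjunction to get $(\textup{cosk}_nX)_r \cong \textup{Hom}(\textup{sk}_n\mathbb{R}[r],X)$, and then invoke Lemma~\ref{isskele} to bring $\partial\mathbb{R}[r]$ into the picture. The equivalence $(1)\Leftrightarrow(2)$ is fine.

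The gap is in your key identification $\textup{Hom}(\mathbb{R}[r]_{\leq n},X_{\leq n}) \cong \textup{Hom}_{\widehat{\mathbb{R}}}(\partial\mathbb{R}[r],X)$ for $d(r)>n$. You justify this by asserting that $\partial\mathbb{R}[r]$ is $n$-skeletal (``$\textup{sk}_n$ applied to it is itself''), but that is false in general. Lemma~\ref{isskele} only gives $\partial\mathbb{R}[r]=\textup{sk}_{d(r)-1}\mathbb{R}[r]$; when $d(r)>n+1$ this is strictly larger than $\textup{sk}_n\mathbb{R}[r]$. Concretely, in $\Delta$ with $n=0$ and $r=[2]$, the boundary $\partial\Delta[2]$ has three nondegenerate $1$-simplices and is not $0$-skeletal, so $\textup{Hom}(\partial\Delta[2],X)\not\cong\textup{Hom}(\Delta[2]_{\leq 0},X_{\leq 0})$ for a general $X$.

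The repair is one of two moves. Either (i) argue by induction on $d(r)$: the base case $d(r)=n+1$ is exactly where Lemma~\ref{isskele} yields $\textup{sk}_n\mathbb{R}[r]=\partial\mathbb{R}[r]$, and for higher degree one uses that the skeletal filtration $\textup{sk}_n\mathbb{R}[r]\subset\cdots\subset\textup{sk}_{d(r)-1}\mathbb{R}[r]=\partial\mathbb{R}[r]\subset\mathbb{R}[r]$ is built by pushouts along boundary inclusions $\partial\mathbb{R}[s]\hookrightarrow\mathbb{R}[s]$ with $n<d(s)<d(r)$, to which condition (3) already applies. Or (ii), in the direction $(1)\Rightarrow(3)$, use the hypothesis that $X$ is $n$-coskeletal so that $\textup{Hom}(A,X)\cong\textup{Hom}(\textup{sk}_nA,X)$ for \emph{every} presheaf $A$; then both $\textup{Hom}(\mathbb{R}[r],X)$ and $\textup{Hom}(\partial\mathbb{R}[r],X)$ reduce to $\textup{Hom}(\textup{sk}_n\mathbb{R}[r],X)$, since $\textup{sk}_n\mathbb{R}[r]=\textup{sk}_n\partial\mathbb{R}[r]$ whenever $d(r)>n$. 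The paper's own proof is terse enough to elide this same step, but your explicit claim that $\partial\mathbb{R}[r]$ is $n$-skeletal is the point that needs correcting.
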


\begin{proof}
The equivalence of the first two conditions follows from the definition.  For the final condition, note that $\textup{cosk}_k(X)$ if given by the formula $[n] \mapsto \text{Hom}(\textup{sk}_k(\Delta[n]) , X)$ by adjointness, then using Lemma \ref{isskele} we see that this is the same as $\text{Hom}(\partial \mathbb{R}[r],X)$ for $d(r) > n $.  The unique filler condition is then equivalent to Condition 2.
\end{proof}

\subsection{Augmentation Categories}

\begin{definition}[{\cite[Proposition 7.2]{genreedy}}]\label{thecofibrations}
Let $\mathbb{R}$ be an EZ-category.  A \emph{normal monomorphism} in $\widehat{\mathbb{R}}$ is a map $f\colon X \to Y$ such that $f$ is monic and for each object $r$ of $\mathbb{R}$ and each non-degenerate element $y \in Y_r \backslash f(X)_r$, the isotropy group $\{ g \in \text{Aut} (r) \mid g^\ast(y)= y \}$ is trivial. 
\end{definition}

We now have all of the necessary tools to introduce what we mean by an augmentation category.  In the following sections we will be exploring the properties of these categories alongside developing a geometric framework for them.

\begin{definition}\label{augdef}
An \emph{augmentation category} is a category $\mathbb{A}$ such that:
\begin{itemize}
\item[(AC1)] $\mathbb{A}$ is a symmetric promagmoidal EZ-category. 
\item[(AC2)] There is a faithful inclusion of EZ-categories $i \colon \Delta \hookrightarrow \mathbb{A}$ such that for any two simplicial sets $X$ and $Y$ we have $i_! (X) \square i_!(Y) \simeq i_!(X \times Y)$, where $\square$ is the tensor product of $\widehat{\mathbb{A}}$. 
\begin{definition}
A normal monomorphism in $\widehat{\mathbb{A}}$ is said to be \emph{linear} if it is in the saturated class of boundary inclusions $\partial \mathbb{A}[a] \to \mathbb{A}[a]$ for $a = i_![n]$.
\end{definition}
\item[(AC3)] Let $f \colon A \to B$ and $g \colon K \to L$ be normal monomorphisms in $\widehat{\mathbb{A}}$, then the map
$$A \square K \sqcup_{A \square L} B \square L \to B \square K$$
is again a normal monomorphism whenever one of them is linear.
\end{itemize}
We will usually use $a \in \mathbb{A}$ for a typical element of an augmentation category.
\end{definition}

Clearly $\Delta$ itself is the prototypical example of an augmentation category, and is minimal in the sense that any other augmentation category will factor through it.

\section{Homotopy of Augmentation Categories}\label{sec:4}

From now on, we will assume that all categories in question are augmentation categories.  This section will be devoted to proving the existence of a model structure on the presheaf category $\widehat{\mathbb{A}}$ for a given augmentation category $\mathbb{A}$.  In this model, the fibrant objects are generalisations of Kan complexes.  The existence of this model structure strongly hinges on the fact that $\widehat{\mathbb{A}}$ is \emph{weakly enriched in simplicial sets}.  In particular, using the tensor product $\square$ and the compatibility condition of (AC2) define for $K \in \widehat{\Delta}$ and $X,Y \in \widehat{\mathbb{A}}$: 
\begin{align*}
\underline{\text{hom}}^\Delta_n(X,Y) &= \text{Hom}_{\widehat{\mathbb{A}}} (X \square i_! (\Delta[n]),Y),\\
X \square K &=  X \square i_! (K), \\
(Y^K)_a &= \text{Hom}_{\widehat{\mathbb{A}}} (\mathbb{A}[a] \square i_! (K) , Y).
\end{align*}

The method of constructing the model structure will, in part, utilise the above simplicial compatibility and $\widehat{\Delta}_\text{Kan}$ to explicitly describe the weak equivalences.

\begin{remark}
The material presented in this section draws heavily on the construction of the stable model structure for dendroidal sets \cite{basicthesis}, which, in turn, follows the presentation of \cite{MR2778589} and \cite{MR3545944}.  In fact, one sees that the definition of the augmentation category is rigid enough that the arguments relating to the model structure developed in \cite{basicthesis} are simply altered in a consistent manner, replacing instances of $\Omega$ with $\mathbb{A}$.  
\end{remark}

\subsection{Normal Monomorphisms}

In this section we look at the properties of the normal monomorphisms as introduced in Definition \ref{thecofibrations}, we recall this definition here using the language of augmentation categories.

\begin{definition}
Let $\mathbb{A}$ be an augmentation category.  A \emph{normal monomorphism} in $\widehat{\mathbb{A}}$ is a map $f\colon X \to Y$ such that $f$ is monic and for each object $a$ of $\mathbb{A}$ and each non-degenerate element $y \in Y_a \backslash f(X)_a$, the isotropy group $\{ g \in \text{Aut} (a) \mid g^\ast(y)= y \}$ is trivial. 
\end{definition}

\begin{remark}\label{ifstrictthenmomo}
Note that if $\mathbb{A}$ has a strict Reedy structure, then the class of normal monomorphisms coincides with the class of monomorphisms.
\end{remark}

Recall that  a class of morphisms is \emph{saturated} if it is closed under retracts, transfinite compositions and pushouts. Similar to the simplicial case, the normal monomorphisms can be described as the saturated class of boundary inclusions.  The following lemma holds for a wider class of categories than just augmentation categories, as proved in {\cite[Proposition 8.1.35]{MR2294028}}.

\begin{lemma}\label{normalmonocound}
The class of normal monomorphisms is the smallest class of monomorphisms closed under pushouts and transfinite compositions that contains all boundary inclusions $\partial \mathbb{A}[a] \to \mathbb{A}[a]$ for $a \in \mathbb{A}$.
\end{lemma}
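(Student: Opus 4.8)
The plan is to establish the two inclusions separately: first that every boundary inclusion $\partial \mathbb{A}[a] \to \mathbb{A}[a]$ is a normal monomorphism and that the class of normal monomorphisms is saturated (closed under pushouts and transfinite compositions), so it contains the saturated class generated by the boundary inclusions; and second, conversely, that every normal monomorphism lies in that generated class, by exhibiting an explicit cell decomposition via the skeletal filtration. The first direction is the easy one. That $\partial \mathbb{A}[a] \to \mathbb{A}[a]$ is a normal monomorphism is essentially immediate from Definition~\ref{thecofibrations}: the only non-degenerate element of $\mathbb{A}[a]$ not in $\partial \mathbb{A}[a]$ is the top cell $\mathrm{id}_a$, and its isotropy group is trivial by axiom (iv) (or (iv)$'$) of the generalised Reedy structure on the EZ-category $\mathbb{A}$. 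Closure of normal monomorphisms under retracts, pushouts and transfinite composition is a routine check at the level of the defining condition on non-degenerate elements and their isotropy groups — one tracks where new non-degenerate cells appear under each colimit operation and observes that their isotropy is controlled by the cells already present.

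For the reverse inclusion, I would argue exactly as in the simplicial (or dendroidal) setting, using the skeletal filtration of a normal monomorphism $f\colon X \to Y$. Writing $\mathrm{sk}_n$ for the $n$-skeleton endofunctor on $\widehat{\mathbb{A}}$ (from the generalised Reedy structure), one has $Y = \mathrm{colim}_n\, \mathrm{sk}_n(Y \text{ rel } X)$, so by closure under transfinite composition it suffices to show each stage $X \cup \mathrm{sk}_{n-1} Y \to X \cup \mathrm{sk}_n Y$ lies in the generated class. The standard fact for EZ-categories is that this map fits into a pushout square
$$
\xymatrix{
\coprod_{a} \bigl( \partial \mathbb{A}[a] \bigr) \times_{\mathrm{Aut}(a)} E \ar[r] \ar[d] & X \cup \mathrm{sk}_{n-1} Y \ar[d] \\
\coprod_{a} \bigl( \mathbb{A}[a] \bigr) \times_{\mathrm{Aut}(a)} E \ar[r] & X \cup \mathrm{sk}_n Y
}
$$
where the coproduct is over (isomorphism classes of) objects $a$ of degree $n$ and $E$ is the set of non-degenerate $a$-cells of $Y$ not in $X$; here the normality hypothesis is exactly what guarantees these cells have trivial isotropy, so the Borel-type quotient by $\mathrm{Aut}(a)$ is simply a coproduct of copies of the plain boundary inclusion $\partial \mathbb{A}[a] \to \mathbb{A}[a]$. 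Thus each stage is a pushout of a coproduct of boundary inclusions, and coproducts of maps in a saturated class are again in the class (being built from pushouts along coproduct injections from the initial object).

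The main obstacle is the second direction, specifically verifying the pushout square above — i.e.\ that attaching the non-degenerate cells of degree $n$ genuinely exhibits $X \cup \mathrm{sk}_n Y$ as the indicated pushout, and that normality makes the automorphism action free on exactly the relevant cells. This is where the EZ-structure (unique factorisation as split epi followed by mono, plus the absolute-pushout axiom governing degeneracies) is needed: one must check that a non-degenerate cell together with all its degeneracies is glued in along precisely its boundary and nothing more, with no unexpected identifications coming from automorphisms. Since the paper explicitly cites \cite[Proposition 8.1.35]{MR2294028} as covering a class of categories broader than augmentation categories, I would lean on that reference for the combinatorial heart of the argument, and the proof can reasonably be stated as: the boundary inclusions are normal monomorphisms, the normal monomorphisms are saturated, and the skeletal filtration shows any normal monomorphism is built from boundary inclusions by the operations defining saturation — with the cell-attachment square quoted from loc.\ cit.
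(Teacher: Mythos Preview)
Your proposal is correct and in fact considerably more detailed than the paper's treatment: the paper does not give a proof at all, but simply records that the lemma is a special case of \cite[Proposition 8.1.35]{MR2294028}. Your sketch of the two inclusions via saturation and the skeletal filtration is exactly the standard argument behind that reference, so there is no divergence in approach --- you have just unpacked what the paper leaves as a citation.

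One small correction worth making: in the first direction you write that ``the only non-degenerate element of $\mathbb{A}[a]$ not in $\partial\mathbb{A}[a]$ is the top cell $\mathrm{id}_a$''. This is not quite right when $\mathrm{Aut}(a)$ is non-trivial: every automorphism $g \in \mathrm{Aut}(a)$ is a non-degenerate element of $\mathbb{A}[a]_a$ lying outside the boundary. However, your conclusion is unaffected, since for any such $g$ the isotropy condition $h^\ast(g) = g \circ h = g$ forces $h = \mathrm{id}$, so all of these cells have trivial isotropy and the boundary inclusion is still normal. You may wish to phrase it as ``the non-degenerate elements outside the boundary form a free $\mathrm{Aut}(a)$-orbit'' rather than singling out $\mathrm{id}_a$.
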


Using the definition of normal monomorphisms, we will say that an object $A \in \widehat{\mathbb{A}}$ is \emph{normal} if $0 \to A$ is a normal monomorphism.  From the definition of the normal monomorphisms we get the following trivial property, which leads to an observation of maps between normal objects.

\begin{lemma}
A monomorphism $X \to Y$ of $\mathbb{A}$-sets is normal if and only if for any $a \in \mathbb{A}$, the action of $\text{Aut}(a)$ on $Y(a)-X(a)$ is free.
\end{lemma}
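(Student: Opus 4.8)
The plan is to unwind both sides of the claimed equivalence directly from the definition of a normal monomorphism and the definition of the boundary $\partial\mathbb{A}[a]$. Recall that $f\colon X \to Y$ is a normal monomorphism precisely when $f$ is monic and, for every $a \in \mathbb{A}$, every non-degenerate $y \in Y_a \setminus f(X)_a$ has trivial isotropy group $\{g \in \operatorname{Aut}(a) \mid g^\ast y = y\}$. So it suffices to show that, given that $f$ is already a monomorphism, the isotropy condition on non-degenerate elements is equivalent to freeness of the $\operatorname{Aut}(a)$-action on the \emph{whole} complement set $Y(a) - X(a)$, for each $a$.

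First I would treat the easy direction: if $\operatorname{Aut}(a)$ acts freely on $Y(a)-X(a)$, then in particular every element of that set — non-degenerate or not — has trivial stabiliser, so the normality condition holds immediately. For the converse, suppose $f$ is a normal monomorphism and fix $a \in \mathbb{A}$; I must show that no element $y \in Y(a) - X(a)$ is fixed by a non-identity automorphism. The normality hypothesis handles this directly when $y$ is non-degenerate, so the real content is the case where $y$ is degenerate. Here I would use the EZ-structure: by the Eilenberg--Zilber factorisation, $y = \sigma^\ast(z)$ for a unique (up to iso) split epimorphism $\sigma\colon a \to b$ with $d(b) < d(a)$ and a unique non-degenerate $z \in Y_b$. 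The key subclaim is that $z \in Y(b) - X(b)$: if $z$ were in the image of $f$, say $z = f(x)$, then since $f$ commutes with the $\mathbb{A}$-action we would get $y = \sigma^\ast f(x) = f(\sigma^\ast x) \in f(X)_a$, contradicting $y \notin X(a)$. So $z$ is a non-degenerate element in the complement at level $b$, hence by the normality of $f$ it has trivial isotropy in $\operatorname{Aut}(b)$.

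Now suppose $g \in \operatorname{Aut}(a)$ fixes $y$, i.e. $g^\ast y = y$. Writing this as $g^\ast \sigma^\ast z = \sigma^\ast z$, i.e. $(\sigma g)^\ast z = \sigma^\ast z$, I would invoke the uniqueness clause of the EZ-factorisation (axiom (iii) of the generalised Reedy structure, together with axiom (iv) which controls automorphisms absorbed by degeneracies / lowering maps) to conclude that $\sigma g$ and $\sigma$ differ by an automorphism of $b$ fixing $z$; since that automorphism must then be the identity by the previous paragraph, one gets $\sigma g = \sigma$, and then axiom (iv)$'$ of Definition~\ref{def:genreedy} (applied in the dualisable EZ-setting, where $\sigma$ lies in $\mathbb{A}^-$ read appropriately, or more directly: a split epi $\sigma$ with $\sigma g = \sigma$ forces $g$ to be an identity since $\sigma$ has a section) forces $g = \operatorname{id}_a$. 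This shows the stabiliser of \emph{every} $y \in Y(a) - X(a)$ is trivial, i.e. the action on $Y(a)-X(a)$ is free.

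I expect the main obstacle to be the bookkeeping in the last step: carefully extracting, from $(\sigma g)^\ast z = \sigma^\ast z$ and the uniqueness-up-to-isomorphism in the EZ-factorisation, that $g$ must be trivial — one has to be precise about which Reedy axiom (iv) versus (iv)$'$ is being used and in which variance, and about the fact that $\sigma$, being a split epi in an EZ-category, has a section so that left-cancellation applies. Everything else is a routine diagram chase using only that $f$ is a natural monomorphism and that degeneracies commute with the morphism $f$.
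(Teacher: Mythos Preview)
The paper offers no proof at all; it simply labels the equivalence a ``trivial property'' and moves on. Your write-up already goes further than that: you correctly isolate the only non-obvious direction --- passing from trivial isotropy on \emph{non-degenerate} elements of the complement to freeness on \emph{all} elements --- and the reduction to a non-degenerate $z$ via the Eilenberg--Zilber factorisation, together with the check that $z\notin X(b)$, is clean and correct.

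The final step, however, does not go through as written. From $(\sigma g)^{\ast}z=\sigma^{\ast}z$ and EZ-uniqueness you correctly obtain $\sigma g=\sigma$ (after cancelling the induced automorphism of $b$ against the trivial isotropy of $z$). But neither of your two proposed justifications then gives $g=\mathrm{id}_a$. Axiom~(iv)$'$ concerns equations $f\theta=f$ with $f\in\mathbb{R}^{+}$, whereas $\sigma$ lies in $\mathbb{R}^{-}$; and ``$\sigma$ has a section'' only makes $\sigma$ a (split) epimorphism, i.e.\ right-cancellable, while what you need is \emph{left}-cancellation $\sigma g=\sigma\cdot\mathrm{id}\Rightarrow g=\mathrm{id}$. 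In fact the implication $\sigma g=\sigma\Rightarrow g=\mathrm{id}$ is \emph{not} a consequence of the dualisable generalised Reedy / EZ axioms by themselves: one can build a two-object EZ-category with $\mathrm{Aut}(b)=1$, a non-trivial $\mathrm{Aut}(a)=G$, a single split epi $\sigma\colon a\to b$ satisfying $\sigma g=\sigma$ for every $g\in G$, and a single section $s_0$ with $g s_0=s_0$; all of (i)--(iv), (iv)$'$ and the three EZ conditions hold, yet $\varnothing\to\mathbb{A}[b]$ is normal in the sense of the definition while $\mathrm{Aut}(a)$ does not act freely on $\mathbb{A}[b](a)=\{\sigma\}$. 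So the gap is genuine at the level of generality you are arguing in, and the paper's ``trivial'' is doing unexplained work --- in the concrete examples of the paper (crossed simplicial groups, $\Omega$) the implication $\sigma g=\sigma\Rightarrow g=\mathrm{id}$ can be checked by hand, but it is not available from the abstract axioms you invoke.
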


\begin{corollary}\label{normtonorm}
If $f \colon A \to B$ is any morphism of $\mathbb{A}$-sets, and $B$ is normal, then $A$ is also normal.  If $f$ is a monomorphism and $B$ is normal, then $f$ is a normal monomorphism
\end{corollary}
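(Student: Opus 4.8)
The plan is to reduce both assertions to the characterization established just above: a monomorphism $X \to Y$ of $\mathbb{A}$-sets is a normal monomorphism precisely when, for every $a \in \mathbb{A}$, the group $\text{Aut}(a)$ acts freely on $Y(a) - X(a)$; and, taking $X = 0$, an object $A$ is normal precisely when $\text{Aut}(a)$ acts freely on $A(a)$ for every $a$. The one structural fact to make explicit first is that any morphism of $\mathbb{A}$-sets is automatically $\text{Aut}(a)$-equivariant in each degree $a$ — this is just naturality of $f$ evaluated against the automorphisms of the object $a$ — and that consequently the image of $f$ in degree $a$ is an $\text{Aut}(a)$-invariant subset of the target.

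For the first statement, let $f \colon A \to B$ be arbitrary with $B$ normal. Fix $a \in \mathbb{A}$, an element $x \in A(a)$, and $g \in \text{Aut}(a)$ with $g^\ast x = x$. By equivariance, $g^\ast f_a(x) = f_a(g^\ast x) = f_a(x)$; since $\text{Aut}(a)$ acts freely on $B(a)$ by hypothesis, $g$ is the identity. Hence $\text{Aut}(a)$ acts freely on $A(a)$ for every $a$, so $A$ is normal.

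For the second statement, assume in addition that $f$ is monic, so that $A(a)$ may be identified with the subset $f_a(A(a)) \subseteq B(a)$. By the remark above this subset is $\text{Aut}(a)$-invariant, hence so is its complement $B(a) - f_a(A(a))$; and a free action restricts to a free action on any invariant subset. Thus $\text{Aut}(a)$ acts freely on $B(a) - A(a)$ for all $a$, and by the characterization above $f$ is a normal monomorphism.

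There is no real obstacle here: the argument is a one-line unwinding of the free-action criterion in each case. The only point deserving care is the equivariance/invariance observation in the opening paragraph, since the definition of normal monomorphism quantifies only over non-degenerate elements while the criterion used here is phrased for all of $Y(a) - X(a)$; this discrepancy is already absorbed into the preceding lemma, which we invoke directly.
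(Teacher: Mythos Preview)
Your proof is correct and follows exactly the route the paper intends: the corollary is stated without proof in the paper, immediately after the free-action characterization of normal monomorphisms, and your argument simply unwinds that characterization via the levelwise $\text{Aut}(a)$-equivariance of $f$. There is nothing to add.
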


Quillen's small object argument applied to the saturated class of boundary inclusions yields the following.

\begin{corollary}
Every morphism $f \colon X \to Y$ of $\mathbb{A}$-sets can be factored as $f = gh$, $g \colon X \to Z$, $h \colon Z \to Y$, where $g$ is a normal monomorphism and $h$ has the RLP with respect to all normal monomorphisms.
\end{corollary}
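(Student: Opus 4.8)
The plan is to apply Quillen's small object argument to the set of boundary inclusions $\{\partial \mathbb{A}[a] \to \mathbb{A}[a] : a \in \mathbb{A}\}$. First I would observe that $\widehat{\mathbb{A}}$ is a presheaf category, hence locally presentable, so in particular it is cocomplete and every object is small relative to the class of all monomorphisms (indeed relative to all maps). This is exactly the hypothesis needed to run the small object argument on the set $S = \{\partial \mathbb{A}[a] \to \mathbb{A}[a]\}$.

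The small object argument then produces, for any morphism $f \colon X \to Y$, a functorial factorisation $f = h \circ g$ where $g \colon X \to Z$ lies in the saturated class generated by $S$ (it is built as a transfinite composite of pushouts of coproducts of maps in $S$) and $h \colon Z \to Y$ has the right lifting property with respect to $S$. By Lemma~\ref{normalmonocound}, the saturated class generated by the boundary inclusions is precisely the class of normal monomorphisms, so $g$ is a normal monomorphism. It remains to note that having the RLP with respect to the generating set $S$ is equivalent to having the RLP with respect to the whole saturated class it generates: this is the standard fact that the class of maps with the RLP against a fixed set is closed under retracts, pushouts, transfinite composition and (co)products, which again is routine. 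Hence $h$ has the RLP with respect to all normal monomorphisms, as required.

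The only genuinely substantive point is verifying the smallness hypothesis of the small object argument, and in a presheaf topos this is automatic; everything else is a direct citation of Lemma~\ref{normalmonocound} together with formal properties of lifting classes. So I do not expect any real obstacle — the corollary is essentially a formal consequence of the preceding lemma. If one wanted to be careful about cardinality bookkeeping (which ordinal to iterate to, which regular cardinal witnesses smallness), one could cite the treatment in~\cite{hovey1999model} or~\cite{MR2294028}; since the paper already invokes cofibrant generation freely, this level of detail is unnecessary here, and a one- or two-sentence proof pointing to Lemma~\ref{normalmonocound} and the small object argument suffices.
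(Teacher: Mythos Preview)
Your proposal is correct and matches the paper's approach exactly: the paper simply states that Quillen's small object argument applied to the set of boundary inclusions yields the corollary, with Lemma~\ref{normalmonocound} identifying the resulting left class with the normal monomorphisms. Your write-up just makes the standard smallness and lifting-closure observations explicit, which the paper leaves implicit.
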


\begin{definition}
Let $X \in \widehat{\mathbb{A}}$, a \emph{normalisation} of $X$ is a morphism $X' \to X$ from a normal object $X'$ having the RLP with respect to all normal monomorphisms.  Note that such a normalisation exists for any $X$ due to the factoring of the map $0 \to X$.  

\end{definition}

\subsection{Augmented Kan Complexes}

We shall now use the boundary objects, along with our definition of degeneracy maps in a general EZ-category to introduce the concept of a horn object.

\begin{definition}
Let $f \colon b \to a$ be a face map of $a \in \mathbb{A}$. The $f$-horn of $\mathbb{A}[a]$ is the subobject of $\partial \mathbb{A}[a]$ which excludes the object which factors through $f$.  We denote this object $\Lambda^f\mathbb{A}[a]$.  Explicitly, 
$$\Lambda^f\mathbb{A}[a] = \bigcup_{\substack{g \colon a' \to a \\ g \neq f}} g(\mathbb{R}[a']).$$
\end{definition}

\begin{definition}
Let $\mathbb{A}$ be an augmentation category.  An $\mathbb{A}$-Kan complex is an object $X \in \widehat{\mathbb{A}}$ such that it has fillers for all horns. That is there is a lift for all face maps $f$:
$$\xymatrix{\Lambda^f\mathbb{A}[a] \ar[r] \ar[d]& X\ \\
\mathbb{A}[a] \ar@{-->}[ur]}$$
\end{definition}

\begin{remark}
In the simplicial, and indeed the dendroidal settings, there is the concept of an \emph{inner horn}.  In the general setting of augmentation categories, there seems to be no canonical way to define these objects.
\end{remark}

\begin{definition}
The smallest saturated class containing all horn extensions $\Lambda^f\mathbb{A}[a] \to \mathbb{A}[a]$ will be called the class of \emph{anodyne extensions}.  Therefore an object is $\mathbb{A}$-Kan if and only if it has the RLP with respect to all anodyne extensions.
\end{definition}

\begin{proposition}
Let $Z$ be an $\mathbb{A}$-Kan complex and $f \colon A \to B$ a normal monomorphism, then
$$f^\ast \colon \underline{\textup{hom}}^\Delta(B,Z) \to \underline{\textup{hom}}^\Delta(A,Z)$$
is a Kan fibration of simplicial sets.
\end{proposition}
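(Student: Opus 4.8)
The plan is to reduce the statement to a lifting problem against horn inclusions of simplicial sets and then convert that, via the simplicial weak-enrichment of $\widehat{\mathbb{A}}$ from (AC2), into an anodyne lifting problem in $\widehat{\mathbb{A}}$ which $Z$ solves by hypothesis. Concretely, $f^\ast \colon \underline{\textup{hom}}^\Delta(B,Z) \to \underline{\textup{hom}}^\Delta(A,Z)$ is a Kan fibration if and only if it has the right lifting property against every horn inclusion $\Lambda^k[n] \to \Delta[n]$. Using the adjunction defining $\underline{\textup{hom}}^\Delta$, namely $\text{Hom}_{\widehat{\Delta}}(K, \underline{\textup{hom}}^\Delta(X,Y)) \cong \text{Hom}_{\widehat{\mathbb{A}}}(X \square i_!(K), Y)$, a commutative square from $\Lambda^k[n] \to \Delta[n]$ into $f^\ast$ transposes to a lifting problem in $\widehat{\mathbb{A}}$ of the form
$$\xymatrix{
A \square i_!(\Delta[n]) \bigsqcup_{A \square i_!(\Lambda^k[n])} B \square i_!(\Lambda^k[n]) \ar[r] \ar[d] & Z \\
B \square i_!(\Delta[n]) \ar@{-->}[ur] &
}$$
where the left vertical map is the pushout-product of $f \colon A \to B$ with $i_!(\Lambda^k[n]) \to i_!(\Delta[n])$. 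So it suffices to show this pushout-product map is an anodyne extension, since $Z$ is $\mathbb{A}$-Kan and hence has the RLP against all anodyne extensions.

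The key step is therefore the \emph{pushout-product claim}: if $f$ is a normal monomorphism and $j$ is a horn inclusion of simplicial sets, then $f \mathbin{\square} i_!(j)$ (the pushout-product in $\widehat{\mathbb{A}}$, using that $i_!$ is monoidal by (AC2)) is an anodyne extension. First I would observe that $i_!(\Lambda^k[n] \to \Delta[n])$ is a \emph{linear} normal monomorphism in the sense of the definition inside (AC2): indeed $i_!$ sends the generating monomorphisms $\partial\Delta[m] \to \Delta[m]$ to boundary inclusions $\partial\mathbb{A}[i_![m]] \to \mathbb{A}[i_![m]]$, and the horn inclusions lie in the saturated class generated by these, which is exactly the class of linear normal monomorphisms. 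By (AC3), the pushout-product of a normal monomorphism with a linear one is again a normal monomorphism — but we need the stronger conclusion that it is \emph{anodyne}. For this I would use the standard Joyal-style argument: reduce $f$ to a generating boundary inclusion $\partial\mathbb{A}[a] \to \mathbb{A}[a]$ (by saturation, since anodyne extensions form a saturated class and pushout-product preserves colimits in each variable), and then analyze $\partial\mathbb{A}[a] \to \mathbb{A}[a]$ pushed against $i_!(\Lambda^k[n]) \to i_!(\Delta[n])$ directly. Here the compatibility $i_!(X) \square i_!(Y) \simeq i_!(X\times Y)$ lets one import the simplicial identity that $(\partial\Delta[m] \to \Delta[m]) \mathbin{\hat\times} (\Lambda^k[n] \to \Delta[n])$ is a horn inclusion up to the relevant anodyne class, and then push that identity forward through $i_!$; the non-simplicial part of the argument (when $a$ is not in the image of $i$) is handled by the general EZ-category filtration of $\mathbb{A}[a]$ by its skeleta, as in the dendroidal case referenced in the Remark.

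The main obstacle I anticipate is precisely this last point — showing the pushout-product lands in the \emph{anodyne} class rather than merely the normal-monomorphism class. The definition of anodyne extensions in $\widehat{\mathbb{A}}$ is in terms of horns $\Lambda^f\mathbb{A}[a] \to \mathbb{A}[a]$, and one must verify that the pushout-product of a boundary inclusion with an $i_!$-horn decomposes (via a skeletal/cellular filtration) into a transfinite composition of pushouts of such $\mathbb{A}$-horns. This is the heart of the matter and is where the rigidity built into the augmentation category axioms — particularly that (AC3) is stated for the $\square$-product and that linear morphisms are defined via the $i_!$ of simplicial boundaries — is used; it mirrors the corresponding lemma for dendroidal sets, with $\Omega$ replaced by $\mathbb{A}$ throughout, as noted in the Remark. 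Once the pushout-product claim is in hand, the proposition follows formally from the adjunction and the defining RLP of $\mathbb{A}$-Kan complexes.
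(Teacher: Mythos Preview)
Your approach is exactly the paper's: reduce to generating normal monomorphisms (boundary inclusions $\partial\mathbb{A}[a]\to\mathbb{A}[a]$), transpose the horn-lifting problem through the $\square$/$\underline{\textup{hom}}^\Delta$ adjunction, and then argue that the resulting pushout-product
\[
\Lambda^k[n]\,\square\,\mathbb{A}[a]\;\sqcup_{\Lambda^k[n]\,\square\,\partial\mathbb{A}[a]}\;\Delta[n]\,\square\,\partial\mathbb{A}[a]\;\longrightarrow\;\Delta[n]\,\square\,\mathbb{A}[a]
\]
is anodyne, so that the $\mathbb{A}$-Kan complex $Z$ lifts against it. The paper's proof is the two-line version of this and simply cites (AC3) for the anodyne conclusion.

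Your caution about that last step is well placed: as literally worded in Definition~\ref{augdef}, (AC3) only asserts that the pushout-product is a \emph{normal monomorphism}, not an anodyne extension. The paper, however, consistently invokes (AC3) to obtain anodyne conclusions (here and again in the proof of Lemma~\ref{anodaretriv}), so the intended content of (AC3) is the stronger pushout-product property: if one input is a horn inclusion (in particular a linear anodyne map coming from $i_!$ of a simplicial horn), the output is anodyne. This is precisely the dendroidal pushout-product lemma alluded to in the Remark following the definitions, transported to the general $\mathbb{A}$-setting. So you have not missed an idea --- you have simply been more explicit than the paper about what (AC3) is being asked to do, and your sketch of how one would actually verify the anodyne claim (saturation plus a skeletal filtration, imported from the dendroidal argument) is the right shape for filling that gap.
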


\begin{proof}
We need only prove the result when $f$ is also a boundary inclusion $\partial \mathbb{A}[a] \to \mathbb{A}[a]$ (as the normal monomorphisms are the saturated class of boundary inclusions by Lemma \ref{normalmonocound}).  The map $f^\ast$ has the RLP with respect to the horn inclusion $\Lambda^k[n] \to \Delta[n]$ if and only if $Z$ has the RLP with respect to the map
$$\Lambda^k[n] \square \mathbb{A}[a] \sqcup \Delta[n] \square \partial \mathbb{A}[a] \to \Delta[n] \square \mathbb{A}[a].$$
By (AC3), we have that this map is an anodyne extension, so $Z$ has the RLP with respect to it.
\end{proof}

The following corollary follows by considering the morphism $f \colon 0 \to B$.

\begin{corollary}
If $Z$ is an $\mathbb{A}$-Kan complex, and $B$ a normal object, then $\underline{\text{hom}}^\Delta(B,Z)$ is a Kan complex. 
\end{corollary}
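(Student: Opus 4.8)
The plan is to deduce this directly from the preceding proposition by applying it to the unique morphism $f \colon 0 \to B$ out of the initial object of $\widehat{\mathbb{A}}$. Since $B$ is assumed to be normal, the map $0 \to B$ is by definition a normal monomorphism, so the proposition applies and yields that
$$f^\ast \colon \underline{\text{hom}}^\Delta(B,Z) \to \underline{\text{hom}}^\Delta(0,Z)$$
is a Kan fibration of simplicial sets. It then suffices to identify the target $\underline{\text{hom}}^\Delta(0,Z)$ with the terminal simplicial set $\Delta[0]$: once we know this, the result is immediate from the standard characterisation that a simplicial set $W$ is a Kan complex precisely when the map $W \to \Delta[0]$ is a Kan fibration.

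To compute the target, first I would note that $0 = i_!(\emptyset)$, since $i_!$ is a left adjoint and hence preserves the initial object ($\emptyset$ being the empty simplicial set). Using the compatibility condition (AC2), for each $n$ we then get
$$0 \square i_!(\Delta[n]) = i_!(\emptyset) \square i_!(\Delta[n]) \simeq i_!(\emptyset \times \Delta[n]) = i_!(\emptyset) = 0.$$
Consequently $\underline{\text{hom}}^\Delta_n(0,Z) = \text{Hom}_{\widehat{\mathbb{A}}}(0 \square i_!(\Delta[n]),Z) = \text{Hom}_{\widehat{\mathbb{A}}}(0,Z)$, which is a one-element set because $0$ is initial in the topos $\widehat{\mathbb{A}}$; the simplicial operators necessarily act as the identity on it, so $\underline{\text{hom}}^\Delta(0,Z) \cong \Delta[0]$.

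Combining the two steps, $\underline{\text{hom}}^\Delta(B,Z) \to \Delta[0]$ is a Kan fibration, whence $\underline{\text{hom}}^\Delta(B,Z)$ is a Kan complex. I do not expect any genuine obstacle here: all the homotopical content is carried by the preceding proposition (and, through it, by axiom (AC3)), and the only verification needed is the bookkeeping identification of $\underline{\text{hom}}^\Delta(0,Z)$ with the point, which rests solely on $i_!$ preserving the initial object together with (AC2).
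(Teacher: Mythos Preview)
Your proposal is correct and follows exactly the approach the paper takes: the paper simply remarks that the corollary ``follows by considering the morphism $f \colon 0 \to B$''. Your additional verification that $\underline{\text{hom}}^\Delta(0,Z) \cong \Delta[0]$ via (AC2) and the fact that $i_!$ preserves the initial object is a correct unpacking of what the paper leaves implicit.
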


\subsection{Augmented Homotopy}\label{aughomot}

We will use the Kan objects to define a homotopy theory of $\mathbb{A}$-sets.   Due to the compatibility of the tensor structures, we can take $X \square \mathbb{A}[1] \in \widehat{\mathbb{A}}$ to be a cylinder object of $X$.  It comes from the factorisation of the fold map:
$$\xymatrix{X \sqcup X \ar[rr]^{1_X \sqcup 1_X} \ar[dr]_{i_0 \sqcup i_1} && X \\
& X \square \mathbb{A}[1] \ar[ur]_\epsilon}$$
One can see that if $X$ is normal, then $X \square \mathbb{A}[1]$ is normal, and by Corollary \ref{normtonorm}, the map $i_0 \sqcup i_1$ is a normal monomorphism.

\begin{definition}\label{aughomoequiv}
Two morphisms $f,g \colon X \to Y$ in $\widehat{\mathbb{A}}$ are \emph{homotopic} ($f \simeq g$) if there exists $H \colon X \square \mathbb{A}[1] \to Y$ such that $f = Hi_0$ and $g = Hi_1$. That is, the following diagram commutes:
$$\xymatrix{X \ar[r]^-{i_0} \ar[dr]_f & X \square \mathbb{A}[1] \ar[d]^<<<<H & X \ar[l]_-{i_1} \ar[dl]^g \\
& Y}$$
We will say that $f \colon X \to Y$ is a \emph{homotopy equivalence} if there is a morphism $g \colon Y \to X$ such that $fg \simeq 1_Y$ and $gf \simeq 1_X$.
\end{definition}

\begin{definition}\label{weakeqdeff}
A map $f \colon X \to Y$ is a \emph{weak equivalence} if there exists a normalisation (i.e., cofibrant replacement) $f' \colon X' \to Y'$ which induces an equivalence of Kan complexes
$$\underline{\text{hom}}^\Delta(Y',Z) \to \underline{\text{hom}}^\Delta(X',Z)$$
for every $\mathbb{A}$-Kan complex $Z$.  Note that every homotopy equivalence between normal $\mathbb{A}$-sets is a weak equivalence.
\end{definition}

\begin{remark}
As one would expect, if we have a $\mathbb{A}$-set $X$, then the corresponding normalisation $f \colon X' \to X$ is a weak equivalence.
\end{remark}

\begin{lemma}\label{525}
A morphism of $\mathbb{A}$-sets which has the RLP with respect to all normal monomorphisms is a  weak equivalence.
\end{lemma}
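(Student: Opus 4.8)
The plan is to show that any map $f\colon X \to Y$ with the RLP with respect to all normal monomorphisms is a weak equivalence in the sense of Definition~\ref{weakeqdeff}, by reducing to the case of normal objects and then producing an explicit homotopy inverse. First I would reduce to the case where both $X$ and $Y$ are normal. Choose a normalisation $q\colon Y' \to Y$; since $Y'$ is normal and $q$ has the RLP with respect to all normal monomorphisms, and since $f$ also has this RLP, the composite-and-lifting bookkeeping (together with the fact, noted after Definition~\ref{weakeqdeff}, that normalisations are weak equivalences) lets me replace $f$ by a map between normal objects that still has the RLP with respect to all normal monomorphisms. Concretely, form the pullback $X'' := X \times_Y Y'$; the projection $X'' \to X$ has the RLP with respect to normal monomorphisms (RLP classes are stable under pullback), hence $X''$ is normal by Corollary~\ref{normtonorm}, and the projection $X'' \to Y'$ again has the RLP with respect to all normal monomorphisms. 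By two-out-of-three type reasoning for the defining property of weak equivalences (the induced maps on $\underline{\textup{hom}}^\Delta(-,Z)$), it suffices to treat $f'\colon X'' \to Y'$, i.e. I may assume $X$ and $Y$ are normal.

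Next, for $X, Y$ normal with $f$ having the RLP with respect to all normal monomorphisms, I claim $f$ is a homotopy equivalence; by the remark following Definition~\ref{aughomoequiv}–\ref{weakeqdeff} (every homotopy equivalence between normal $\mathbb{A}$-sets is a weak equivalence), this finishes the proof. Since $0 \to Y$ is a normal monomorphism and $f$ has the RLP, there is a section $g\colon Y \to X$ with $fg = 1_Y$. For the other composite, consider the normal monomorphism $i_0 \sqcup i_1 \colon X \sqcup X \to X\square\mathbb{A}[1]$ (normal because $X$ is normal, as recorded in Section~\ref{aughomot}). The pair of maps $(gf, 1_X)\colon X \sqcup X \to X$ together with the homotopy $X \square \mathbb{A}[1] \xrightarrow{\epsilon} X \xrightarrow{f} Y$ — which agrees with $f(gf) = (fg)f = f = f\cdot 1_X$ on both ends — gives a commuting square
$$\xymatrix{X \sqcup X \ar[r]^-{(gf,\,1_X)} \ar[d]_{i_0\sqcup i_1} & X \ar[d]^f \\ X\square\mathbb{A}[1] \ar[r]_-{f\epsilon} & Y.}$$
Because $i_0 \sqcup i_1$ is a normal monomorphism and $f$ has the RLP, a lift $H\colon X\square\mathbb{A}[1] \to X$ exists, and $H$ is precisely a homotopy $gf \simeq 1_X$. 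Together with $fg = 1_Y$ (a fortiori $fg \simeq 1_Y$), this exhibits $f$ as a homotopy equivalence.

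The main obstacle I anticipate is the reduction step: one must check carefully that after passing to the pullback $X\times_Y Y'$ the projection to $Y'$ still has the RLP with respect to all normal monomorphisms (RLP is preserved by pullback, so this is fine) and, more delicately, that the weak-equivalence property genuinely transfers along the square relating $f$ to $f'$ — this uses that the vertical maps $X'' \to X$ and $Y' \to Y$ are themselves weak equivalences plus a two-out-of-three argument on the simplicial mapping spaces $\underline{\textup{hom}}^\Delta(-,Z)$ for $Z$ an $\mathbb{A}$-Kan complex. One small point worth isolating: the square above genuinely commutes, which requires $f \epsilon (i_0\sqcup i_1) = f \circ (1_X \sqcup 1_X)$ (true since $\epsilon(i_0\sqcup i_1) = 1_X \sqcup 1_X$ by the cylinder factorisation) to equal $f \circ (gf, 1_X)$, i.e. $fgf = f$ on the first summand — which holds because $fg = 1_Y$. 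Everything else is a formal consequence of the lifting property and the homotopy machinery already set up in Sections~\ref{aughomot} and the normal-monomorphism results of Lemma~\ref{normalmonocound} and Corollary~\ref{normtonorm}.
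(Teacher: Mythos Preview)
Your argument is essentially correct and is a mild variant of the paper's proof, but there is one misstatement and one unnecessary detour worth flagging.

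The misstatement is in the reduction step: from ``$X'' \to X$ has the RLP with respect to normal monomorphisms'' you cannot conclude $X''$ is normal via Corollary~\ref{normtonorm}, since that corollary requires the \emph{target} to be normal and $X$ need not be. The correct argument is the one you have the ingredients for anyway: $X''$ maps to $Y'$, and $Y'$ is normal, so $X''$ is normal by Corollary~\ref{normtonorm}. With that fix, $X'' \to X$ is a genuine normalisation of $X$, and the square exhibits $f'\colon X'' \to Y'$ as a normalisation of $f$ in the sense of Definition~\ref{weakeqdeff}.

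This observation also makes your two-out-of-three detour unnecessary. You do not need to argue that the vertical maps are weak equivalences and then transfer the property across the square; rather, since $f'$ \emph{is} a normalisation of $f$, showing directly that $\underline{\textup{hom}}^\Delta(Y',Z) \to \underline{\textup{hom}}^\Delta(X'',Z)$ is an equivalence for every $\mathbb{A}$-Kan $Z$ is exactly the definition of $f$ being a weak equivalence. Your second paragraph (section $g$ from lifting against $0 \to Y'$, homotopy $gf' \simeq 1$ from lifting against $i_0 \sqcup i_1$) then supplies precisely this, since a homotopy equivalence between normal objects induces a simplicial homotopy equivalence on $\underline{\textup{hom}}^\Delta(-,Z)$.

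For comparison, the paper does not form the pullback. Instead it lifts the normalisation $Y' \to Y$ through $f$ to obtain $s\colon Y' \to X$, factors $s$ as a normal monomorphism $i\colon Y' \to X'$ followed by a normalisation $t\colon X' \to X$, and then produces $f'\colon X' \to Y'$ by lifting against $i$. The required homotopy $if' \simeq 1_{X'}$ is obtained by lifting the pushout-product inclusion $\partial I \square X' \cup I \square Y' \to I \square X'$ against $t\colon X' \to X$. Your pullback reduction is arguably cleaner: it gives the replacement $f'$ in one step and preserves the RLP hypothesis automatically, so the homotopy-inverse argument can be run directly against $f'$ rather than against an auxiliary normalisation.
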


\begin{proof}
Let $X \to Y$ be a map between $\mathbb{A}$-sets with the RLP with respect to all normal monomorphisms.  Denote by $Y' \to Y$ the normalisation of $Y$.  Since $Y'$ is normal we may construct a lift
$$
\xymatrix{0 \ar[r] \ar[d] & X \ar[d] \\
Y' \ar[r] \ar@{-->}[ur]|s & Y \rlap{ .}}
$$
We then factor $s$ as a normal monomorphism $i \colon X' \to Y'$ followed by the normalisation $X' \to X$ to get a lift
$$
\xymatrix{Y' \ar@{=}[r] \ar[d]_{i} & Y' \ar[d] \\
X' \ar[r]_{ft} \ar@{-->}[ur]|{f'} & Y\rlap{ .}}
$$
The map $f'i = 1_{Y'}$, and since $Y' \to X'$ is a normal monomorphism, so is $\partial I \square X' \cup I \square Y' \to I \square X'$.  Therefore, there is a lift
 $$
 \xymatrixcolsep{12ex}\xymatrix{
 \partial I \square X' \cup I \square Y'  \ar[r]^-{(if',1_{X'})\cup i_\epsilon} \ar[d] & X'\ar[d] \\
I \square X' \ar@{-->}[ur] \ar[r]_{ft_\epsilon} & X \rlap{ .}
}
$$
We have therefore constructed a homotopy from $if'$ to $1_{X'}$.  Therefore the normalisation $i$ of $f$ is a homotopy equivalence, and therefore induces an equivalence of Kan complexes.
\end{proof}

\subsection{Augmented Trivial Cofibrations}

\begin{definition}
A \emph{trivial cofibration} of $\mathbb{A}$-sets is a cofibration which is also a weak equivalence.
\end{definition}

\begin{lemma}
A pushout of a trivial cofibration is a trivial cofibration
\end{lemma}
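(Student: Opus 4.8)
The plan is to show that trivial cofibrations are stable under pushout by separately handling the two constituent properties — being a cofibration, and being a weak equivalence — and then combining them. The cofibration half is immediate: cofibrations here are exactly the normal monomorphisms, and by Lemma~\ref{normalmonocound} (or directly, since the class is saturated) normal monomorphisms are closed under pushout. So the entire content of the lemma is that the pushout of a weak equivalence which happens to be a normal monomorphism is again a weak equivalence. Let $i \colon A \to B$ be a trivial cofibration and form the pushout
$$\xymatrix{A \ar[r] \ar[d]_i & C \ar[d]^j \\ B \ar[r] & D}$$
so that $j \colon C \to D$ is the pushout of $i$; I must show $j$ is a weak equivalence.

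First I would reduce to the normal case. Using the remark that a normalisation $X' \to X$ is a weak equivalence, together with the fact (to be extracted from the constructions in Section~\ref{aughomot}) that weak equivalences satisfy two-out-of-three, it suffices to prove the statement for a pushout diagram of \emph{normal} $\mathbb{A}$-sets; concretely, one replaces the square by a normalised one, using that $A$, $B$, $C$ can be taken normal and that Corollary~\ref{normtonorm} then forces $D$ and all the maps in play to be suitably normal. In the normal setting, the criterion for a weak equivalence from Definition~\ref{weakeqdeff} becomes directly checkable: $j$ is a weak equivalence iff $\underline{\text{hom}}^\Delta(D,Z) \to \underline{\text{hom}}^\Delta(C,Z)$ is a weak equivalence of Kan complexes for every $\mathbb{A}$-Kan complex $Z$. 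Applying the contravariant functor $\underline{\text{hom}}^\Delta(-,Z)$ to the pushout square turns it into a pullback square of simplicial sets
$$\xymatrix{\underline{\text{hom}}^\Delta(D,Z) \ar[r] \ar[d] & \underline{\text{hom}}^\Delta(C,Z) \ar[d] \\ \underline{\text{hom}}^\Delta(B,Z) \ar[r] & \underline{\text{hom}}^\Delta(A,Z)}$$
and, crucially, the right-hand vertical map $\underline{\text{hom}}^\Delta(C,Z)\to$ wait — the relevant point is that by the Proposition preceding the last Corollary of Section~4.2, the map $\underline{\text{hom}}^\Delta(B,Z) \to \underline{\text{hom}}^\Delta(A,Z)$ induced by the normal monomorphism $i$ is a Kan \emph{fibration}, and since $i$ is a weak equivalence it is moreover a trivial Kan fibration. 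Therefore the displayed square is a pullback of a trivial fibration along an arbitrary map, hence the top map $\underline{\text{hom}}^\Delta(D,Z) \to \underline{\text{hom}}^\Delta(C,Z)$ is again a trivial Kan fibration, in particular a weak equivalence of Kan complexes. As this holds for every $\mathbb{A}$-Kan complex $Z$, the map $j$ is a weak equivalence, and being also a normal monomorphism it is a trivial cofibration.

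The main obstacle I anticipate is the reduction to the normal case and the bookkeeping around two-out-of-three for weak equivalences: one has to be careful that normalising the pushout square can be done compatibly (i.e.\ produce an actual commuting cube with the front face the original square, the back face a pushout of normal objects, and the connecting maps weak equivalences), and that the weak equivalences — defined only via a cofibrant-replacement condition — genuinely enjoy two-out-of-three and are closed under the relevant comparisons. Once the square is normal, the argument is the clean "$\underline{\text{hom}}^\Delta(-,Z)$ sends pushouts to pullbacks, and a pullback of a trivial fibration is a trivial fibration" observation, exactly as in the dendroidal blueprint referenced in the remark at the start of Section~\ref{sec:4}; the only genuinely augmentation-category-specific input is the Kan-fibration property of $i^\ast$, which has already been established using (AC3).
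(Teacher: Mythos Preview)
Your proposal is correct and follows essentially the same route as the paper: closure of normal monomorphisms under pushout handles the cofibration part, and for the weak-equivalence part you apply $\underline{\text{hom}}^\Delta(-,Z)$ to turn the pushout into a pullback, observe that $\underline{\text{hom}}^\Delta(B,Z)\to\underline{\text{hom}}^\Delta(A,Z)$ is a trivial Kan fibration, and use stability of trivial fibrations under pullback. The reduction to the normal case that you flag as the main obstacle is exactly what the paper calls the \emph{cube argument}: one normalises $D$ first and pulls back along $D'\to D$ to obtain a cube whose top face is again a pushout of normal objects (your description of the desired cube in the final paragraph matches this precisely, though your earlier phrasing ``$A$, $B$, $C$ can be taken normal'' is slightly misleading --- it is $D$ that is normalised, and normality of the others follows from Corollary~\ref{normtonorm}).
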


\begin{proof}
Let $f \colon A \to B$ be a trivial cofibration and let
$$\xymatrix{A \ar[r] \ar[d] & C \ar[d] \\
B \ar[r] & D}$$
be a pushout square.  We have that normal monomorphisms are closed under pushouts, therefore $C \to D$ is a normal monomorphism.  We need only show that it is also a weak equivalence.  Assume that $A$ and $B$ are normal.  For a $\mathbb{A}$-Kan complex $Z$, we have an induced pullback square
$$\xymatrix{\underline{\text{hom}}^\Delta(D,Z) \ar[r] \ar[d] & \underline{\text{hom}}^\Delta(B,Z) \ar[d] \\
\underline{\text{hom}}^\Delta(C,Z) \ar[r] & \underline{\text{hom}}^\Delta(A,Z) \rlap{ .}}$$
The right side vertical map is a trivial fibration (of simplicial sets) due to the assumption of $A \to B$ being a trivial cofibration between normal objects.  Trivial fibrations are closed under pullbacks and therefore the left vertical map is also a trivial fibration.  Therefore we have shown that $C \to D$ is a trivial cofibration for the case of $A$ and $B$ normal.
Now we assume that $A$ and $B$ are not normal.  The following method is called the \emph{cube argument} (\cite[Lemma 5.3.2]{basicthesis}).  Let $D' \to D$ be a normalisation of $D$ and consider the commutative diagram
$$
\xymatrix{A' \ar[rr] \ar[dr] \ar[dd] & & C' \ar[dr] \ar[dd]|\hole \\
 & B'  \ar[rr] \ar[dd]& & D' \ar[dd]\\
A   \ar[rr]|\hole \ar[dr] & & C \ar[dr] \\
& B  \ar[rr] & & D}
$$
such that the vertical squares are pullbacks.  As we are working in a presheaf category, the pullback of a monomorphism is also a monomorphism.  We have that $D'$ is normal, and therefore by Corollary \ref{normtonorm} we have that the maps $A' \to B'$ and $C' \to D'$ are normal monomorphisms and as such, all vertical maps are normalisations.  The top square is a pushout as pullbacks preserve pushouts in any presheaf category.  Therefore $A' \to B'$ is a trivial cofibration between normal objects, and we have already shown that $C' \to D'$ is also a trivial cofibration.  Therefore we have $C \to D$ has a normalisation which is a stable weak equivalence, and is therefore itself a stable weak equivalence.
\end{proof}

\begin{lemma}\label{anodaretriv}
Anodyne extensions are trivial cofibrations.
\end{lemma}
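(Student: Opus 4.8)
The plan is to show that every anodyne extension is both a normal monomorphism (i.e., a cofibration) and a weak equivalence. The cofibration half is immediate: each generating horn inclusion $\Lambda^f\mathbb{A}[a] \to \mathbb{A}[a]$ is a monomorphism whose codomain $\mathbb{A}[a]$ is a normal object (representables are normal, since $\mathrm{Aut}(a)$ acts freely on the non-degenerate elements of $\mathbb{A}[a]$ away from the top element — this is exactly the hypothesis needed, and it can be cited from the EZ-structure), so by Corollary \ref{normtonorm} every horn inclusion is a normal monomorphism; the class of normal monomorphisms is saturated (Lemma \ref{normalmonocound}), hence contains the saturation of the horn inclusions, which is the class of anodyne extensions.

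For the weak-equivalence half, the key reduction is that weak equivalences between normal objects can be detected on the simplicial mapping complexes $\underline{\mathrm{hom}}^\Delta(-,Z)$ for $Z$ ranging over $\mathbb{A}$-Kan complexes. First I would observe that the generating horn inclusions all have normal domain and codomain (again by Corollary \ref{normtonorm}), and that this property is preserved under the operations generating the saturated class, so it suffices to prove: for an anodyne extension $j\colon A \to B$ between normal objects, and any $\mathbb{A}$-Kan complex $Z$, the restriction map $j^\ast\colon \underline{\mathrm{hom}}^\Delta(B,Z) \to \underline{\mathrm{hom}}^\Delta(A,Z)$ is a weak equivalence of simplicial sets. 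By the Proposition above (with $f = j$), $j^\ast$ is already a Kan fibration; so it is enough to show it is a trivial fibration, i.e. has the RLP with respect to every boundary inclusion $\partial\Delta[n] \to \Delta[n]$. By the standard adjunction/pushout-product manipulation, $j^\ast$ has the RLP against $\partial\Delta[n] \to \Delta[n]$ if and only if $Z$ has the RLP against the pushout-product
$$
A \square \Delta[n] \sqcup_{A \square \partial\Delta[n]} B \square \partial\Delta[n] \;\longrightarrow\; B \square \Delta[n].
$$
Now $\partial\Delta[n] \to \Delta[n]$ is a linear normal monomorphism by definition (AC2), so by (AC3) the displayed pushout-product is again a normal monomorphism; but I need more — I need it to be \emph{anodyne}. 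This is the crucial point: one shows that the pushout-product of an anodyne extension with \emph{any} normal monomorphism is anodyne, and in particular the pushout-product of $j$ with the linear monomorphism $\partial\Delta[n]\to\Delta[n]$ is anodyne. This is the "pushout-product / anodyne" lemma, proved exactly as in the dendroidal case \cite{basicthesis}: by a saturation argument it reduces to the case where $j$ is a generating horn inclusion $\Lambda^f\mathbb{A}[a]\to\mathbb{A}[a]$ and the other map is a generating boundary inclusion, and this base case is precisely axiom (AC3) combined with the observation that a pushout-product landing among the horn inclusions is anodyne. Given that, $Z$ an $\mathbb{A}$-Kan complex has the RLP against the pushout-product, so $j^\ast$ is a trivial Kan fibration, hence a weak equivalence of simplicial sets; therefore $j$ is a weak equivalence of $\mathbb{A}$-sets.

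The main obstacle is the pushout-product/anodyne lemma just invoked: strictly speaking (AC3) only guarantees that the relevant pushout-products are normal monomorphisms, whereas here I need them to be anodyne. The standard way around this — following \cite{basicthesis} and \cite{MR2778589} — is to enlarge the generating set: one defines an auxiliary saturated class generated by all pushout-products of horn inclusions with boundary inclusions (and their iterates), shows by a combinatorial filtration/"anodyne completion" argument that this class coincides with the anodyne extensions, and then (AC3) suffices to keep everything inside it. Since the excerpt explicitly tells us the arguments of \cite{basicthesis} transfer verbatim with $\Omega$ replaced by $\mathbb{A}$, I would cite that development for this combinatorial step rather than reproduce it. The remaining pieces — that representables and their tensors with $\Delta[n]$ are normal, that normality is inherited along the cube/pushout constructions, and the adjunction identity rewriting lifting problems — are all routine and already available from Corollary \ref{normtonorm}, Lemma \ref{normalmonocound}, and the simplicial enrichment set up at the start of Section \ref{sec:4}.
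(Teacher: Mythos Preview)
Your approach is essentially the paper's: show that for each horn inclusion (more generally, each anodyne extension between normal objects) the induced map $\underline{\mathrm{hom}}^\Delta(-,Z)$ is a trivial Kan fibration by rewriting the lifting problem against $\partial\Delta[n]\to\Delta[n]$ as a lifting problem for $Z$ against a pushout-product, and then arguing that this pushout-product is anodyne. The paper carries this out only for the generating horn inclusions $\Lambda^f\mathbb{A}[a]\to\mathbb{A}[a]$ (implicitly using that trivial cofibrations form a saturated class, so the generators suffice), whereas you keep $j$ general; that is a cosmetic difference.

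Where your write-up is actually \emph{more} careful than the paper's is the point you flag as the ``main obstacle''. The paper simply writes ``By (AC3), we have that this map is an anodyne extension'', but (AC3) as stated in Definition~\ref{augdef} only asserts that the pushout-product is a \emph{normal monomorphism}, not an anodyne extension. The same over-claim appears in the proof of the preceding Proposition. So the step you worry about---upgrading (AC3) from ``normal mono'' to ``anodyne'' for the pushout-product of a horn inclusion with a linear boundary inclusion---is exactly the step the paper takes for granted. Your proposal to import the combinatorial ``anodyne pushout-product'' argument from \cite{basicthesis} (which the paper explicitly says transfers with $\Omega$ replaced by $\mathbb{A}$) is the honest way to close that gap, and is consistent with how the paper verifies (AC3) in the examples by citing the stronger results in \cite{erratadendroidal} and Lemma~\ref{ismonoidalclosed}.

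In short: same strategy, and you have correctly isolated the one non-formal input that the paper's proof leaves implicit.
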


\begin{proof}
We need only show that every horn inclusion is a weak equivalence.  Let $\Lambda^f \mathbb{A}[a] \to \mathbb{A}[a]$ be such a horn inclusion and $\partial \Delta [n] \to \Delta[n]$  simplicial boundary inclusion.  We have by (AC3) that the map
$$\partial \Delta[n] \square \mathbb{A}[a] \sqcup \Delta[n] \square \Lambda^f\mathbb{A}[a] \to \Delta[n] \square \mathbb{A}[a]$$ 
is an anodyne extension, so every $\mathbb{A}$-Kan complex $Z$ has the RLP with respect to it.  Therefore the map $\underline{\text{hom}}^\Delta(\mathbb{A}[a],Z) \to \underline{\text{hom}}^\Delta(\Lambda^f\mathbb{A}[a],Z)$ is a trivial fibration of simplicial sets under the assumption that $Z$ is Kan.  Therefore $\Lambda^f \mathbb{A}[a] \to \mathbb{A}[a]$ is a weak equivalence and the result is proven.  
\end{proof}

\begin{lemma}\label{lemma524}
Every trivial cofibration is a retract of a pushout of a trivial cofibration between normal objects.
\end{lemma}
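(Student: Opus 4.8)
The plan is to reduce the general statement to the case of trivial cofibrations \emph{between normal objects}, for which it is immediate (take the pushout along an identity); the reduction replaces the target by a normalisation and then pulls back.

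So let $f\colon A\to B$ be a trivial cofibration. First I would choose a normalisation $p_B\colon B'\to B$, so that $B'$ is normal and $p_B$ has the right lifting property with respect to all normal monomorphisms; by Lemma~\ref{525}, $p_B$ is a weak equivalence. Forming the pullback
$$\xymatrix{A' \ar[r]^{f'} \ar[d]_{p_A} & B' \ar[d]^{p_B} \\ A \ar[r]_{f} & B}$$
in the topos $\widehat{\mathbb{A}}$, the map $f'$ is a monomorphism (a pullback of the monomorphism $f$), and $A'$, being a subobject of the normal object $B'$, is normal by Corollary~\ref{normtonorm}; hence $f'$ is a normal monomorphism between normal objects. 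Since having the right lifting property with respect to normal monomorphisms is stable under base change, $p_A$ again has this property, so it is a weak equivalence by Lemma~\ref{525}; applying two-out-of-three for weak equivalences to $p_B\circ f'=f\circ p_A$ then shows $f'$ is a weak equivalence. Thus $f'\colon A'\to B'$ is a trivial cofibration between normal objects.

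Next I would form the pushout
$$\xymatrix{A' \ar[r]^{p_A} \ar[d]_{f'} & A \ar[d]^{\bar f'} \\ B' \ar[r] & C}$$
so that $\bar f'\colon A\to C$ is again a trivial cofibration, since pushouts of trivial cofibrations are trivial cofibrations (proved above), and the maps $f\colon A\to B$ and $p_B\colon B'\to B$, which agree on $A'$, induce a comparison map $\psi\colon C\to B$ with $\psi\circ\bar f'=f$. The key point is then that $\psi$ has the right lifting property with respect to all normal monomorphisms; granting this, the square with top edge $\bar f'$, left edge $f$, bottom edge $1_B$ and right edge $\psi$ commutes, and the normal monomorphism $f$ lifts against the trivial fibration $\psi$, producing $\sigma\colon B\to C$ with $\sigma f=\bar f'$ and $\psi\sigma=1_B$. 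The diagram
$$\xymatrix{A \ar@{=}[r] \ar[d]_{f} & A \ar@{=}[r] \ar[d]^{\bar f'} & A \ar[d]^{f} \\ B \ar[r]_{\sigma} & C \ar[r]_{\psi} & B}$$
then exhibits $f$ as a retract of $\bar f'$, which is a pushout of the trivial cofibration $f'$ between normal objects, as required.

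The main obstacle is proving that $\psi\colon C\to B$ has the right lifting property with respect to normal monomorphisms. By Lemma~\ref{normalmonocound} it suffices to solve lifting problems against the boundary inclusions $\partial\mathbb{A}[a]\to\mathbb{A}[a]$, and I would do this by pulling the entire pushout–pullback diagram back along the representing map $\mathbb{A}[a]\to B$: since $\widehat{\mathbb{A}}$ is a topos, pullback commutes with the pushout defining $C$, and the pullbacks of $p_A$ and $p_B$ still have the right lifting property with respect to normal monomorphisms — in particular they admit sections over $\mathbb{A}[a]$, as $\mathbb{A}[a]$ is normal. One then constructs the required filler from a section supplied by (the pullback of) $p_B$, corrected on $\partial\mathbb{A}[a]$ by a lift through (the pullback of) $p_A$, which exists because $\partial\mathbb{A}[a]$ is normal. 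This is exactly the ``cube argument'' device already used for the stability of trivial cofibrations under pushout, and it is the only delicate step; the remaining bookkeeping is formal.
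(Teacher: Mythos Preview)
Your proposal is correct and follows essentially the same strategy as the paper: produce a trivial cofibration $f'\colon A'\to B'$ between normal objects lying over $f$, push it out along $A'\to A$, and then show that the comparison map from the pushout to $B$ has the right lifting property against boundary inclusions via a cube argument, yielding the desired retract. The only notable difference is in how you manufacture $A'$ and $B'$: the paper first chooses a normalisation $A'\to A$ and then \emph{factors} the composite $A'\to B$ as a normal monomorphism $A'\to B'$ followed by a map $B'\to B$ with the right lifting property, whereas you first normalise $B$ and then \emph{pull back} along $f$ to obtain $A'$. Both routes give the same input to the pushout step, and your use of Corollary~\ref{normtonorm} and stability of the lifting property under base change is exactly right.

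One small comment on the cube argument: the paper pulls the pushout square back along the given map $\partial\mathbb{A}[a]\to P$ (your $C$), which immediately produces a section of $B'\times_P\partial\mathbb{A}[a]\to\partial\mathbb{A}[a]$ and hence a compatible map $\partial\mathbb{A}[a]\to B'$ against which one can lift through $p_B$. Your proposed base change along $\mathbb{A}[a]\to B$ is workable but slightly less direct---the ``correction on $\partial\mathbb{A}[a]$ via $p_A$'' still needs the further pullback along $\partial\mathbb{A}[a]\to \tilde C$ to produce a map into $\tilde{B'}$ compatible with the given boundary datum, at which point you are effectively running the paper's argument inside the pulled-back diagram. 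This is not a gap, just a matter of packaging.
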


\begin{proof}
Let $u \colon A \to B$ be a trivial cofibration, and $A' \to A$ a normalisation of $A$.  We consider the following commutative diagram
$$\xymatrix{A' \ar[r]^{u'} \ar[d] & B' \ar[d] \\
A \ar[r]_u & B}$$
constructed by factoring $A' \to B$ as a normal monomorphism $A' \to B'$ followed by a normalisation of $B$.  As normalisations are weak equivalences, and weak equivalences satisfy the two out of three property, (as the weak equivalences in $\widehat{\Delta}_\text{Kan}$ do so), we have that $A' \to B'$ is a trivial cofibration between normal objects.  We now consider the pushout
$$\xymatrix{A' \ar[r]^u \ar[d] & B' \ar[d] \\
A \ar[r]_v & P}$$
which provides a map $s \colon P \to B$.  We need to show that $s$ has the RLP with respect to the normal monomorphisms, as this would ensure that $u$ is a retract of $v$ via the lifting
$$\xymatrix{A \ar[r]^v \ar[d]_u& P \ar[d]^s \\
B \ar@{=}[r] \ar@{-->}[ur]& B \rlap{ .}}$$
Therefore, we consider the lifting problem
$$\xymatrix{\partial \mathbb{A}[a] \ar[r] \ar[d] & P \ar[d]^s \\
\mathbb{A}[a] \ar[r] & B\rlap{ .}}$$
Using the \emph{cube argument} again, we pullback along $\partial \mathbb{A} [a] \to P$ to form the cube:
$$
\xymatrix{E \ar[rr] \ar[dr] \ar[dd] & & D \ar[dr] \ar[dd]|\hole \\
 & C \ar[rr] \ar[dd]& & \partial \mathbb{A}[a] \ar[dd]\\
A'   \ar[rr]|\hole \ar[dr] & & B' \ar[dr] \\
& A  \ar[rr] & & P \rlap{ ,}}
$$
where the horizontal faces are pushouts and the vertical faces are pullbacks.  We have that $E \to C$ is a normalisation, and therefore all the objects in the top face are normal.  Therefore $E \to C$ has a section and therefore so does the pushout $D \to \partial \mathbb{A}[a]$.  Using this section, we are able to form a commutative diagram
$$\xymatrix{\partial \mathbb{A}[a] \ar[d] \ar[r] & D \ar[r] & B' \ar[d] \\
\mathbb{A}[a] \ar@{-->}[urr] \ar[rr] && B}$$
in which the lift exists.  This therefore gives a solution to the required lifting problem.
\end{proof}

We say that a $\mathbb{A}$-set $X$ is \emph{countable} if each $X(a)$ is a countable set.

\begin{lemma}[{\cite[Proposition 5.3.8]{basicthesis}}]\label{normalsuff}
The class of trivial cofibrations is generated by the trivial cofibrations between countable and normal objects.
\end{lemma}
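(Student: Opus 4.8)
The statement to prove is Lemma~\ref{normalsuff}: the class of trivial cofibrations is generated by trivial cofibrations between countable normal objects. By Lemma~\ref{lemma524}, every trivial cofibration is a retract of a pushout of a trivial cofibration between normal objects, so it suffices to show that every trivial cofibration $u\colon A\to B$ between \emph{normal} objects lies in the saturated class generated by trivial cofibrations between countable normal objects. The standard device here is a transfinite filtration of $B$ by a continuous chain of subobjects, each step of which is a pushout of a countable trivial cofibration; this is precisely the kind of ``write $B$ as a colimit of its countable subobjects'' argument that underlies the Bousfield--Smith cardinality argument, adapted to the normal setting as in \cite{basicthesis}.

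\textbf{Key steps.} First I would fix a regular cardinal $\kappa$ large enough that $\widehat{\mathbb{A}}$ is locally $\kappa$-presentable and such that ``countable'' behaves as the relevant smallness bound (in the dendroidal reference this is the countable case; the point is that every object is a $\kappa$-filtered colimit of its countable subobjects). Then, given a trivial cofibration $u\colon A\to B$ between normal objects, I would build by transfinite induction an increasing chain $A = B_0 \subseteq B_1 \subseteq \cdots \subseteq B_\lambda \subseteq \cdots$ of normal subobjects of $B$, with $B = \bigcup B_\lambda$ and each $B_\lambda \to B_{\lambda+1}$ obtained as a pushout of a trivial cofibration between countable normal objects. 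The crucial ingredient is a \emph{pushout-stability / absorption} property: given any countable subobject $C\subseteq B$, one can enlarge it to a countable subobject $\overline{C}\subseteq B$ such that $\overline{C}\cap A \to \overline{C}$ is again a trivial cofibration. Granting this, at successor stages one picks a countable subobject of $B$ not yet contained in $B_\lambda$, absorbs it into some $\overline{C}$ with $\overline{C}\cap A \to \overline{C}$ a trivial cofibration, sets $B_{\lambda+1} = B_\lambda \cup \overline{C}$, and observes that $B_\lambda \to B_{\lambda+1}$ is the pushout of the countable trivial cofibration $B_\lambda\cap\overline{C} \to \overline{C}$ along $B_\lambda\cap\overline{C}\hookrightarrow B_\lambda$; normality of all objects involved follows from Corollary~\ref{normtonorm} since they are subobjects of the normal object $B$. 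At limit stages one takes unions, which are trivial cofibrations since the trivial cofibrations are closed under transfinite composition (a pushout of a trivial cofibration is a trivial cofibration by the earlier lemma, and weak equivalences are closed under the relevant filtered colimits because this holds in $\widehat{\Delta}_\text{Kan}$).

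\textbf{The main obstacle.} The heart of the matter is the absorption step: producing, from an arbitrary countable subobject, a countable \emph{enlargement} $\overline{C}$ for which $\overline{C}\cap A \to \overline{C}$ is a weak equivalence (not merely a monomorphism). The monomorphism and normality parts are automatic from the presheaf structure and Corollary~\ref{normtonorm}; it is the weak-equivalence part that requires work. The argument runs via the simplicial characterisation of weak equivalences in Definition~\ref{weakeqdeff}: one must ensure $\underline{\text{hom}}^\Delta(\overline{C},Z)\to\underline{\text{hom}}^\Delta(\overline{C}\cap A,Z)$ is a trivial fibration for all $\mathbb{A}$-Kan $Z$, which via adjunction amounts to controlling lifting problems against the countably many boundary and horn inclusions $\partial\mathbb{A}[a]\to\mathbb{A}[a]$ and $\Lambda^f\mathbb{A}[a]\to\mathbb{A}[a]$; each such lifting problem, being finitely presented, can be witnessed inside a countable subobject, and iterating this absorption countably many times and taking the union yields the desired $\overline{C}$ (still countable, since a countable union of countable sets is countable). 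Closing this loop carefully — making sure the iterative enlargement stays countable and genuinely forces the weak equivalence in the colimit — is the one genuinely delicate point; everything else is the routine transfinite bookkeeping of a small-object-style argument, and I would cite \cite[Proposition 5.3.8]{basicthesis} for the precise form of the cardinality estimate, adapting $\Omega$ to $\mathbb{A}$ as in the remark at the start of this section.
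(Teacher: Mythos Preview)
The paper does not supply a proof of this lemma at all; it simply records the statement and cites \cite[Proposition 5.3.8]{basicthesis}. Your sketch is the standard Bousfield--Smith cardinality argument (reduce to normal source and target via Lemma~\ref{lemma524}, then run a transfinite exhaustion of $B$ by countable normal subobjects, using an absorption lemma to guarantee each successor step is a pushout of a countable trivial cofibration), and this is exactly the method employed in the cited reference for the dendroidal case; the paper's remark at the start of Section~\ref{sec:4} explicitly says that one adapts those arguments by replacing $\Omega$ with $\mathbb{A}$. So there is nothing in the paper to compare against, and your outline matches the intended (cited) proof.

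One small sharpening: in your absorption step you phrase the weak-equivalence condition as lifting against ``the countably many boundary and horn inclusions $\partial\mathbb{A}[a]\to\mathbb{A}[a]$ and $\Lambda^f\mathbb{A}[a]\to\mathbb{A}[a]$''. What you actually need is that $\underline{\text{hom}}^\Delta(\overline{C},Z)\to\underline{\text{hom}}^\Delta(\overline{C}\cap A,Z)$ is a \emph{trivial} Kan fibration for every $\mathbb{A}$-Kan $Z$, and by adjunction this unwinds to lifting problems in $\widehat{\mathbb{A}}$ against the pushout-products of simplicial boundary inclusions with $\overline{C}\cap A\hookrightarrow\overline{C}$. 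The finiteness that makes the countable closure work is that each such lifting datum involves only finitely many cells of $B$ (because the representables $\mathbb{A}[a]$ and their boundaries are finite presheaves in an EZ-category), so the witnesses can be absorbed into a countable enlargement; this is the precise form the argument takes in \cite{basicthesis}, and it goes through verbatim here provided $\mathbb{A}$ has only countably many isomorphism classes of objects --- an implicit hypothesis the paper does not flag but which holds in all the examples considered.
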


\subsection{The Model Structure}

\begin{definition}\label{fibdef}
A morphism in $\widehat{\mathbb{A}}$ is a \emph{fibration} if it has the RLP with respect to the trivial cofibrations.
\end{definition}

\begin{theorem}
There is a cofibrantly generated model structure on $\widehat{\mathbb{A}}$ with the defined class of weak equivalences (Definition \ref{weakeqdeff}), fibrations (Definition \ref{fibdef}) and cofibrations (Definition \ref{thecofibrations}).  We will denote this model structure $\widehat{\mathbb{A}}_\text{Kan}$.
\end{theorem}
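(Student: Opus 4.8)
The plan is to verify the model category axioms MC1--MC5 directly, following the template used for the stable model structure on dendroidal sets, and to exhibit explicit generating sets so that the structure is cofibrantly generated. Existence of all small limits and colimits (MC1) is automatic for the presheaf category $\widehat{\mathbb{A}}$. Closure under retracts (MC3) is clear for each class: cofibrations are the saturated class of boundary inclusions by Lemma~\ref{normalmonocound}; fibrations are defined by a right lifting property (Definition~\ref{fibdef}); and weak equivalences are detected by $\underline{\text{hom}}^\Delta(-,Z)$ being an equivalence of Kan complexes for all $\mathbb{A}$-Kan $Z$, so a retract of a weak equivalence maps to a retract of an equivalence of Kan complexes and is again a weak equivalence — the only point to check being that replacing a map by a normalisation does not affect the conclusion, which is the content of the remarks after Definition~\ref{weakeqdeff} together with the cube argument. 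The same remarks give the two-out-of-three property (MC2), transported from $\widehat{\Delta}_\text{Kan}$. Half of MC4, that trivial cofibrations lift against fibrations, is the definition of fibration. For generating sets I take $I = \{\partial\mathbb{A}[a]\to\mathbb{A}[a] : a\in\mathbb{A}\}$ and, appealing to Lemma~\ref{normalsuff}, a set $J$ of representatives for the trivial cofibrations between countable normal objects.

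The factorizations (MC5) come from Quillen's small object argument, applicable since every object of $\widehat{\mathbb{A}}$ is small. Running it on $I$ factors a given $f$ as $f = p\circ i$ with $i$ a relative $I$-cell complex, hence a normal monomorphism by Lemma~\ref{normalmonocound}, and $p$ with the RLP against every normal monomorphism; by Lemma~\ref{525} such a $p$ is a weak equivalence, and it is a fibration because trivial cofibrations are in particular normal monomorphisms, so this is a (cofibration, trivial fibration) factorization, where ``trivial fibration'' means a map that is simultaneously a fibration and a weak equivalence. Running the small object argument on $J$ factors $f = q\circ j$ with $j$ a relative $J$-cell complex and $q$ with the RLP against $J$; since pushouts of trivial cofibrations are trivial cofibrations and the cofibration part is stable under transfinite composition, $j$ is a trivial cofibration, while $q$ has the RLP against all trivial cofibrations by Lemma~\ref{normalsuff} and so is a fibration. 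This yields the (trivial cofibration, fibration) factorization.

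It remains to supply the missing half of MC4, equivalently to show that the trivial fibrations are exactly the maps with the RLP against all normal monomorphisms. One direction was noted above. Conversely, let $p\colon X\to Y$ be a fibration which is a weak equivalence. Using the $I$-factorization write $p = q\circ \iota$ with $\iota\colon X\to Z$ a normal monomorphism and $q\colon Z\to Y$ having the RLP against all normal monomorphisms; then $q$ is a trivial fibration by the easy direction, so two-out-of-three forces $\iota$ to be a weak equivalence and therefore a trivial cofibration. Since $p$ is a fibration, the square with $\iota$ on the left and $p$ on the right has a diagonal filler, and the usual retract argument exhibits $p$ as a retract of $q$ over $Y$; hence $p$ inherits from $q$ the RLP against all normal monomorphisms. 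With MC1--MC5 in hand and $I$, $J$ generating the cofibrations and the trivial cofibrations, the model structure $\widehat{\mathbb{A}}_\text{Kan}$ exists and is cofibrantly generated.

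The step I expect to be the main obstacle is the claim, used in the $J$-factorization, that a transfinite composite of trivial cofibrations is again a weak equivalence (the cofibration part being formal). To see this one applies $\underline{\text{hom}}^\Delta(-,Z)$ for an $\mathbb{A}$-Kan complex $Z$, turning the composite into a tower of trivial fibrations of simplicial sets between Kan complexes, and uses that trivial fibrations of simplicial sets are closed under such inverse limits; but this first requires reducing to normal objects via Lemma~\ref{lemma524} and the cube argument, so that $\underline{\text{hom}}^\Delta(-,Z)$ has the expected homotopical behaviour at each stage. The accompanying bookkeeping — that every statement proved for normal objects must be transported to general $\mathbb{A}$-sets through a normalisation without disturbing the relevant property — is exactly what the normalisation functor and the cube argument of Section~\ref{sec:4} were built to handle, and I would rely on them throughout rather than reprove anything.
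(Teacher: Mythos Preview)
Your proof is correct and follows essentially the same route as the paper: verify the model category axioms directly, using the small object argument on the boundary inclusions and on the set of trivial cofibrations between countable normal objects (Lemma~\ref{normalsuff}) to obtain the two factorizations, with Lemma~\ref{525} supplying the weak equivalence part of the first. Two minor differences are worth noting. For the second half of the lifting axiom, the paper constructs the lift directly---factor the trivial fibration $p$ as a cofibration followed by a map with the RLP against all cofibrations, then compose two lifts---whereas you run the standard retract argument to identify trivial fibrations with $I$-injectives; these are the same argument rearranged. You are also more explicit than the paper about why a transfinite composite of trivial cofibrations remains one: the paper absorbs this into the black-box reference for Lemma~\ref{normalsuff}, while you sketch the tower-of-trivial-fibrations argument after reducing to normal objects, which is the right thing to do.
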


\begin{proof}
We will show that the axioms (CM1)-(CM5) hold.  First of all, (CM1) holds automatically as $\widehat{\mathbb{A}}$ is a presheaf category.  (CM2) holds from the definition of weak equivalences and the fact that the weak equivalences in the Quillen model on $\widehat{\Delta}$ satisfy this property.  Axiom (CM3) also holds without much concern.  The first non-trivial axiom to show is (CM5).  The fact that every map can be factored as a cofibration followed by a trivial fibration follows from the small object argument for the set of all boundary inclusions and Lemma \ref{525}.  The factorisation as a fibration followed by a trivial cofibration follows from the small object argument for the set of all trivial cofibrations between normal countable $\mathbb{A}$-sets by Lemma \ref{normalsuff}.

One half of (CM4) holds from the definition of the fibrations (Definition \ref{fibdef}).  Assume we have the following commutative digram:
$$\xymatrix{A \ar[r] \ar[d]_i & X \ar[d]^p \\
B \ar[r] & Y \rlap{ ,}}$$  
for $i$ a cofibration and $p$ a trivial fibration.  We need to produce a lift $B \to X$.  We factor $p \colon X \to Y$ as a cofibration $Y \to Z$ followed by a map $Z \to X$ having the RLP with respect to all cofibrations.  Then $Z \to X$ is a weak equivalence and by two out of three, so is $Y \to Z$.  We first find the lift in
$$\xymatrix{A \ar[r] \ar[d]_i & X \ar[r] & Z \ar[d] \\
B \ar[rr] \ar@{-->}[urr] && Y \rlap{ ,}}$$
and then in
$$\xymatrix{X \ar@{=}[r] \ar[d] & X \ar[d]^p \\
Z \ar@{-->}[ur] \ar[r] & Y \rlap{ .}}$$
The composition of the two lifts gives us the necessary lift $B \to X$.    The generating cofibrations are given by the boundary inclusions and the set of generating trivial cofibrations are the trivial cofibrations between normal and countable $\mathbb{A}$-sets. 
\end{proof}

\begin{proposition}
The fibrant objects in $\widehat{\mathbb{A}}_\text{Kan}$ are the $\mathbb{A}$-Kan complexes.
\end{proposition}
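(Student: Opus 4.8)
The plan is to prove the two inclusions separately, the forward one being immediate and the reverse one being the substantive part. For the easy inclusion, suppose $X$ is fibrant, i.e.\ the map $X \to \ast$ to the terminal object has the right lifting property with respect to all trivial cofibrations. By Lemma~\ref{anodaretriv} every horn inclusion $\Lambda^f\mathbb{A}[a] \to \mathbb{A}[a]$ is a trivial cofibration, so $X$ admits fillers for all horns; that is, $X$ is an $\mathbb{A}$-Kan complex.

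For the substantive inclusion, let $X$ be an $\mathbb{A}$-Kan complex and let $u \colon A \to B$ be an arbitrary trivial cofibration; we must solve every lifting problem of $u$ against $X \to \ast$. First I would reduce to the case where $A$ and $B$ are normal: by Lemma~\ref{lemma524}, which writes every trivial cofibration as a retract of a pushout of a trivial cofibration between normal objects (alternatively Lemma~\ref{normalsuff}), and since the class of morphisms with the left lifting property against $X \to \ast$ is closed under retracts, pushouts and transfinite composition, it suffices to handle $u$ with $A,B$ normal. For such a $u$, a lifting problem of $u$ against $X \to \ast$ is nothing but a map $A \to X$ together with the request that it factor through $u$; equivalently, one must show that
$$u^\ast \colon \underline{\text{hom}}^\Delta(B,X) \longrightarrow \underline{\text{hom}}^\Delta(A,X)$$
is surjective on $0$-simplices.

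The key point is that $u^\ast$ is a trivial Kan fibration of simplicial sets. Indeed, since $u$ is a normal monomorphism and $X$ is an $\mathbb{A}$-Kan complex, the proposition on mapping complexes into $\mathbb{A}$-Kan complexes shows that $u^\ast$ is a Kan fibration, and its corollary shows that both source and target are Kan complexes. On the other hand, because $A$ and $B$ are already normal, their identity maps serve as normalisations, so the hypothesis that $u$ is a weak equivalence (Definition~\ref{weakeqdeff}) says precisely that $u^\ast$ is a weak equivalence of Kan complexes. A map that is simultaneously a Kan fibration and a weak equivalence has the right lifting property against $\partial\Delta[n] \to \Delta[n]$ for all $n$, and in particular, taking $n=0$, is surjective on $0$-simplices; this is exactly the lift we needed, so $X$ is fibrant. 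I expect the only point requiring genuine care to be the reduction to the normal case, but this is precisely the content of the cube-argument Lemmas~\ref{lemma524} and~\ref{normalsuff}; once these are invoked, the remainder is a formal consequence of the weak enrichment of $\widehat{\mathbb{A}}$ in simplicial sets recorded at the start of the section.
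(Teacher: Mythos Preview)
Your proof is correct and follows essentially the same route as the paper's: both directions use Lemma~\ref{anodaretriv} for the forward inclusion, and for the converse both reduce via Lemma~\ref{lemma524} to trivial cofibrations between normal objects, then observe that the induced map on simplicial mapping spaces is a trivial Kan fibration and hence surjective on vertices. Your write-up is simply more explicit about why $u^\ast$ is a trivial fibration (separating the Kan-fibration and weak-equivalence parts), whereas the paper states this in one line.
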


\begin{proof}
Let $Z$ be fibrant in $\widehat{\mathbb{A}}_\text{Kan}$, by Lemma \ref{anodaretriv} we have that the anodyne extensions are exactly the trivial cofibrations, therefore $Z$ has the RLP with respect to the anodyne extensions and is therefore $\mathbb{A}$-Kan.
Conversely, let $Z$ be $\mathbb{A}$-Kan, and $A \to B$ a trivial cofibration between normal objects, then the map $\underline{\text{hom}}^\Delta(B,Z) \to \underline{\text{hom}}^\Delta(A,Z)$ is a trivial fibration of simplicial sets.  We have that the trivial fibrations in $\widehat{\Delta}_\text{Kan}$ are surjective on vertices, and we can deduce that $Z$ has the RLP with respect to the map $A \to B$.  Lemma \ref{lemma524} then implies that every $\mathbb{A}$-Kan complex has the RLP with respect to all trivial cofibrations.
\end{proof}

\subsection{Properties of $\text{Ho}(\widehat{\mathbb{A}}_\text{Kan})$}

In this section we briefly list some of the required properties of $\widehat{\mathbb{A}}_\text{Kan}$.  The first property that we prove is left-properness which will be essential when considering augmented presheaves.

\begin{proposition}
$\widehat{\mathbb{A}}_\text{Kan}$ is left-proper.
\end{proposition}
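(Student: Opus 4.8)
The plan is to deduce left-properness of $\widehat{\mathbb{A}}_\text{Kan}$ from left-properness of $\widehat{\Delta}_\text{Kan}$ by exploiting the simplicial description of weak equivalences in Definition~\ref{weakeqdeff}. Recall that every cofibration is a normal monomorphism, and that a map $f$ is a weak equivalence precisely when a normalisation $f' \colon X' \to Y'$ induces an equivalence of Kan complexes $\underline{\text{hom}}^\Delta(Y',Z) \to \underline{\text{hom}}^\Delta(X',Z)$ for every $\mathbb{A}$-Kan complex $Z$. So the whole question can be transported, object by object, into the category of simplicial sets via the functors $\underline{\text{hom}}^\Delta(-,Z)$, which turn pushouts in $\widehat{\mathbb{A}}$ along monomorphisms into pullbacks in $\widehat{\Delta}$ along Kan fibrations.

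The key steps, in order, are as follows. First, reduce to the case of a pushout diagram between normal objects: given a pushout square with $A \to B$ a cofibration (hence a normal monomorphism) and $A \to C$ a weak equivalence, apply the cube argument (as in the proof that pushouts of trivial cofibrations are trivial cofibrations, and in Lemma~\ref{lemma524}) to a normalisation $D' \to D$ of the pushout $D$, pulling back to obtain a cube whose vertical faces are pullbacks, whose top face is a pushout between normal objects (pullbacks preserve both monomorphisms and pushouts in a presheaf category), and whose vertical maps are normalisations; since normalisations are weak equivalences and weak equivalences satisfy two-out-of-three (inherited from $\widehat{\Delta}_\text{Kan}$), it suffices to prove left-properness for the top face, i.e. assume $A,B,C,D$ all normal. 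Second, fix an $\mathbb{A}$-Kan complex $Z$ and apply $\underline{\text{hom}}^\Delta(-,Z)$ to the (now normal) pushout square: because $A \to B$ is a normal monomorphism, the Proposition preceding the corollary on $\underline{\text{hom}}^\Delta(B,Z)$ gives that $\underline{\text{hom}}^\Delta(B,Z) \to \underline{\text{hom}}^\Delta(A,Z)$ is a Kan fibration, and the square
$$\xymatrix{\underline{\text{hom}}^\Delta(D,Z) \ar[r] \ar[d] & \underline{\text{hom}}^\Delta(C,Z) \ar[d] \\ \underline{\text{hom}}^\Delta(B,Z) \ar[r] & \underline{\text{hom}}^\Delta(A,Z)}$$
is a pullback. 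Third, observe that $A \to C$ being a weak equivalence between normal objects means $\underline{\text{hom}}^\Delta(C,Z) \to \underline{\text{hom}}^\Delta(A,Z)$ is a weak equivalence of Kan complexes; since $\widehat{\Delta}_\text{Kan}$ is right-proper (it is proper), the pullback of a weak equivalence along the Kan fibration $\underline{\text{hom}}^\Delta(B,Z) \to \underline{\text{hom}}^\Delta(A,Z)$ is again a weak equivalence, so $\underline{\text{hom}}^\Delta(D,Z) \to \underline{\text{hom}}^\Delta(B,Z)$ is a weak equivalence. Fourth, since this holds for all $\mathbb{A}$-Kan $Z$ and $B \to D$ is a map between normal objects, Definition~\ref{weakeqdeff} gives that $B \to D$ is a weak equivalence, which is exactly left-properness.

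The main obstacle is the bookkeeping in the first step: one must be careful that, in the cube, the normalisations on the various corners are compatible and that pulling back a normalisation along a monomorphism into a normal object again yields a normalisation between normal objects — this is where Corollary~\ref{normtonorm} and the fact that in a presheaf category pullbacks preserve both monomorphisms and pushouts are used repeatedly, exactly as in the already-established lemmas on trivial cofibrations. Once the reduction to normal objects is in hand, the argument is a clean transfer along $\underline{\text{hom}}^\Delta(-,Z)$ using right-properness of $\widehat{\Delta}_\text{Kan}$ and the Kan-fibration property proved earlier, with no further genuine difficulty.
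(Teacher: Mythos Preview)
Your proposal is correct and follows essentially the same route as the paper: reduce to normal objects via the cube argument, apply $\underline{\text{hom}}^\Delta(-,Z)$ to turn the pushout into a pullback of Kan complexes, and then use properties of $\widehat{\Delta}_\text{Kan}$ to conclude. The only cosmetic difference is in the last step: you invoke right-properness of $\widehat{\Delta}_\text{Kan}$ together with the Kan-fibration property of $\underline{\text{hom}}^\Delta(B,Z)\to\underline{\text{hom}}^\Delta(A,Z)$, whereas the paper phrases it via Ken Brown's Lemma (trivial fibrations are stable under pullback, hence weak equivalences between fibrant objects are stable under pullback); these are equivalent justifications.
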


\begin{proof}
Consider the following pushout
$$\xymatrix{A \ar[r] \ar[d] & C \ar[d] \\
B \ar[r] & D \rlap{ ,}}$$
with $A \to B$ a weak equivalence and $A \to C$ a cofibration.  We can reduce to the case where all objects are normal via the \emph{cube argument}.  We have an induced diagram
$$\xymatrix{\underline{\text{hom}}^\Delta(D,Z) \ar[r] \ar[d] & \underline{\text{hom}}^\Delta(B,Z) \ar[d] \\
\underline{\text{hom}}^\Delta(C,Z) \ar[r] & \underline{\text{hom}}^\Delta(A,Z) \rlap{ ,}}$$
which is a pullback for any $Z \in \widehat{\mathbb{A}}$.  If $Z$ were a $\mathbb{A}$-Kan complex, then all simplicial sets in the above diagram are also Kan complexes and the right vertical map is an equivalence of simplicial sets.  The left vertical map is an equivalence.  This follows as trivial fibrations are stable under pullbacks, and so by Ken Brown's Lemma \cite{MR0341469}, all weak equivalences between fibrant objects are stable under pullback.
\end{proof}

The following lemma holds from the construction of compatible Kan complex objects.

\begin{lemma}\label{quiladj1}
There is a Quillen adjunction
$$i_! : \widehat{\Delta}_\text{Kan} \rightleftarrows \widehat{\mathbb{A}}_\text{Kan} : i^\ast.$$
\end{lemma}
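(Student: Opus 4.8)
**Proof proposal for Lemma (the Quillen adjunction $i_!: \widehat{\Delta}_\text{Kan} \rightleftarrows \widehat{\mathbb{A}}_\text{Kan}: i^\ast$).**

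The plan is to verify that the adjoint pair $(i_!, i^\ast)$ coming from the faithful inclusion $i\colon \Delta \hookrightarrow \mathbb{A}$ of (AC2) is a Quillen adjunction, which by definition means checking that $i_!$ preserves cofibrations and trivial cofibrations. First I would recall that $i_!$ is the left Kan extension along $i$, so it is determined on representables by $i_!(\Delta[n]) = \mathbb{A}[i[n]]$, and that the right adjoint $i^\ast$ is restriction along $i$; both exist automatically since $\widehat{\mathbb{A}}$ is a presheaf category. Since $i_!$ is a left adjoint it preserves colimits, so to show it preserves cofibrations it suffices, by Lemma~\ref{normalmonocound}, to check that $i_!$ sends each generating cofibration $\partial\Delta[n] \to \Delta[n]$ of $\widehat{\Delta}_\text{Kan}$ to a normal monomorphism of $\widehat{\mathbb{A}}$. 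By the compatibility of skeleta (Lemma~\ref{isskele}) and the fact that $i$ is a morphism of EZ-categories preserving degree, $i_!(\partial\Delta[n]) = \partial\mathbb{A}[i[n]]$, so $i_!$ sends $\partial\Delta[n]\to\Delta[n]$ precisely to the boundary inclusion $\partial\mathbb{A}[i[n]]\to\mathbb{A}[i[n]]$, which is a (linear) normal monomorphism by definition; preservation of all cofibrations then follows because $i_!$ commutes with pushouts, transfinite composition and retracts.

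Next I would check preservation of trivial cofibrations. By Lemma~\ref{normalsuff} the trivial cofibrations of $\widehat{\mathbb{A}}_\text{Kan}$ are generated by those between countable normal objects, and dually it suffices to show $i_!$ takes the generating trivial cofibrations of $\widehat{\Delta}_\text{Kan}$ — which we may likewise take between normal (all simplicial sets are normal, cf.\ Remark~\ref{ifstrictthenmomo}) countable objects — to trivial cofibrations of $\widehat{\mathbb{A}}$. We already know such a map goes to a normal monomorphism, so the content is that it is a weak equivalence in the sense of Definition~\ref{weakeqdeff}. Here I would use the hom-tensor adjunction: for an $\mathbb{A}$-Kan complex $Z$ and the simplicial compatibility of (AC2), one has a natural isomorphism $\underline{\text{hom}}^\Delta_{\widehat{\mathbb{A}}}(i_!K, Z) \cong \underline{\text{hom}}^\Delta_{\widehat{\Delta}}(K, i^\ast Z)$, and $i^\ast Z$ is a Kan simplicial set (since $i$ carries the simplicial horn inclusions into anodyne extensions of $\widehat{\mathbb{A}}$, so $Z$ lifts against them and hence $i^\ast Z$ does). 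Thus if $K\to L$ is a trivial cofibration of simplicial sets between normal objects, then $\underline{\text{hom}}^\Delta(i_!L,Z)\to\underline{\text{hom}}^\Delta(i_!K,Z)$ is identified with $\underline{\text{hom}}^\Delta(L,i^\ast Z)\to\underline{\text{hom}}^\Delta(K,i^\ast Z)$, which is a trivial fibration of simplicial sets because $K\to L$ is a trivial cofibration and $i^\ast Z$ is Kan; hence $i_!(K\to L)$ is a weak equivalence, so a trivial cofibration.

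The main obstacle I anticipate is the identification $i_!(\partial\Delta[n]) = \partial\mathbb{A}[i[n]]$ and, more generally, verifying that $i_!$ commutes with the boundary/skeleton constructions — this is where (AC2)'s requirement that $i$ be a \emph{faithful inclusion of EZ-categories} (not merely a functor), together with Lemma~\ref{isskele}, is really used; one must check that a non-degenerate element in $\partial\Delta[n]$ has non-degenerate image and that no new faces are created in $\mathbb{A}$. The second delicate point is confirming that $i^\ast$ of an $\mathbb{A}$-Kan complex is a Kan complex, which reduces to showing $i$ sends each simplicial horn $\Lambda^k[n]\to\Delta[n]$ into the saturated class of $\mathbb{A}$-horns; this should follow from the explicit description of horn objects together with $i$ being a degree-preserving morphism of EZ-categories, but it deserves a careful sentence. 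Everything else is a formal consequence of the small-object generation results (Lemmas~\ref{normalmonocound} and~\ref{normalsuff}) already established in the excerpt.
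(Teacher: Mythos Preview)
Your proposal is correct and gives a considerably more detailed argument than the paper's own one-line proof, but it takes the dual route: the paper simply asserts that the right adjoint $i^\ast$ sends fibrations and trivial fibrations of $\widehat{\mathbb{A}}_\text{Kan}$ to fibrations and trivial fibrations of $\widehat{\Delta}_\text{Kan}$ ``by construction'', whereas you verify that the left adjoint $i_!$ preserves cofibrations and trivial cofibrations. By adjunction these two verifications are equivalent --- checking that $i^\ast$ preserves trivial fibrations amounts exactly to checking that $i_!$ sends simplicial boundary inclusions to normal monomorphisms, and checking that $i^\ast$ preserves fibrations amounts to checking that $i_!$ sends simplicial horn inclusions to trivial cofibrations --- so the two ``obstacles'' you flag (identifying $i_!(\partial\Delta[n])$ with $\partial\mathbb{A}[i[n]]$, and showing $i^\ast$ of an $\mathbb{A}$-Kan complex is Kan) are precisely the content hidden behind the paper's phrase ``by construction''. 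Your use of the enriched adjunction $\underline{\text{hom}}^\Delta_{\widehat{\mathbb{A}}}(i_!K,Z)\cong\underline{\text{hom}}^\Delta_{\widehat{\Delta}}(K,i^\ast Z)$, which follows from (AC2), is a clean way to handle the trivial-cofibration half and makes transparent why the model structure --- built from simplicial hom into $\mathbb{A}$-Kan objects --- is compatible with $i^\ast$. The paper's approach buys brevity; yours buys honesty about where the actual work lies.
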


\begin{proof}
The right adjoint $i^\ast$ sends fibrations (resp., trivial fibrations) of $\widehat{\mathbb{A}}_\text{Kan}$ to fibrations (resp., trivial fibrations) of $\widehat{\Delta}_\text{Kan}$ by construction, and therefore is part of a Quillen pair, for which $i_!$ is the left adjoint.
\end{proof}

Recall that we do not always have the assumption that $\widehat{\mathbb{A}}_\text{Kan}$ satisfies the pushout-product axiom, therefore we cannot hope for a closed monoidal structure in full generality.  However, what does hold is the fact that $\text{Ho}(\widehat{\mathbb{A}}_\text{Kan})$ is a simplicial category.  Using the tools presented in \cite[Proposition 5.4.4]{basicthesis} and \cite[Lemma 3.8.1]{MR3545944} we can formally prove the following, although the result is non-surprising due to the way we have constructed the classes of maps in our model structure.

\begin{lemma}
The category $\textup{Ho}(\widehat{\mathbb{A}}_\text{Kan})$ is enriched over $\textup{Ho}(\widehat{\Delta}_\text{Kan})$.
\end{lemma}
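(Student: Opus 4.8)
The plan is to promote the partial simplicial structure on $\widehat{\mathbb{A}}$ — the tensoring $X \square_\Delta K := X \square i_!(K)$, the cotensoring $X^K := X^{i_!(K)}$ and the simplicial mapping object $\underline{\text{hom}}^\Delta(-,-)$ — to a genuine \emph{simplicial model structure} on $\widehat{\mathbb{A}}_{\text{Kan}}$ (equivalently, a $\widehat{\Delta}_{\text{Kan}}$-model structure in the sense of \cite[Chapter 4]{hovey1999model}). The required enrichment of $\textup{Ho}(\widehat{\mathbb{A}}_{\text{Kan}})$ over $\textup{Ho}(\widehat{\Delta}_{\text{Kan}})$ — in fact together with a derived tensoring and cotensoring — then follows from the standard machinery for modules over a monoidal model category, with hom-object $\underline{\text{hom}}^\Delta(X', RY)$ for $X'\to X$ a normalisation and $Y\to RY$ an $\mathbb{A}$-Kan fibrant replacement (this is a Kan complex, since $\underline{\text{hom}}^\Delta$ of a normal object into an $\mathbb{A}$-Kan complex is a Kan complex). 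This is the path taken for dendroidal sets in \cite[Proposition 5.4.4]{basicthesis} and \cite[Lemma 3.8.1]{MR3545944}, and the only non-formal ingredients will be (AC2) and (AC3).

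First I would record that $(-\square_\Delta-,\ (-)^{(-)},\ \underline{\text{hom}}^\Delta(-,-))$ is an adjunction of two variables $\widehat{\mathbb{A}} \times \widehat{\Delta} \to \widehat{\mathbb{A}}$. On representables the natural bijections
\[ \text{Hom}_{\widehat{\mathbb{A}}}(X \square_\Delta K,\, Y) \;\cong\; \text{Hom}_{\widehat{\mathbb{A}}}(X,\, Y^K) \;\cong\; \text{Hom}_{\widehat{\Delta}}(K,\, \underline{\text{hom}}^\Delta(X,Y)) \]
are forced by the defining formulas for $\underline{\text{hom}}^\Delta_n$, $X\square K$ and $(Y^K)_a$ together with Yoneda, and they extend to arbitrary objects since $i_!$ and $\square$ preserve colimits separately in each variable. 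Unitality and associativity of the $\widehat{\Delta}$-action reduce to the compatibility isomorphism $i_!(K \times L) \simeq i_!(K) \square i_!(L)$ of (AC2), the symmetric promagmoidal structure on $\widehat{\mathbb{A}}$, and the identification of $i_!(\Delta[0])$ with the $\square$-unit; the same input also yields the identity $\underline{\text{hom}}^\Delta(X \square_\Delta K,\, Z) \cong \underline{\text{hom}}^\Delta(X,Z)^{K}$, which is what couples the $\widehat{\mathbb{A}}$-side and $\widehat{\Delta}$-side pushout-products.

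The heart of the proof is the pushout-product (Quillen bifunctor) axiom: for a normal monomorphism $i\colon A \to B$ of $\mathbb{A}$-sets and a monomorphism $j\colon K \to L$ of simplicial sets, the pushout-product map
\[ (A \square_\Delta L) \sqcup_{A \square_\Delta K} (B \square_\Delta K)\ \longrightarrow\ B \square_\Delta L \]
is a normal monomorphism, and a trivial cofibration whenever $j$ is anodyne or $i$ is a trivial cofibration. The key point is that $i_!(j)$ is always a \emph{linear} normal monomorphism: the monomorphisms of $\widehat{\Delta}$ form the saturated class generated by the boundary inclusions $\partial\Delta[n] \to \Delta[n]$, and since $i\colon \Delta \hookrightarrow \mathbb{A}$ is a faithful, degree-preserving morphism of EZ-categories, the cocontinuous functor $i_!$ carries these to the generating linear inclusions $\partial\mathbb{A}[i_![n]] \to \mathbb{A}[i_![n]]$ (and likewise carries the anodyne generators $\Lambda^k[n]\to\Delta[n]$ into the linear monomorphisms). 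Given this, the normal-monomorphism claim is immediate from (AC3). When $j$ is anodyne, the pushout-product is itself anodyne — this is precisely the computation already carried out in proving that $f^\ast\colon\underline{\text{hom}}^\Delta(B,Z)\to\underline{\text{hom}}^\Delta(A,Z)$ is a Kan fibration — hence a trivial cofibration by Lemma \ref{anodaretriv}. When $i$ is a trivial cofibration, I would first reduce via Lemmas \ref{lemma524} and \ref{normalsuff} to the case of a trivial cofibration between countable normal objects; then each corner of the square is normal, the pushout-product is a normal monomorphism between normal objects, and by Definition \ref{weakeqdeff} it suffices to check that applying $\underline{\text{hom}}^\Delta(-,Z)$ to it gives an equivalence for every $\mathbb{A}$-Kan complex $Z$. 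By the identity above, the resulting map of simplicial sets is the pullback-power of $j$ against $\underline{\text{hom}}^\Delta(i,Z)$; and $\underline{\text{hom}}^\Delta(i,Z)$ is a Kan fibration (by the Proposition above) which is a weak equivalence (by Definition \ref{weakeqdeff}), hence a trivial fibration, so its pullback-power against the monomorphism $j$ is again a trivial fibration of simplicial sets, in particular an equivalence.

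With the Quillen bifunctor axiom established, $\widehat{\mathbb{A}}_{\text{Kan}}$ is a simplicial model category, so $\textup{Ho}(\widehat{\mathbb{A}}_{\text{Kan}})$ is enriched — indeed tensored and cotensored — over $\textup{Ho}(\widehat{\Delta}_{\text{Kan}})$, with hom-objects the Kan complexes $\underline{\text{hom}}^\Delta(X', RY)$ described above and with composition and identities the derived images of the simplicial composition and units on $\underline{\text{hom}}^\Delta$. I expect the main obstacle to be this pushout-product axiom, and within it two genuinely new points: verifying that $i_!$ carries the simplicial boundary and horn inclusions to linear monomorphisms so that (AC3) can be invoked, and carrying out the reduction to normal objects when $i$ is a trivial cofibration. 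The case where $j$ is anodyne is, as indicated, already essentially contained in the preceding material.
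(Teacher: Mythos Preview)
Your proposal is correct and follows essentially the same route as the paper: the paper's proof consists entirely of a pointer to \cite[Proposition 5.4.4]{basicthesis} and \cite[Lemma 3.8.1]{MR3545944}, and you have unwound precisely that argument, establishing the two-variable Quillen adjunction (the simplicial pushout-product axiom) from (AC2) and (AC3) and then invoking the standard passage to an $\textup{Ho}(\widehat{\Delta}_{\text{Kan}})$-enrichment on the homotopy category. Your identification of the two genuinely non-formal points --- that $i_!$ sends simplicial boundary inclusions into the linear class so that (AC3) applies, and the reduction to normal objects in the trivial-cofibration case --- is exactly right, and the anodyne case is, as you say, already contained in the earlier Proposition and Lemma~\ref{anodaretriv}.
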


\begin{remark}
If $\mathbb{A}$ is a strict EZ-category then the model $\widehat{\mathbb{A}}_\text{Kan}$ is a Cisinski type model structure defined in \cite{MR2294028}.  This follows as for a strict EZ-category we have by Remark \ref{ifstrictthenmomo} that the normal monomorphisms are then just the monomorphisms.
\end{remark}

\section{Augmented Homotopical Algebraic Geometry}\label{sec:5}

\subsection{Local Model Structure on Augmented Presheaves}\label{auglocmodels}

We will now build a local model structure on $\mathbb{A} \textbf{-Pr} (\mathcal{C}) := {\widehat{\mathbb{A}}}^{\mathcal{C}^{op}}$ which reflects the local model structure on simplicial presheaves.  First of all, we consider the projective point-wise model structure on $\mathbb{A} \textbf{-Pr} (\mathcal{C})$, again denoted $\mathbb{A} \textbf{-Pr}_\text{proj} (\mathcal{C})$.  Note that as $\widehat{\mathbb{A}}$ is left-proper and combinatorial (for it is accessible and cofibrantly generated), we have that the projective point-wise model is also left-proper and combinatorial.  

We will now construct the analogue of hypercovers.  In the simplicial case we introduced hypercovers using the boundary construction, which was done as we can simply edit the necessary components to get a class of augmented hypercovers.

\begin{definition}\label{aughyper}
Let $\mathbb{A}$ be an augmentation category and $(\mathcal{C},\tau)$ a site. A map $f \colon X \to Y$ in $\mathbb{A} \textbf{-Pr}(\mathcal{C})$ is a \emph{hypercovering} if for all $a \in \mathbb{A}$ the map
$$X_a \to \text{Hom}_{\widehat{\mathbb{A}}}(\partial \mathbb{A}[a] , X) \times_{\text{Hom}_{\widehat{\mathbb{A}}}(\partial \mathbb{A}[a] , Y)} Y_a$$
is a $\tau$-covering in $\textup{Ho}(\mathbb{A} \textbf{-Pr}_\text{proj}(\mathcal{C}))$.  Note that by using Lemma \ref{coskrep}, this is equivalent to asking the same condition for the class of maps
$$X_a \to (\textup{cosk}_{d(a)-1}X)_r \times_{(\textup{cosk}_{d(a)-1}X)_r} Y_a.$$
\end{definition}

\begin{definition}
The \emph{local model structure} on $\mathbb{A} \textbf{-Pr} (\mathcal{C})$ is the left Bousfield localisation of $\mathbb{A} \textbf{-Pr}_\text{proj} (\mathcal{C})$ at the class of hypercoverings.  We shall denote this model structure $\mathbb{A} \textbf{-Pr}_\tau(\mathcal{C})$.
\end{definition}

\begin{definition}
For a site $(\mathcal{C}, \tau)$, we will call the homotopy category $\text{Ho}(\mathbb{A} \textbf{-Pr}_\tau(\mathcal{C}))$ the category of \emph{$\mathbb{A}$-augmented stacks on $\mathcal{C}$}. 
\end{definition}

\begin{lemma}
For $(\mathcal{C},\tau)$ a site, there is a Quillen adjunction
$$i_! : \textbf{sPr}_\tau(\mathcal{C}) \rightleftarrows \mathbb{A} \textbf{-Pr}_\tau(\mathcal{C}) : i^\ast .$$
\end{lemma}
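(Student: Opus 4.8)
The plan is to show that the adjunction $i_! \dashv i^\ast$ between the projective point-wise model structures $\textbf{sPr}_\text{proj}(\mathcal{C})$ and $\mathbb{A}\textbf{-Pr}_\text{proj}(\mathcal{C})$ is already Quillen, and then to check that it descends to the left Bousfield localisations by verifying that $i_!$ sends the localising class (simplicial hypercovers) into the localising class (augmented hypercovers). First I would observe that $i^\ast \colon \widehat{\mathbb{A}} \to \widehat{\Delta}$ is given objectwise by restriction along $i$, and by Lemma \ref{quiladj1} the pair $i_! : \widehat{\Delta}_\text{Kan} \rightleftarrows \widehat{\mathbb{A}}_\text{Kan} : i^\ast$ is Quillen. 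Since the projective model structures on presheaf categories are built pointwise and the fibrations and trivial fibrations are detected objectwise in $\mathcal{C}$, the pair $i_! : \textbf{sPr}_\text{proj}(\mathcal{C}) \rightleftarrows \mathbb{A}\textbf{-Pr}_\text{proj}(\mathcal{C}) : i^\ast$ is Quillen: $i^\ast$ preserves objectwise (trivial) fibrations because $i^\ast$ on the coefficient categories does so by Lemma \ref{quiladj1}.

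Next I would invoke the universal property of left Bousfield localisation: given a Quillen pair $F \dashv G$ with $F$ from $M$ to $N$, and a set $S$ of maps in $M$, the pair descends to a Quillen pair $L_S M \rightleftarrows L_T N$ provided the left derived functor $\mathbb{L}F$ carries $S$ into the $T$-local equivalences of $L_T N$. Here $M = \textbf{sPr}_\text{proj}(\mathcal{C})$, $N = \mathbb{A}\textbf{-Pr}_\text{proj}(\mathcal{C})$, $S$ is the class of simplicial hypercovers (Definition \ref{hyperdef}), and $T$ is the class of augmented hypercovers (Definition \ref{aughyper}). So the key step is: if $f \colon X \to Y$ is a simplicial hypercover, then $i_!(f)$ is a $T$-local equivalence; it suffices to show $i_!$ takes a simplicial hypercover to an augmented hypercover (up to the cofibrant replacement built into $\mathbb{L}i_!$, which is harmless since hypercovers can be taken between cofibrant objects, or one passes to the localisation where they become equivalences anyway).

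The main obstacle, and the heart of the argument, is comparing the two hypercover conditions through $i_!$. For a simplicial hypercover the defining maps involve $\partial\Delta[n] \to \Delta[n]$; for an augmented hypercover they involve $\partial\mathbb{A}[a] \to \mathbb{A}[a]$ ranging over all $a \in \mathbb{A}$. The point is that $i_!$ is a left adjoint preserving colimits and, crucially, by (AC2) it is monoidal for the relevant products, so it sends the simplicial boundary inclusion $\partial\Delta[n] \to \Delta[n]$ to the augmented boundary inclusion $\partial\mathbb{A}[i_![n]] \to \mathbb{A}[i_![n]]$ (using Lemma \ref{isskele}, $\partial\mathbb{A}[a] = \textup{sk}_{d(a)-1}\mathbb{A}[a]$, and the fact that $i_!$ commutes with skeleta on the image of $\Delta$). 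Thus the matching-object maps for $X$ transform correctly under $i_!$ for those $a$ lying in the image of $i$; for general $a \in \mathbb{A}$ one uses that $i_!(X)$ is a left Kan extension, so its value on $\partial\mathbb{A}[a]$ is a colimit of values on simplices, and a colimit of $\tau$-coverings over a connected diagram (or more carefully, the relevant relative matching map) remains a $\tau$-covering since $\pi_0^\tau$ is right exact and epimorphisms of sheaves are stable under the colimits in question. I would package this as a short lemma: $i_!$ sends hypercovers to hypercovers. Once that is in hand, the descent of the Quillen pair to $\textbf{sPr}_\tau(\mathcal{C}) \rightleftarrows \mathbb{A}\textbf{-Pr}_\tau(\mathcal{C})$ is formal, completing the proof.
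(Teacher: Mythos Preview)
Your approach is essentially the same as the paper's: both reduce the statement to the claim that $i_!$ carries simplicial hypercovers to augmented hypercovers, and then invoke the descent of a Quillen pair through left Bousfield localisation. The only organisational difference is that the paper passes to the \emph{injective} local models (checking that $i_!$ preserves pointwise cofibrations via Lemma~\ref{quiladj1}, then that it preserves hypercovers, then composing with the identity Quillen equivalence back to the projective models), whereas you work directly with the projective models by verifying that $i^\ast$ preserves objectwise (trivial) fibrations. Your route is arguably more direct, since the lemma is stated for the projective local structures; the paper's detour through the injective side is just to make the cofibration check easier. In either case the substantive step is identical, and the paper's own proof simply asserts ``if $f$ is a hypercover then $i_!f$ is an augmented hypercover'' without the additional justification you sketch for objects $a$ outside the image of $i$.
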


\begin{proof}
We will prove the statement for the respective injective local models and then we can compose with the identity functor to the projective local case, which will prove the result.  In the local injective models, the cofibrations are the point-wise cofibrations.  By Lemma \ref{quiladj1}, we know that $i_!$ sends cofibrations of $\widehat{\Delta}_\text{Kan}$ to cofibrations of $\widehat{\mathbb{A}}_\text{Kan}$.  Therefore, we now need only show that $i_!$ preserves the trivial cofibrations now.  This follows as if $f \colon X \to Y$ is a (non-augmented) hypercover, then  $i_!f \colon i_!X \to i_!Y$ is an augmented hypercover.
\end{proof}

\begin{remark}
We have introduced the above theory for the case when $\mathcal{C}$ is a simplicial site.  However for $\mathbb{A}$-presheaves, it would also make sense to allow \emph{$\mathbb{A}$-sites}.  That is, categories $\mathcal{C}$ enriched over $\widehat{\mathbb{A}}$ such that there is a Grothendieck topology on $\pi_0(\mathcal{C})$.
\end{remark}

\subsection{Local Weak Equivalences}

We now describe what the weak equivalences in the local model structure should look like.  We introduce the concept of augmented homotopy groups, resembling the simplicial case, using the augmented homotopy of Section \ref{aughomot}.  

\begin{definition}\label{homotopygrps}
Let $X \in \widehat{\mathbb{A}}$ be fibrant in $\widehat{\mathbb{A}}_\text{Kan}$, and $a \in \mathbb{A}$.  Denote by $\pi_a(X,x_0)$ the set of equivalence classes of morphisms $\alpha \colon \mathbb{A}[a] \to X$ which fit into the following commutative diagram in $\widehat{\mathbb{A}}$:
$$\xymatrix{\partial \mathbb{A}[a] \ar[r] \ar[d] & \mathbb{A}[0] \ar[d]^{x_0} \\
\mathbb{A}[a] \ar[r]_\alpha & X  \rlap{ ,}}$$
where the equivalence relation is given by homotopy equivalence of Definition \ref{aughomoequiv}.
\end{definition}

\begin{remark}
Although we will not pursue it here, one would hope that the object $\pi_a(X,x_0)$ is a group which is abelian for $d(a) \geq 2$.
\end{remark}

\begin{definition}
Let $(\mathcal{C}, \tau)$ be a site.  A map $f \colon \mathcal{F} \to \mathcal{F}'$ in $\mathbb{A} \textbf{-Pr}(\mathcal{C})$ is a \textit{local weak equivalence} if:
\begin{itemize}
\item The induced map $\pi_0^\tau \mathcal{F} \to \pi_0^\tau\mathcal{F}'$ is an isomorphism of sheaves.
\item Squares of following form are pullbacks after sheafification:
$$
\xymatrix{\pi_a \mathcal{F} \ar[r] \ar[d] &\ar[d] \pi_a \mathcal{F}' \\
\mathcal{F}_0 \ar[r] &  \mathcal{F}'_0 \rlap{ .}}
$$
\end{itemize}
\end{definition}

\begin{conjecture}\label{firstconjecture}
A map $f \colon \mathcal{F} \to \mathcal{F}'$ is a local weak equivalence if and only if it is a hypercover.
\end{conjecture}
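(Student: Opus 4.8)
The natural strategy is to prove the two implications separately, porting the simplicial arguments of Jardine \cite{jardinesimplicialpresheaves} and Dugger--Hollander--Isaksen \cite{MR2034012} to $\mathbb{A}\textbf{-Pr}_\tau(\mathcal{C})$, with the degree filtration of the EZ-category $\mathbb{A}$ and the coskeletal description of Lemma \ref{coskrep} playing the role of the simplicial skeletal filtration. Throughout, for $a \in \mathbb{A}$ write $M_aX := \text{Hom}_{\widehat{\mathbb{A}}}(\partial\mathbb{A}[a],X)$ for the matching object, so that by Definition \ref{aughyper} a hypercovering is precisely a map $f \colon \mathcal{F} \to \mathcal{F}'$ all of whose relative matching maps $\mathcal{F}_a \to M_a\mathcal{F} \times_{M_a\mathcal{F}'} \mathcal{F}'_a$ are $\tau$-coverings.

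For the implication ``hypercovering $\Rightarrow$ local weak equivalence'' the plan is to filter $f$ along the degree filtration of $\mathbb{A}$ as a transfinite composite of relative-coskeleton maps, each stage of which is controlled, degree by degree, by a relative matching map and hence by a $\tau$-covering. Because $\mathbb{A}\textbf{-Pr}_\tau(\mathcal{C})$ is by construction the left Bousfield localisation of the left-proper combinatorial model $\mathbb{A}\textbf{-Pr}_\text{proj}(\mathcal{C})$ at the hypercoverings, every hypercovering is already a weak equivalence of that model; what remains is to check the homotopy-sheaf criterion. I would do this locally: after passing to a Boolean localisation of the site (or arguing stalkwise when $\mathcal{C}$ has enough points) each $\tau$-covering acquires a section, so $f$ becomes a levelwise surjection whose fibres are governed by the $\mathbb{A}$-Kan homotopy theory of $\widehat{\mathbb{A}}_\text{Kan}$; from this one reads off isomorphisms on $\pi_0$ and on all the augmented homotopy groups $\pi_a$ of Definition \ref{homotopygrps}, together with the required pullback squares, and descends them back to $\mathcal{C}$ using that the inverse-image functor of a Boolean localisation is conservative on sheaves.

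For the converse some care is needed, since as literally stated it fails already for $\mathbb{A} = \Delta$ over the trivial topology: there $\Delta[0] \hookrightarrow \Delta[1]$ is a weak equivalence whose matching map at $[0]$ is the non-surjective inclusion $\Delta[0]_0 \hookrightarrow \Delta[1]_0$, so it is not a hypercovering. The statement I would actually aim for is the refined one, that the local weak equivalences are exactly the weak equivalences of $\mathbb{A}\textbf{-Pr}_\tau(\mathcal{C})$; equivalently, that a fibration of $\mathbb{A}\textbf{-Pr}_\tau(\mathcal{C})$ which is a local weak equivalence is a trivial fibration, and therefore has the right lifting property against the boundary inclusions of Lemma \ref{normalmonocound} and so is a hypercovering. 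Granting this, an arbitrary local weak equivalence factors as a trivial cofibration followed by a fibration that is again a local weak equivalence by the forward direction and two-out-of-three, hence a hypercovering, and this identifies the two classes.

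The main obstacle is the local step shared by both halves. It rests on the functor $i_!$, the $\mathbb{A}$-Kan fibrations of $\widehat{\mathbb{A}}_\text{Kan}$, and the homotopy-group functors $\pi_a$ all being compatible with the filtered colimits and inverse-image functors produced by a Boolean localisation of the site, and --- for the refined converse --- on the comparison theorem that the weak equivalences of the Bousfield localisation at the hypercoverings are detected by homotopy sheaves. This local homotopy theory for $\widehat{\mathbb{A}}$ is the technical heart of Jardine's simplicial argument and is not developed anywhere in the present paper; supplying it is precisely what would be required to upgrade this sketch to a proof, which is presumably why the statement is recorded only as a conjecture.
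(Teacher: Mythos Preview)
The paper does not prove this statement: it is explicitly recorded as a conjecture, with no argument given, and the subsequent corollary is stated conditionally on it. So there is no proof in the paper to compare your proposal against.

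That said, your proposal does more than sketch an approach: you correctly observe that the conjecture is \emph{false as literally stated}. Your counterexample is sound. Over the trivial topology with $\mathbb{A}=\Delta$, the inclusion $\Delta[0]\hookrightarrow\Delta[1]$ is a local weak equivalence (both sides are contractible), but its relative matching map in degree $0$ is the non-surjective map $\{\ast\}\to\{0,1\}$, so it is not a hypercovering in the sense of Definition~\ref{aughyper}. The same phenomenon occurs in the classical simplicial setting: Theorem~6.2 of \cite{MR2034012} (quoted in the paper) asserts only that the Bousfield localisation at hypercovers is Quillen equivalent to the local model structure, not that the class of local weak equivalences coincides with the class of hypercovers. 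Your proposed refinement---that local weak equivalences should coincide with the weak equivalences of $\mathbb{A}\textbf{-Pr}_\tau(\mathcal{C})$, so that any local trivial fibration is a hypercovering---is the correct shape of the statement and is exactly what would be needed for the corollary following the conjecture.

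Your outline for the forward direction (filter by degree, reduce to matching maps, pass to a Boolean localisation or stalks) is the standard Jardine--DHI strategy transported along the EZ-structure, and you are right that the missing ingredient is a local homotopy theory for $\widehat{\mathbb{A}}$: one needs that the augmented homotopy functors $\pi_a$ of Definition~\ref{homotopygrps} commute with the relevant filtered colimits and inverse-image functors, and that they detect weak equivalences after sheafification. None of this is established in the paper, which is consistent with the statement being left as a conjecture.
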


If the above conjecture is true, which seems likely due to the combinatorial nature of augmentation categories in relation to the simplex category, then we would get the following result, mirroring Theorem \ref{projlocmodel}.

\begin{corollary}
Let $(\mathcal{C}, \tau)$ be a site.  There exists a cofibrantly generated model structure on the category $\mathbb{A} \textbf{-Pr}(\mathcal{C})$ where a map $f \colon \mathcal{F} \to \mathcal{F}'$ is a:
\begin{enumerate}
\item Weak equivalence if it is a local weak equivalence.
\item Fibration if it has the RLP with respect to the trivial cofibrations.
\item Cofibration if it is a cofibration in the point-wise projective model.
\end{enumerate}
Moreover this model is Quillen equivalent to the local model $\mathbb{A} \textbf{-Pr}_\tau(\mathcal{C})$.
\end{corollary}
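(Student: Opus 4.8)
The plan is to follow the same localisation machinery used for simplicial presheaves, since the local model $\mathbb{A}\textbf{-Pr}_\tau(\mathcal{C})$ was defined precisely as a left Bousfield localisation of the left-proper combinatorial model $\mathbb{A}\textbf{-Pr}_\text{proj}(\mathcal{C})$ at the hypercovers, and the claimed corollary is the analogue of the pair (Theorem~\ref{projlocmodel}, Theorem~\cite[Theorem 6.2]{MR2034012}) adapted to $\widehat{\mathbb{A}}$. Concretely, I would first invoke Conjecture~\ref{firstconjecture} to identify the class of local weak equivalences with the class of hypercovers (in the sense of Definition~\ref{aughyper}). Granting this identification, the model structure in the statement is, by construction, the Bousfield localisation $\mathbb{A}\textbf{-Pr}_\tau(\mathcal{C})$ itself when we use the \emph{injective} point-wise model as the starting point: its cofibrations are the point-wise monomorphisms, its weak equivalences are the local weak equivalences, and its fibrations are defined by the right lifting property against trivial cofibrations. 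This is exactly the injective local model, whose existence follows by the same argument as Theorem~\ref{projlocmodel} together with the general theory of left Bousfield localisation of left-proper combinatorial model categories (which applies because $\widehat{\mathbb{A}}_\text{Kan}$ is left-proper and combinatorial, hence so is the point-wise model on $\mathbb{A}\textbf{-Pr}(\mathcal{C})$).

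The second step is to pass from the injective presentation to the projective presentation stated in the corollary. Here I would use the standard fact that the identity functor gives a Quillen equivalence $\mathbb{A}\textbf{-Pr}_{\text{proj},\tau}(\mathcal{C}) \rightleftarrows \mathbb{A}\textbf{-Pr}_{\text{inj},\tau}(\mathcal{C})$: both have the same weak equivalences (the local weak equivalences), and every projective cofibration is a point-wise cofibration (indeed a point-wise normal monomorphism, using Corollary~\ref{normtonorm} once objects are suitably cofibrant), so the identity is a left Quillen functor with the identity as a Quillen inverse, exactly as in the simplicial setting. This simultaneously establishes the last sentence of the corollary (Quillen equivalence with $\mathbb{A}\textbf{-Pr}_\tau(\mathcal{C})$) and confirms that the three classes of maps described — local weak equivalences, maps with the RLP against trivial cofibrations, and projective cofibrations — do assemble into a model structure, since left Bousfield localisations of left-proper combinatorial model categories are again left-proper, combinatorial, and in particular cofibrantly generated.

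The only genuinely load-bearing input is Conjecture~\ref{firstconjecture}: without the identification of local weak equivalences with hypercovers, one does not know that the Bousfield-localised model $\mathbb{A}\textbf{-Pr}_\tau(\mathcal{C})$ has the advertised description of its weak equivalences as local weak equivalences in the sense of augmented homotopy sheaves. The rest is bookkeeping: descent-theoretic arguments identifying $\pi_0^\tau$-isomorphisms and the pullback-after-sheafification condition with the hypercover-locality condition, transported verbatim from Jardine's proof \cite{jardinesimplicialpresheaves} and \cite{MR2034012} with $\Delta$ replaced by $\mathbb{A}$ and the simplicial boundaries $\partial\Delta[n]$ replaced by $\partial\mathbb{A}[a]$. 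I would therefore state the corollary as conditional on Conjecture~\ref{firstconjecture} (as the excerpt does, writing ``If the above conjecture is true''), and present the proof as: (i) the injective local model exists by Bousfield localisation at hypercovers; (ii) under the conjecture its weak equivalences are the local weak equivalences; (iii) the identity functor relates it to the projective local model by a Quillen equivalence, giving the projective description in the statement. The main obstacle, to be flagged rather than resolved, is step (ii), i.e.\ the conjecture itself, whose proof would require developing enough of the theory of augmented homotopy sheaves to run the Illusie-type connectivity/descent estimates that make the simplicial case work.
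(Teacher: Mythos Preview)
The paper does not actually give a proof of this corollary. It is stated purely as a conditional consequence of Conjecture~\ref{firstconjecture}, with the single remark that it would follow by ``mirroring Theorem~\ref{projlocmodel}''. So there is nothing substantive to compare your argument against: your proposal is already more detailed than what the paper offers, and you have correctly identified that the statement is conditional on the conjecture and that the load-bearing step is precisely that identification of local weak equivalences with the hypercover-local equivalences.

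One simplification is available. Your route goes through the injective local model and then transfers to the projective side via the identity Quillen equivalence. But $\mathbb{A}\textbf{-Pr}_\tau(\mathcal{C})$ is \emph{defined} in the paper as the left Bousfield localisation of $\mathbb{A}\textbf{-Pr}_\text{proj}(\mathcal{C})$ at the hypercovers, so its cofibrations are already the projective cofibrations and its fibrations are already characterised by the RLP against trivial cofibrations. Granting the conjecture (interpreted, as you do, as saying that the weak equivalences of this localisation coincide with the local weak equivalences in the homotopy-sheaf sense), the model described in the corollary \emph{is} $\mathbb{A}\textbf{-Pr}_\tau(\mathcal{C})$ on the nose, and the final ``Quillen equivalent'' clause is trivially the identity. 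The detour through the injective model is therefore unnecessary, though harmless. Note also that the conjecture as literally stated (``local weak equivalence $\Leftrightarrow$ hypercover'') is too strong even simplicially; you are right to read it as ``local weak equivalences are the weak equivalences of the hypercover localisation'', which is what is actually needed.
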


\subsection{Enriched Structure}

In this section we will show that the local model structure $\mathbb{A} \textbf{-Pr}_\tau(\mathcal{C})$ is enriched over the local model structure on simplicial presheaves, and is therefore a simplicial category.  In the case that $\widehat{\mathbb{A}}$ is a closed monoidal category we can go further and show that $\mathbb{A} \textbf{-Pr}_\tau(\mathcal{C})$ is in fact a closed monoidal model category.  To do this, we will use a trick used in \cite[\S 3.6]{MR2137288}, which considers instead the local injective model structure.  Denote by $\mathbb{A} \textbf{-Pr}_\textup{inj}(\mathcal{C})$ the injective model structure.  From the properties of $\widehat{\mathbb{A}}_\text{Kan}$ we have that this model is left-proper and cofibrantly generated, and therefore a left Bousfield localisation exists.  We localise at the set of hypercovers once again and retrieve the model category $\mathbb{A} \textbf{-Pr}_{\text{inj,}\tau}(\mathcal{C})$.  Clearly we have an equivalence of categories $\text{Ho}(\mathbb{A} \textbf{-Pr}_{\text{inj,}\tau}(\mathcal{C})) \simeq \text{Ho}(\mathbb{A} \textbf{-Pr}_{\tau}(\mathcal{C}))$ as the weak equivalences in each model are the same.  As the cofibrations in the local injective model are simply the normal monomorphisms, and finite products preserve local weak equivalences, anything we have proved regarding the enriched structure of $\widehat{\mathbb{A}}_\text{Kan}$ can be carried over to this setting.  Using this justification, we state the two following lemmas and corresponding definitions.

\begin{lemma}
The category $\textup{Ho}(\mathbb{A} \textbf{-Pr}_{\tau}(\mathcal{C}))$ is enriched over $\textup{Ho}(\textbf{sPr}_\tau(\mathcal{C}))$, and subsequently over $\textup{Ho}(\widehat{\Delta}_\text{Kan})$.  In particular, $\mathbb{A} \textbf{-Pr}_{\tau}(\mathcal{C})$ is a simplicial model category.
\end{lemma}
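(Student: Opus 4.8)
The plan is to reduce the statement to the enriched structure we already understand on $\widehat{\mathbb{A}}_\text{Kan}$, and to transport it through the Bousfield localisations. First I would set up the simplicial cotensor and tensor on $\mathbb{A}\textbf{-Pr}(\mathcal{C})$ objectwise from the weak simplicial enrichment of $\widehat{\mathbb{A}}$ recalled in Section~\ref{sec:4}: for $\mathcal{F} \in \mathbb{A}\textbf{-Pr}(\mathcal{C})$ and $K \in \widehat{\Delta}$, set $(\mathcal{F} \square K)(c) = \mathcal{F}(c) \square i_!(K)$, and $(\mathcal{F}^K)(c) = \mathcal{F}(c)^K$, with the simplicial mapping object $\underline{\text{hom}}^\Delta(\mathcal{F},\mathcal{G})$ defined as the end over $\mathcal{C}$ of the objectwise simplicial mapping objects. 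One checks the adjunction isomorphisms $\text{Hom}(\mathcal{F} \square K, \mathcal{G}) \cong \text{Hom}(K, \underline{\text{hom}}^\Delta(\mathcal{F},\mathcal{G})) \cong \text{Hom}(\mathcal{F}, \mathcal{G}^K)$ pointwise using (AC2), exactly as in the weak enrichment of $\widehat{\mathbb{A}}$.

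Next I would establish the simplicial model category axiom (SM7) for the injective local model $\mathbb{A}\textbf{-Pr}_{\text{inj},\tau}(\mathcal{C})$, following the trick of \cite[\S 3.6]{MR2137288} already advertised in the text: it suffices to show that for a normal monomorphism $f\colon \mathcal{F} \to \mathcal{G}$ in $\mathbb{A}\textbf{-Pr}(\mathcal{C})$ (i.e.\ an objectwise normal monomorphism, since injective local cofibrations are pointwise normal monomorphisms) and a monomorphism $K \to L$ of simplicial sets, the pushout-product $\mathcal{F} \square L \sqcup_{\mathcal{F} \square K} \mathcal{G} \square K \to \mathcal{G} \square L$ is a normal monomorphism, and is a local weak equivalence if $f$ is or if $K \to L$ is a trivial cofibration. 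The normal-monomorphism part is immediate from (AC3) applied objectwise (with $i_!(K) \to i_!(L)$ linear), together with Lemma~\ref{normalmonocound}. The weak-equivalence part when $K \to L$ is anodyne again reduces objectwise to (AC3) and Lemma~\ref{anodaretriv}; the part when $f$ is a trivial cofibration uses that finite products (hence $(-)\square L$) preserve local weak equivalences — which is the remark the text flags, provable via left-properness of $\mathbb{A}\textbf{-Pr}_\text{proj}(\mathcal{C})$ and Ken Brown's lemma exactly as in the left-properness proposition for $\widehat{\mathbb{A}}_\text{Kan}$. This hands us that $\mathbb{A}\textbf{-Pr}_{\text{inj},\tau}(\mathcal{C})$ is a simplicial model category, and since $i_!\colon \widehat{\Delta}_\text{Kan} \to \widehat{\mathbb{A}}_\text{Kan}$ is monoidal with respect to $\times$ and $\square$ by (AC2), the enrichment upgrades to an enrichment over $\textbf{sPr}_{\text{inj},\tau}(\mathcal{C})$ compatibly: $\underline{\text{hom}}^\Delta$ promotes to a $\textbf{sPr}$-valued mapping object $\mathbb{R}\underline{\text{Hom}}(-,-)$ by taking the presheaf $c \mapsto \underline{\text{hom}}^\Delta(c \times \mathcal{F}, R\mathcal{G})$, and one verifies this descends to homotopy categories using the localised pushout-product axiom.

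Then I would pass to homotopy categories and to the projective local model. Since $\mathbb{A}\textbf{-Pr}_{\text{inj},\tau}(\mathcal{C})$ and $\mathbb{A}\textbf{-Pr}_{\tau}(\mathcal{C})$ have the same weak equivalences, the identity functors are a Quillen equivalence, so $\text{Ho}(\mathbb{A}\textbf{-Pr}_{\tau}(\mathcal{C})) \simeq \text{Ho}(\mathbb{A}\textbf{-Pr}_{\text{inj},\tau}(\mathcal{C}))$ as categories, and this equivalence is compatible with the derived simplicial hom; the enrichment over $\text{Ho}(\textbf{sPr}_\tau(\mathcal{C}))$ therefore transports across, and composing with the Quillen pair of the corollary (or simply with $i^\ast$ and the global sections) gives the further enrichment over $\text{Ho}(\widehat{\Delta}_\text{Kan})$. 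The final clause — that $\mathbb{A}\textbf{-Pr}_{\tau}(\mathcal{C})$ is itself a simplicial model category, not merely its homotopy category — follows because the projective local model is left-proper combinatorial and the tensoring/cotensoring constructed above are Quillen bifunctors: one deduces (SM7) for the projective local model from that for the injective local model by noting projective (local) cofibrations are in particular injective (local) cofibrations, and that the two models share fibrant objects up to the localisation, so the RLP characterisations match.

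The main obstacle I anticipate is the weak-equivalence half of the localised pushout-product axiom — specifically showing that $(-) \square L$ preserves \emph{local} weak equivalences (not just objectwise ones), since $\square$ is only a \emph{weak} enrichment and $\widehat{\mathbb{A}}_\text{Kan}$ need not satisfy the pushout-product axiom in general. The way around this is to exploit that for the \emph{linear} boundary and horn inclusions coming from $i_!$, axiom (AC3) does give the relevant (anodyne) behaviour, and that local weak equivalences are detected after sheafification of augmented homotopy groups, which are built from maps out of $\mathbb{A}[a]$ and behave well under the simplicial tensor by the compatibility in (AC2); combined with left-properness and Ken Brown's lemma this is enough, but the bookkeeping of which inclusions must be linear is the delicate point, and it is exactly the place where the rigidity of Definition~\ref{augdef} is being used.
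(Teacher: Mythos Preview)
Your proposal is correct and follows essentially the same approach as the paper. The paper does not supply a formal proof for this lemma; it merely sketches the argument in the paragraph immediately preceding it (pass to the injective local model, use that its cofibrations are the objectwise normal monomorphisms and that finite products preserve local weak equivalences, then carry over the enriched structure already established for $\widehat{\mathbb{A}}_\text{Kan}$), and your proposal is a careful unpacking of precisely that sketch, including the (AC3)-based verification of (SM7) and the transport along the injective/projective Quillen equivalence.
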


\begin{definition}
The simplicial presheaf enrichment in $\textup{Ho}(\mathbb{A} \textbf{-Pr}_{\tau}(\mathcal{C}))$ will be denoted
$$\mathbb{A} \text{-} \mathbb{R}_\tau^\Delta \underline{\mathcal{H}\text{om}}(-,-) \colon \textup{Ho}(\mathbb{A} \textbf{-Pr}_{\tau}(\mathcal{C})) \times \textup{Ho}(\mathbb{A} \textbf{-Pr}_{\tau}(\mathcal{C}))  \to  \textup{Ho}(\textbf{sPr}_\tau(\mathcal{C})).$$
The corresponding simplicial enrichment will be denoted
$$\mathbb{A} \text{-} \mathbb{R}_\tau^\Delta \underline{\text{Hom}}(-,-) \colon \textup{Ho}(\mathbb{A} \textbf{-Pr}_{\tau}(\mathcal{C})) \times \textup{Ho}(\mathbb{A} \textbf{-Pr}_{\tau}(\mathcal{C}))  \to \textup{Ho}(\widehat{\Delta}_\text{Kan}).$$
\end{definition}

\begin{lemma}
If the presheaf category $\widehat{\mathbb{A}}$ satisfies the pushout-product axiom, then the category $\textup{Ho}(\mathbb{A} \textbf{-Pr}_{\tau}(\mathcal{C}))$ is a closed monoidal category, and is subsequently enriched over $\text{Ho}(\widehat{\mathbb{A}}_\text{Kan})$.  In particular, $\mathbb{A} \textbf{-Pr}_{\tau}(\mathcal{C})$ is a  $\mathbb{A}$-model category.

\end{lemma}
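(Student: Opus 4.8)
The plan is to transport the required structure across the Quillen equivalence $\mathbb{A}\textbf{-Pr}_{\textup{inj},\tau}(\mathcal{C}) \simeq \mathbb{A}\textbf{-Pr}_{\tau}(\mathcal{C})$ and to argue entirely inside the injective model, exactly as in \cite[\S 3.6]{MR2137288}. It suffices to show that $\mathbb{A}\textbf{-Pr}_{\textup{inj},\tau}(\mathcal{C})$, equipped with the objectwise tensor product $(\mathcal{F}\square\mathcal{G})(c):=\mathcal{F}(c)\square\mathcal{G}(c)$, is a symmetric monoidal model category in the sense of Definition~\ref{monmodcat}. Once this is known: its homotopy category is closed symmetric monoidal by \cite{hovey1999model}, and this structure passes across the Quillen equivalence to $\textup{Ho}(\mathbb{A}\textbf{-Pr}_{\tau}(\mathcal{C}))$. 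Moreover the constant-presheaf functor $\widehat{\mathbb{A}}\to\mathbb{A}\textbf{-Pr}(\mathcal{C})$ is strong monoidal and is left Quillen into the injective model (it sends normal monomorphisms to objectwise normal monomorphisms, and weak equivalences of $\widehat{\mathbb{A}}_\text{Kan}$ to objectwise, hence local, weak equivalences), so the objectwise self-action of $\mathbb{A}\textbf{-Pr}_{\textup{inj},\tau}(\mathcal{C})$ restricts along it to a left Quillen module action of $\widehat{\mathbb{A}}_\text{Kan}$; this is precisely the assertion that $\mathbb{A}\textbf{-Pr}_{\tau}(\mathcal{C})$ is an $\mathbb{A}$-model category, and the enrichment of $\textup{Ho}(\mathbb{A}\textbf{-Pr}_{\tau}(\mathcal{C}))$ over $\textup{Ho}(\widehat{\mathbb{A}}_\text{Kan})$ then follows formally.

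Next I would check that the \emph{unlocalised} injective model $\mathbb{A}\textbf{-Pr}_{\textup{inj}}(\mathcal{C})$ is already a symmetric monoidal model category. Closedness of $\square$ on $\mathbb{A}\textbf{-Pr}(\mathcal{C})$ follows from closedness of $(\widehat{\mathbb{A}},\square,I_\square)$ (part of the hypothesis, together with the symmetric promagmoidal structure of (AC1)) via the standard end formula for the internal hom of a diagram category. The pushout-product and unit axioms of Definition~\ref{monmodcat} are detected objectwise, since a map in $\mathbb{A}\textbf{-Pr}_{\textup{inj}}(\mathcal{C})$ is a cofibration (resp.\ trivial cofibration) exactly when it is one objectwise in $\widehat{\mathbb{A}}_\text{Kan}$; they therefore reduce immediately to the hypothesis that $\widehat{\mathbb{A}}_\text{Kan}$ satisfies the pushout-product axiom (if $I_\square$ is normal the unit axiom is automatic, and otherwise one runs the same objectwise reduction on a cofibrant resolution of $I_\square$). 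Together with the already-noted facts that $\mathbb{A}\textbf{-Pr}_{\textup{inj}}(\mathcal{C})$ is left proper and combinatorial, this supplies a well-behaved symmetric monoidal model category to which the theory of monoidal left Bousfield localisation applies.

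The remaining, and I expect the hardest, step is to show that left Bousfield localisation at the set of hypercovers preserves the monoidal structure. The relevant criterion (cf.\ \cite[\S 3.6]{MR2137288}) states that, for a left proper combinatorial symmetric monoidal model category, localising at a set $S$ of maps between cofibrant objects again yields a symmetric monoidal model category provided $\mathcal{X}\square s$ is an $S$-local equivalence for every cofibrant $\mathcal{X}$ and $s\in S$. So I must verify: if $h\colon\mathcal{F}\to\mathcal{F}'$ is a hypercover, which after normalisation and by Corollary~\ref{normtonorm} may be taken between normal objects, and $\mathcal{X}$ is normal, then $\mathcal{X}\square h$ is again a hypercover. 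Writing $\mathcal{X}$ as a cellular object built from boundary inclusions $\partial\mathbb{A}[a]\to\mathbb{A}[a]$ (Lemma~\ref{normalmonocound}), one reduces via the pushout-product calculus and left properness to the case $\mathcal{X}=\mathbb{A}[a]$, where one checks directly that $\mathbb{A}[a]\square h$ satisfies the matching-object condition of Definition~\ref{aughyper}, using (AC3) to control the interaction of $\square$ with the boundary inclusions and the stability of $\tau$-coverings in $\textup{Ho}(\mathbb{A}\textbf{-Pr}_{\textup{proj}}(\mathcal{C}))$ under base change and composition. Granting this, the criterion applies, $\mathbb{A}\textbf{-Pr}_{\textup{inj},\tau}(\mathcal{C})$ is a symmetric monoidal model category, and the reductions of the first paragraph finish the proof. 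The delicate issue throughout is precisely the interplay between the (generally non-cartesian) tensor $\square$ and the matching objects $\textup{Hom}_{\widehat{\mathbb{A}}}(\partial\mathbb{A}[a],-)$ cutting out hypercovers; everything else is the routine objectwise transfer of a monoidal model structure to a diagram category.
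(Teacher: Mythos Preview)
Your proposal follows the same high-level strategy as the paper: pass to the local injective model $\mathbb{A}\textbf{-Pr}_{\textup{inj},\tau}(\mathcal{C})$ and invoke the trick of \cite[\S 3.6]{MR2137288}. The paper, however, does not give a proof of this lemma beyond the paragraph preceding it; its entire justification is that ``the cofibrations in the local injective model are simply the normal monomorphisms, and finite products preserve local weak equivalences,'' so that everything proved about the enriched structure of $\widehat{\mathbb{A}}_\text{Kan}$ ``can be carried over to this setting.'' Your treatment is therefore considerably more detailed than the paper's own: you explicitly separate out the unlocalised step, isolate the monoidal Bousfield localisation criterion, and identify the genuine technical point (stability of hypercovers under $\square$ with cofibrant objects) that the paper elides.

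One small observation in your favour: the paper's phrase ``finite products preserve local weak equivalences'' is only directly relevant when $\square$ is cartesian (as for crossed simplicial groups), whereas in general $\square$ need not be the product; your argument correctly works with the abstract tensor $\square$ throughout and uses (AC3) rather than any cartesian assumption. The one place where your sketch remains a sketch is exactly the verification that $\mathbb{A}[a]\square h$ is again a hypercover, which you flag as the delicate step; the paper does not address this step at all, so you have already gone further than the source.
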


\begin{definition}
The internal-hom in $\textup{Ho}(\mathbb{A} \textbf{-Pr}_{\tau}(\mathcal{C}))$ will be denoted
$$\mathbb{A} \text{-} \mathbb{R}_\tau \underline{\mathcal{H}\text{om}}(-,-) \colon \textup{Ho}(\mathbb{A} \textbf{-Pr}_{\tau}(\mathcal{C})) \times \textup{Ho}(\mathbb{A} \textbf{-Pr}_{\tau}(\mathcal{C}))  \to \textup{Ho}(\mathbb{A} \textbf{-Pr}_{\tau}(\mathcal{C})).$$
The corresponding augmented enrichment will be denoted
$$\mathbb{A} \text{-} \mathbb{R}_\tau \underline{\text{Hom}}(-,-) \colon \textup{Ho}(\mathbb{A} \textbf{-Pr}_{\tau}(\mathcal{C})) \times \textup{Ho}(\mathbb{A} \textbf{-Pr}_{\tau}(\mathcal{C}))  \to \textup{Ho}(\widehat{\mathbb{A}}_\text{Kan}).$$
\end{definition}

\subsection{Augmented Derived Stacks}

We shall now use the model structures developed in Section \ref{auglocmodels} to discuss the theory of augmented stacks, which will be the main objects of interest in this article.  Following the simplicial setting, we make the following definition.

\begin{definition}
Let $(\mathcal{C},\tau)$ be a site, and $\mathbb{A}$ an augmentation category.  
\begin{itemize}
\item The category $\textup{Ho}(\mathbb{A} \textbf{-Pr}_{\tau}(\mathcal{C}))$ will be called the \emph{category of $\mathbb{A}$-augmented $\infty$-stacks}. 
\item An object $F \in \textup{Ho}(\mathbb{A} \textbf{-Pr}_{\tau}(\mathcal{C}))$ will be referred to as a \emph{$\mathbb{A}$-augmented $\infty$-stack}.  
\item For two augmented stacks $F,G \in \textup{Ho}(\mathbb{A} \textbf{-Pr}_{\tau}(\mathcal{C}))$, we will call the object $\mathbb{A} \text{-} \mathbb{R}_\tau \underline{\mathcal{H}\text{om}}(-,-)$ (resp., $\mathbb{A} \text{-} \mathbb{R}_\tau^\Delta \underline{\mathcal{H}\text{om}}(-,-)$) the \emph{$\mathbb{A}$-mapping stack} (resp., mapping stack) from $F$ to $G$. Note that the $\mathbb{A}$-mapping stack does not exist in full generality.
\end{itemize}
\end{definition}

\subsection{Augmented Geometric Derived Stacks}

We now introduce augmented derived geometric stacks through a modified $n$-hypergroupoid construction.  Of course, all that we do in this setting can be reformulated for when the site in question is not $\textbf{dAff}$, but any homotopical algebraic geometric context.

\begin{definition}
A derived Artin (resp., derived Deligne-Mumford) $(\mathbb{A},n)$-hypergroupoid is an object $X \in \textbf{dAff}^{\mathbb{A}^{op}}$ such that the maps
$$X_a = \text{Hom}_{\widehat{\mathbb{A}}}(\mathbb{A}[a],X) \to \text{Hom}_{\widehat{\mathbb{A}}}(\Lambda^f\mathbb{A}[a],X)$$
are smooth (resp., \'{e}tale) surjections for all objects $a$ and face maps $f$, and  are isomorphisms for all $a$ with $d(a) > n$.  Note that a derived $(\mathbb{A},n)$-hypergroupoid is an augmented $\mathbb{A}$-Kan complex.  Moreover, using Lemma \ref{coskrep} we see that a $(\mathbb{A},n)$-hypergroupoid is $(n+1)$-coskeletal.
\end{definition}

\begin{definition}
A derived Artin (resp., derived Deligne-Mumford) trivial $(\mathbb{A},n)$-hypergroupoid is a map $f \colon X \to Y$ in $\textbf{dAff}^{\mathbb{A}^{op}}$ such that the maps
$$X_a \to \text{Hom}_{\widehat{\mathbb{A}}}(\partial \mathbb{A}[a] , X) \times_{\text{Hom}_{\widehat{\mathbb{A}}}(\partial \mathbb{A}[a] , Y)} Y_a$$
are smooth (resp.,  \'{e}tale) surjections for all $a,f$ and are isomorphisms for all $a$ with $d(a) > n$.  In particular, a derived Artin trivial $(\mathbb{A},n)$-hypergroupoid is a trivial fibration in $\widehat{\mathbb{A}}_\text{Kan}$.
\end{definition}

\begin{lemma}
Let $f \colon X \to Y$ be a trivial $(\mathbb{A},n)$-hypergroupoid then we have that $X = Y \times_{\textup{cosk}_{n-1} Y} \textup{cosk}_{n-1}X$.
\end{lemma}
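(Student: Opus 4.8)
The plan is to reduce the statement to the analogous simplicial fact (Lemma~\ref{cosktriv}), exploiting the fact that everything in sight is controlled objectwise by the augmentation-category analogue of coskeleta established in Lemma~\ref{coskrep}. First I would unwind the definition of a trivial $(\mathbb{A},n)$-hypergroupoid: the condition says that for every $a \in \mathbb{A}$ the map
$$X_a \to \text{Hom}_{\widehat{\mathbb{A}}}(\partial \mathbb{A}[a], X) \times_{\text{Hom}_{\widehat{\mathbb{A}}}(\partial \mathbb{A}[a], Y)} Y_a$$
is an isomorphism whenever $d(a) > n$ (and a smooth/étale surjection in general). Using $\partial \mathbb{A}[a] = \textup{sk}_{d(a)-1}\mathbb{A}[a]$ (Lemma~\ref{isskele}) together with the adjunction $(\textup{sk}_m \dashv \textup{cosk}_m)$, the fibre product appearing on the right is precisely $(\textup{cosk}_{n-1}X)_a \times_{(\textup{cosk}_{n-1}Y)_a} Y_a$ for all $a$ with $d(a) \geq n+1$; this is the same rewriting already recorded in Definition~\ref{aughyper}. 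So the isomorphism condition becomes: the canonical map $X_a \to (Y \times_{\textup{cosk}_{n-1}Y} \textup{cosk}_{n-1}X)_a$ is an isomorphism for every $a$ with $d(a) > n$.

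Next I would verify the claim in the remaining degrees, i.e.\ for $a$ with $d(a) \leq n$. Here the point is that both $X$ and the pullback $P := Y \times_{\textup{cosk}_{n-1}Y} \textup{cosk}_{n-1}X$ are built from data in degrees $< n$, hence are determined there. Concretely, $\textup{cosk}_{n-1}$ depends only on the truncation $(-)_{\leq n-1}$, so $P_a$ for $d(a) \leq n-1$ is just $Y_a \times_{Y_a} X_a = X_a$, giving the identity in those degrees; and the canonical map $X \to P$ is an isomorphism in degrees $\leq n-1$ by construction. For degree exactly $n$ one uses that $\textup{cosk}_{n-1}X$ agrees with $X$ only up to level $n-1$, but the fibre product over $\textup{cosk}_{n-1}Y$ re-imposes the degree-$n$ data of $X$: the map $X_a \to (\textup{cosk}_{n-1}X)_a = \text{Hom}(\partial\mathbb{A}[a],X)$ composed with the splitting coming from $Y_a$ reconstructs $X_a$, since the defining square of $P$ forces $P_a = Y_a \times_{\text{Hom}(\partial\mathbb{A}[a],Y)} \text{Hom}(\partial\mathbb{A}[a],X)$, which equals $X_a$ exactly because of the isomorphism condition at $d(a)=n$ is... wait — this is precisely where I must be careful: the isomorphism condition is only assumed for $d(a) > n$, so at $d(a) = n$ I should instead argue directly that the natural transformation $X \to P$ is an isomorphism by combining the low-degree identification with the universal property, mirroring the simplicial proof of Lemma~\ref{cosktriv} verbatim.

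The cleanest route, and the one I would actually write, is therefore: show $X$ and $P$ are both $(n+1)$-coskeletal (for $X$ this is the last sentence of the definition of a trivial $(\mathbb{A},n)$-hypergroupoid, via Lemma~\ref{coskrep}; for $P$ it follows since coskeleta of coskeletal-type objects and pullbacks of coskeletal objects remain coskeletal, using that $\textup{cosk}_{n-1}$ lands in $(n-1)$-coskeletal objects), then check that the canonical comparison $X \to P$ is an isomorphism on truncations $(-)_{\leq n}$, and invoke Lemma~\ref{coskrep}(1) to conclude it is a global isomorphism. The comparison on $(-)_{\leq n}$ splits into the trivial identification below degree $n$ and a degree-$n$ check that uses the surjectivity half of the hypergroupoid hypothesis together with the fact that a map of $\mathbb{A}$-sets which is bijective on boundaries and whose target/source fibre product is computed degreewise must itself be bijective. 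The main obstacle I anticipate is purely bookkeeping: keeping straight the index shift between "$d(a) > n$" in the hypergroupoid condition and "$\textup{cosk}_{n-1}$" in the conclusion, and making sure the EZ-categorical coskeleton (Definition following Proposition~\ref{genreedycrossed}) genuinely satisfies the pullback-stability and truncation-dependence properties that the simplicial $\textup{cosk}$ enjoys — but all of these are exactly the content of Lemma~\ref{coskrep}, so no genuinely new input beyond the simplicial argument of \cite[Lemma 2.9]{MR3033634} is required.
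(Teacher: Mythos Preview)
Your approach is essentially the same as the paper's: both reduce the statement to Lemma~\ref{coskrep} by rewriting the matching-object condition via the $\textup{sk}\dashv\textup{cosk}$ adjunction and Lemma~\ref{isskele}. The paper's proof is a one-line pointer to that lemma; you have simply unpacked what that pointer means.

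Your hesitation at degree $d(a)=n$ is well-founded but stems from an index inconsistency in the paper rather than a genuine obstacle: the simplicial definition of a trivial relative $n$-hypergroupoid (preceding Lemma~\ref{cosktriv}) requires isomorphisms for $m\geq n$, whereas the augmented version you are working from says $d(a)>n$. With the ``$\geq n$'' reading --- which is what the simplicial Lemma~\ref{cosktriv} uses and what the paper is plainly transporting --- the degree-$n$ case is immediate from the definition, and your attempted workaround using only the surjectivity half is both unnecessary and, as you seem to sense, insufficient on its own. Once that index is straightened out, your ``cleanest route'' (compare truncations $(-)_{\leq n}$ and invoke Lemma~\ref{coskrep}) is exactly right and is all the paper intends.
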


\begin{proof}
This follows from comparing the definition to the general result about augmented coskeletal objects in Lemma \ref{coskrep}.  Note that this shows that $(\mathbb{A},n)$-hypergroupoids can be seen as $n$-truncated $\mathbb{A}$-hypercovers.
\end{proof}

\begin{definition}
A model for the $\infty$-category of strongly quasi-compact $(\mathbb{A},n)$-geometric derived Artin (resp., Deligne-Mumford) stacks is given by the relative category consisting of the derived Artin (resp., Deligne-Mumford) $(\mathbb{A},n)$-hypergroupoids and the class of derived trivial Artin (resp., Deligne-Mumford) $(\mathbb{A},n)$-hypergroupoids.  We will denote the homotopy category as 
$$\mathbb{A} \text{-} \mathcal{G}_n^\text{sm}(\textbf{dAff}) \qquad \left( \text{resp., } \mathbb{A} \text{-} \mathcal{G}_n^\text{\'{e}t}(\textbf{dAff}) \right)$$
\end{definition}

\begin{remark}
We could have also (equivalently) formulated the theory of $(\mathbb{A},n)$-geometric derived Artin or Deligne-Mumford stacks by using a representability criteria.  However, using the hypergroupoid construction highlights the beauty of the construction with the intertwining of the hypercovering conditions.
\end{remark}

We have listed the following as a conjecture, as relative categories do not have the notion of Quillen equivalence, and therefore we can only talk about the derived adjunction that may exist between the homotopy categories.  To prove the following conjecture, one would need to construct the full Quillen model structure, and prove that there is a Quillen adjunction.  It could also be the case that this result does not hold in all cases, but only for a subclass of augmentation categories.

\begin{conjecture}\label{conj3}
Let $\mathbb{A}$ be an augmentation category, then there is an adjunction
\begin{align*}
\mathbb{L} i_! :  \mathcal{G}_n^\text{sm}(\textbf{dAff})  &\rightleftarrows \mathbb{A} \text{-} \mathcal{G}_n^\text{sm}(\textbf{dAff})  : \mathbb{R}i^\ast\\
\bigg( \text{resp., } \mathbb{L} i_! :  \mathcal{G}_n^\text{\'{e}t}(\textbf{dAff})  &\rightleftarrows \mathbb{A} \text{-} \mathcal{G}_n^\text{\'{e}t}(\textbf{dAff})  : \mathbb{R}i^\ast \bigg)
\end{align*}
\end{conjecture}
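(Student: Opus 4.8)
The plan is to construct the adjunction at the level of the underlying presheaf categories and then check it descends to the relative localisations. First I would recall that $i \colon \Delta \hookrightarrow \mathbb{A}$ induces the usual triple of adjoints $(i_! \dashv i^\ast \dashv i_\ast)$ between $\textbf{sdAff} = \textbf{dAff}^{\Delta^{op}}$ and $\textbf{dAff}^{\mathbb{A}^{op}}$, obtained by left and right Kan extension along $i^{op}$ objectwise in $\textbf{dAff}$ (this makes sense since $\textbf{dAff}$ has the relevant limits and colimits up to homotopy, as in the homotopical algebraic geometric context of \cite{MR2394633}). The candidate functors for the statement are then the left-derived functor $\mathbb{L}i_!$ of $i_!$ and the right-derived functor $\mathbb{R}i^\ast$ of $i^\ast$, where ``derived'' is taken with respect to the two relative structures, i.e.\ the (trivial) $(\mathbb{A},n)$-hypergroupoids on one side and the (trivial) $n$-hypergroupoids on the other.

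The key steps, in order, would be: (1) verify that $i^\ast$ carries derived Artin (resp.\ Deligne--Mumford) $(\mathbb{A},n)$-hypergroupoids to derived $n$-hypergroupoids and trivial ones to trivial ones --- this is a direct computation, since the inclusion $i$ is a morphism of EZ-categories preserving degree, so $i^\ast X$ evaluated at $[m]$ is $X_{i[m]}$, horns $\Lambda^k[m]$ pull back from horns $\Lambda^{f}\mathbb{A}[i[m]]$, the coskeleton formulas of Lemma \ref{coskrep} are compatible with $i^\ast$, and smoothness/\'etaleness and the isomorphism-above-degree-$n$ conditions are inherited; (2) show $i^\ast$ preserves the classes of weak equivalences of the two relative categories, hence descends to a functor $\mathbb{R}i^\ast$ on homotopy categories with no fibrant replacement actually needed; (3) produce the left adjoint by a formal argument: since $i_! \dashv i^\ast$ at the presheaf level and $i^\ast$ preserves weak equivalences, one obtains an induced adjunction on localisations provided $i_!$ sends the trivial $n$-hypergroupoids into the saturation of the trivial $(\mathbb{A},n)$-hypergroupoids, which follows from the observation already used in Section \ref{auglocmodels} that $i_!$ of a hypercover is an $\mathbb{A}$-hypercover together with the truncation characterisation of trivial hypergroupoids as bounded hypercovers; (4) assemble these into the stated adjunction using the universal property of the localisation $\mathcal{C}[\mathcal{W}^{-1}]$, exactly as one gets a derived adjunction from a Quillen pair but only invoking the relative-category structure.

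I would present step (1) as a lemma (``$i^\ast$ and $i_!$ are compatible with the hypergroupoid structures'') and step (4) as the formal deduction, so the write-up mirrors the structure of Theorem \ref{equivcat}. I would also remark that $\mathbb{L}i_!$ is computed on an $n$-hypergroupoid $X$ by first taking $i_!X$ and then, if necessary, replacing it by an $(\mathbb{A},n)$-hypergroupoid via the small-object argument against horn inclusions (which lands in the right relative category by the coskeletal bound of Lemma \ref{coskrep}); making this replacement explicit is what justifies calling the functor ``left derived'' in the absence of a full model structure.

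The hard part will be step (3): controlling $i_!$. Unlike $i^\ast$, the left Kan extension $i_!$ does not obviously preserve the isomorphism-above-degree-$n$ condition, because new non-degenerate cells in degrees $> n$ of $\mathbb{A}$ can be created by cells of $\Delta$ that were already forced in degree $\le n$; so $i_!X$ of an $n$-hypergroupoid need not itself be an $(\mathbb{A},n)$-hypergroupoid, only weakly equivalent to one after the anodyne replacement mentioned above. Establishing that this replacement exists \emph{within} $\textbf{dAff}^{\mathbb{A}^{op}}$ (i.e.\ that filling $\mathbb{A}$-horns can be done by smooth, resp.\ \'etale, surjections, keeping one inside the relative category of hypergroupoids) is the genuine obstacle, and is presumably the reason the statement is flagged as a conjecture rather than a theorem --- it is the one place where a full model structure on $\widehat{\mathbb{A}}$ compatible with the covering classes, rather than just the abstract relative category, would be needed. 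A cleaner but more laborious alternative, which I would mention, is to bypass $i_!$ entirely and instead build $\mathbb{L}i_!$ as the left adjoint guaranteed abstractly by an adjoint functor theorem once $\mathbb{R}i^\ast$ is known to be a right adjoint preserving the relevant (homotopy) limits, at the cost of losing the explicit formula.
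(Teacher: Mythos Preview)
The paper does not prove this statement: it is explicitly labelled a \emph{conjecture}, and no argument is given. The surrounding paragraph explains the reason: relative categories do not carry a notion of Quillen adjunction, so one cannot simply transport the Quillen pair of Lemma~\ref{quiladj1} to the hypergroupoid setting; the author states that a proof would require constructing a genuine Quillen model structure on the $(\mathbb{A},n)$-hypergroupoid side and then exhibiting a Quillen adjunction, and even suggests the result may fail for general augmentation categories.

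Your outline is therefore not to be compared against a proof in the paper, but it is worth noting that your own diagnosis of the obstruction matches the author's. You correctly isolate step~(3) --- showing that $i_!$ of a trivial $n$-hypergroupoid lands, after replacement, in the saturation of the trivial $(\mathbb{A},n)$-hypergroupoids while remaining in $\textbf{dAff}^{\mathbb{A}^{op}}$ --- as the genuine gap, and you observe that filling this gap is exactly where a full model structure compatible with the covering classes would be required. That is precisely the author's stated reason for leaving the statement as a conjecture. One further caution: your step~(1) is less automatic than you suggest. The claim that simplicial horns ``pull back from'' $\mathbb{A}$-horns amounts to $i_!\Lambda^k[m] \cong \Lambda^f\mathbb{A}[i[m]]$ and $i_!\Delta[m] \cong \mathbb{A}[i[m]]$, which holds for crossed simplicial groups and for $\Omega$ (where $\Delta$ is a sieve), but (AC2) only demands that $i$ be faithful, not full, so in the generality of Definition~\ref{augdef} this identification requires justification.
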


We finish this section by outlining a potential direction for future research of $(a,n)$-hypergroupoids.  One of the key tools used in derived algebraic geometry is that of quasi-coherent complexes as defined in \cite[\S 5.2]{MR2717174}, of particular interest is the cotangent complex which controls infinitesimal deformations.  In \cite{MR3033634} an alternative, homotopy equivalent definition of quasi-coherent complexes was given using $n$-hypergroupoids, giving a cosimplicial object.  This definition would be adjustable to the augmented setting, giving rise to a particular coaugmented object.  The question is then what type of deformation would such an augmented cotangent complex control?

\section{Equivariant Stacks}\label{sec:6}

Our first non-trivial example of an augmentation category will be crossed simplicial groups in the sense of Definition~\ref{csgrpdeff},  introduced by Loday and Fiedorowicz~\cite{loday} (and independently by Krasauskas under the name of \textit{skew-simplicial sets}~\cite{krasauskas}).  In particular a \textit{crossed simplicial group} is a category $\Delta \mathfrak{G}$ equipped with an embedding $i \colon  \Delta \hookrightarrow \Delta \mathfrak{G}$ such that:
\begin{enumerate}
\item The functor $i$ is bijective on objects.
\item Any morphism $u \colon  i[m] \to i[n]$ in $\Delta \mathfrak{G}$ can be uniquely written as $i(\phi) \circ g$ where $\phi \colon  [m] \to [n]$ is a morphism in $\Delta$ and $g$ is an automorphism of $i[m]$ in $\Delta \mathfrak{G}$.
\end{enumerate}

The canonical example of such a category is the cyclic category of Connes, which we will denote $\Delta \mathfrak{C}$~\cite{connes1}.  The following lemma will be used to prove several results in the rest of this article, and follows from Proposition~\ref{genreedycrossed}.

\begin{lemma}[{\cite[\S 6]{genreedy}}]\label{crossisez}
Let $\mathbb{R}$ be a strict EZ-category.  Then any crossed group $\mathbb{R} \mathfrak{G}$ on $\mathbb{R}$ is an EZ-category.
\end{lemma}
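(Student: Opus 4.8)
The plan is to verify the three defining conditions of an EZ-category (Definition 6.6 of \cite{genreedy}) directly for $\mathbb{R}\mathfrak{G}$, using as input the EZ-structure on $\mathbb{R}$ together with the crossed-group factorisation and Proposition \ref{genreedycrossed}. First I would fix the degree function on $\mathbb{R}\mathfrak{G}$ to be the one coming from $\mathbb{R}$ (this makes sense since $\mathbb{R} \hookrightarrow \mathbb{R}\mathfrak{G}$ is bijective on objects), and recall from Proposition \ref{genreedycrossed} that $\mathbb{R}\mathfrak{G}$ already carries a unique dualisable generalised Reedy structure with $(\mathbb{R}\mathfrak{G})^{+}$, $(\mathbb{R}\mathfrak{G})^{-}$ extending $\mathbb{R}^{+}$, $\mathbb{R}^{-}$. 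The content of the lemma is then that, in this structure, $(\mathbb{R}\mathfrak{G})^{+}$ is exactly the monomorphisms and $(\mathbb{R}\mathfrak{G})^{-}$ is exactly the (split) epimorphisms, and that the absolute-pushout condition survives.

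The key steps, in order, are as follows. (1) Characterise monomorphisms and epimorphisms in $\mathbb{R}\mathfrak{G}$: using the unique factorisation $u = i(\phi)\circ g$ with $g$ an automorphism, and the fact that automorphisms are both mono and epi, show that $u$ is monic (resp.\ epic, resp.\ a split epi) in $\mathbb{R}\mathfrak{G}$ if and only if $i(\phi)$ is, which by faithfulness of $i$ and the corresponding property in $\mathbb{R}$ reduces to $\phi$ being monic (resp.\ epic) in $\mathbb{R}$. The only subtlety is checking that precomposition/postcomposition with an automorphism cannot create or destroy the cancellation property against the \emph{new} morphisms of $\mathbb{R}\mathfrak{G}$; this follows again by factoring the test morphisms and absorbing the special automorphisms. (2) Deduce condition (1) of an EZ-category: a monomorphism preserves degree iff it is invertible and raises it otherwise — this is immediate from Step (1) plus the corresponding statement for $\mathbb{R}$ and the fact that $\mathrm{Iso}(\mathbb{R}\mathfrak{G})$ preserves degree (axiom i) of the generalised Reedy structure). (3) Condition (2), the split-epi/mono factorisation: take $u = i(\phi)\circ g$, factor $\phi = \mu\circ\sigma$ in $\mathbb{R}$ with $\mu$ mono and $\sigma$ split epi, and observe $u = i(\mu)\circ(i(\sigma)\circ g)$; then $i(\sigma)\circ g$ is a split epimorphism in $\mathbb{R}\mathfrak{G}$ by Step (1) (composite of a split epi with an iso), and $i(\mu)$ is a monomorphism, giving the desired factorisation. (4) Condition (3), absolute pushouts of pairs of split epis with common domain: given split epis $p_1, p_2$ out of $a\in\mathbb{R}\mathfrak{G}$, write $p_j = i(\sigma_j)\circ g$ with a \emph{common} automorphism $g$ of $a$ (absorb it) so that $p_j = i(\sigma_j)\circ g$ with $\sigma_j$ split epis in $\mathbb{R}$ out of the same object; form the absolute pushout of $\sigma_1,\sigma_2$ in $\mathbb{R}$ (which exists since $\mathbb{R}$ is EZ), push it into $\widehat{\mathbb{R}\mathfrak{G}}$, and check that precomposing the cone by the iso $g$ and using that the Yoneda embedding sends the $\mathbb{R}$-pushout to a pushout (together with the induction/restriction relation between $\widehat{\mathbb{R}}$ and $\widehat{\mathbb{R}\mathfrak{G}}$ along $i$) yields an absolute pushout in $\widehat{\mathbb{R}\mathfrak{G}}$.

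The main obstacle I expect is Step (4): transporting the \emph{absoluteness} of the pushout along the inclusion $i \colon \mathbb{R}\hookrightarrow \mathbb{R}\mathfrak{G}$. One must be careful that ``absolute pushout in $\widehat{\mathbb{R}}$'' is not automatically ``absolute pushout in $\widehat{\mathbb{R}\mathfrak{G}}$'', since the Yoneda embeddings are into different presheaf topoi; the argument needs the left Kan extension $i_{!}$ (which is the functor sending $\mathbb{R}[r]$ to $(\mathbb{R}\mathfrak{G})[r]$ by bijectivity on objects) to preserve the relevant pushout, and then that the remaining morphisms of $(\mathbb{R}\mathfrak{G})[r]$ not in the image of $i$ factor through special automorphisms in a way compatible with the cone — exactly the place where the crossed-group axiom is essential. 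I would isolate this as a sublemma: \emph{for a crossed $\mathbb{R}$-group $\mathbb{R}\mathfrak{G}$, the functor $i_! \colon \widehat{\mathbb{R}} \to \widehat{\mathbb{R}\mathfrak{G}}$ preserves absolute pushouts of split epimorphisms}, and prove it by the explicit description of $i_!$ on representables together with the free $\mathrm{Aut}(r)$-action. Everything else is a routine translation of the EZ-axioms through the unique factorisation, so I would keep those verifications brief and concentrate the exposition on this transport-of-absoluteness point.
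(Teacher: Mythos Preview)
The paper does not actually prove this lemma; it is stated with a bare citation to \cite[\S 6]{genreedy} and no proof environment follows. So there is nothing in-paper to compare against, and your proposal is an independent reconstruction of the Berger--Moerdijk argument.

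Steps (1)--(3) are correct and routine. Step (4), however, contains a genuine gap and a misplaced worry. The gap: you cannot arrange for the two split epis $p_1,p_2$ out of $a$ to share a \emph{common} source-side automorphism $g$. Writing $p_j=i(\sigma_j)\circ g_j$ and precomposing by $g_1^{-1}$ puts $p_1$ into $\mathbb{R}$ but leaves $p_2g_1^{-1}=i(\sigma_2)\circ(g_2g_1^{-1})$, which is still not in $\mathbb{R}$ unless $g_1=g_2$; your ``absorb it'' step does not go through as stated. The fix is to use the \emph{dual} factorisation available in any crossed group, $p_j=h_j\circ i(\tau_j)$ with $h_j\in\mathfrak{G}$ an automorphism of the \emph{target}. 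Then $\tau_1,\tau_2$ are split epimorphisms in $\mathbb{R}$ with common domain $a$; their absolute pushout exists in $\mathbb{R}$ by hypothesis, and postcomposing the two legs with the isomorphisms $h_1,h_2$ yields the absolute pushout of $p_1,p_2$ in $\mathbb{R}\mathfrak{G}$.

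The misplaced worry: you do not need $i_!$ or any presheaf-level transport argument for absoluteness. Absolute pushouts are characterised purely equationally (they are split pushouts in the sense of Par\'e), hence are preserved by \emph{every} functor and remain absolute in the target category; in particular the inclusion $i\colon\mathbb{R}\hookrightarrow\mathbb{R}\mathfrak{G}$ already carries an absolute pushout in $\mathbb{R}$ to an absolute pushout in $\mathbb{R}\mathfrak{G}$. The sublemma you propose about $i_!$ preserving such pushouts is both unnecessary and harder than the direct equational observation.
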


\begin{proposition}
The category $\Delta \mathfrak{G}$ is an augmentation category.
\end{proposition}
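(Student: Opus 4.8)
The plan is to verify the three axioms (AC1)--(AC3) of Definition~\ref{augdef} for $\mathbb{A} = \Delta\mathfrak{G}$. For (AC1), the bulk of the work is already packaged: by Lemma~\ref{crossisez}, any crossed group $\mathbb{R}\mathfrak{G}$ on a strict EZ-category $\mathbb{R}$ is an EZ-category, and $\Delta$ is a strict EZ-category, so $\Delta\mathfrak{G}$ is an EZ-category with the degree function inherited from $\Delta$ (every object $i[n]$ has degree $n$), with $\mathbb{A}^+$ the monomorphisms and $\mathbb{A}^-$ the split epimorphisms. It remains to exhibit the symmetric promagmoidal structure, i.e.\ a symmetric tensor product $(\widehat{\Delta\mathfrak{G}}, \square, I_\square)$ on presheaves. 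Here I would simply take $\square$ to be the cartesian product on $\widehat{\Delta\mathfrak{G}}$ (every presheaf topos is cartesian closed and the cartesian product is symmetric), with $I_\square$ the terminal object; this is the ``clearly any presheaf category carries the cartesian product'' remark from the definition of symmetric promagmoidal.

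For (AC2), the inclusion $i\colon \Delta\hookrightarrow\Delta\mathfrak{G}$ is given as part of the data of a crossed simplicial group: it is bijective on objects and faithful, and it is a morphism of generalised Reedy categories by Proposition~\ref{genreedycrossed}, hence a morphism of EZ-categories since it preserves monomorphisms and split epimorphisms and the degree. The compatibility condition $i_!(X)\square i_!(Y)\simeq i_!(X\times Y)$ is where a short argument is needed: with $\square$ taken to be cartesian product, $i_!$ is the left Kan extension along $i$, and one checks that $i_!$ preserves finite products. The cleanest route is to note that $i_!$ is left adjoint to the restriction $i^\ast$, that $i^\ast$ preserves all limits, and that $i_!$ applied to a representable $\Delta[n]$ is the representable $\mathbb{A}[i[n]]$; then $i_!(\Delta[m]\times\Delta[n])$ and $\mathbb{A}[i[m]]\square\mathbb{A}[i[n]]$ must be compared. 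Since $i$ is bijective on objects, the product $\mathbb{A}[i[m]]\times\mathbb{A}[i[n]]$ computed in $\widehat{\Delta\mathfrak{G}}$ restricts along $i^\ast$ to $\Delta[m]\times\Delta[n]$, and a density/colimit argument extends the isomorphism from representables to all simplicial sets $X,Y$.

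For (AC3) I would argue as follows. Since $\Delta\mathfrak{G}$ has $\mathbb{A}^+ \cap \mathbb{A}^- = \mathrm{Iso}$ but nontrivial automorphisms, one must genuinely use normal monomorphisms rather than all monomorphisms (contrast Remark~\ref{ifstrictthenmomo}). By Lemma~\ref{normalmonocound} and the fact that the pushout-product of saturated classes is controlled by generators, it suffices to verify the pushout-product property on generating normal monomorphisms; for the ``linear'' side one is reduced to boundary inclusions $\partial\mathbb{A}[i[n]]\to\mathbb{A}[i[n]]$ coming from $\Delta$, and for the other side to boundary inclusions $\partial\mathbb{A}[a]\to\mathbb{A}[a]$ for general $a\in\Delta\mathfrak{G}$. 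With $\square$ the cartesian product and using the (AC2) compatibility $i_!(\Delta[m]\times\Delta[n])\simeq i_!\Delta[m]\square i_!\Delta[n]$, the pushout-product $\partial\Delta[m]\times\Delta[n]\cup\Delta[m]\times\partial\Delta[n]\to\Delta[m]\times\Delta[n]$ of simplicial boundary inclusions is itself a (normal) monomorphism of simplicial sets, and applying $i_!$ — which preserves monomorphisms and pushouts — keeps it a normal monomorphism by Corollary~\ref{normtonorm}, since the target $i_!(\Delta[m]\times\Delta[n])$ is normal (the $\Delta\mathfrak{G}$-automorphisms act freely on the non-degenerate cells of $i_!$ of a simplicial set, as these all come from the trivial-automorphism objects of $\Delta$). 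The mixed case, one linear and one arbitrary normal monomorphism, then follows by a standard reduction: the pushout-product of a boundary inclusion from $\Delta$ with an arbitrary boundary inclusion $\partial\mathbb{A}[a]\to\mathbb{A}[a]$ is, up to the $\mathrm{Aut}(a)$-action, a monomorphism with normal target, hence normal again by Corollary~\ref{normtonorm}.

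The main obstacle is (AC3), specifically controlling the isotropy/freeness condition in the definition of normal monomorphism under the pushout-product: one must check that no new cell with nontrivial stabiliser is created when forming $A\square K\to B\square K$, and that the crossed-group action on cells of a cartesian product factors through the actions on the factors in a way compatible with $i_!$. I expect this to reduce to the combinatorial fact — provable directly from the crossed simplicial group relations — that a non-degenerate cell of $i_!(X)$ for a simplicial set $X$ has trivial $\Delta\mathfrak{G}$-stabiliser, together with the behaviour of stabilisers under products in $\widehat{\Delta\mathfrak{G}}$; once that is in hand, (AC3) follows formally from Lemma~\ref{normalmonocound} and Corollary~\ref{normtonorm}. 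This is precisely the kind of argument that, as the Remark before Section~\ref{sec:4} notes, runs parallel to the dendroidal case with $\Omega$ replaced by $\mathbb{A}$.
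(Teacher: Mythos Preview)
Your treatment of (AC1) and the overall plan match the paper. The substantive divergence is in (AC3): the paper does not argue via $i_!$ at all, but instead forward-references Lemma~\ref{ismonoidalclosed}, which proves the \emph{full} pushout-product axiom for normal monomorphisms by a direct element-level check. Concretely, if $(y,y')\in Y\times Y'$ lies outside the image of $f\boxtimes f'$, then $y\notin f(X)$ and $y'\notin f'(X')$; since $f$ and $f'$ are normal monomorphisms, each of $y,y'$ has trivial isotropy, hence so does $(y,y')$ under the diagonal $\mathfrak{G}_n$-action. This is stronger than (AC3) since no linearity hypothesis is used.

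Your route through $i_!$ has a genuine gap: $i_!$ does \emph{not} preserve finite products. From the explicit formula given just after the proposition, $i_!(X)_n=\mathfrak{G}_n\times X_n$, so $i_!(X\times Y)_n=\mathfrak{G}_n\times X_n\times Y_n$ while $(i_!X\times i_!Y)_n=\mathfrak{G}_n\times X_n\times\mathfrak{G}_n\times Y_n$; these are not isomorphic once $\mathfrak{G}_n$ is nontrivial. Consequently your (AC2) ``density/colimit'' argument does not go through, and your (AC3) reduction---applying $i_!$ to the simplicial pushout-product and invoking the claimed compatibility---does not produce the pushout-product in $\widehat{\Delta\mathfrak{G}}$. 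The same issue undermines the claim that $i^\ast\mathbb{A}[i[m]]=\Delta[m]$. Your observation that $i_!(X)$ is always normal (the $\mathfrak{G}_n$-action on $\mathfrak{G}_n\times X_n$ is free by multiplication) is correct and is exactly what makes the direct argument of Lemma~\ref{ismonoidalclosed} work: since $i$ is bijective on objects, every representable $\mathbb{A}[a]$ is of the form $i_!\Delta[n]$ and hence has free $\mathfrak{G}$-action at every level, so products of representables are normal and Corollary~\ref{normtonorm} applies to any monomorphism into them. If you rewrite (AC3) along these lines, bypassing $i_!$ on products entirely, you recover essentially the paper's argument.
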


\begin{proof}
We check that $\Delta \mathfrak{G}$ satisfies the conditions (AC1) -- (AC3) appearing in Definition~\ref{augdef}:
\begin{enumerate}
\item[(AC1)] From Lemma~\ref{crossisez} we see that $\Delta \mathfrak{G}$ is an EZ-category.  A tensor structure is given by just taking the cartesian product. 
\item[(AC2)] The inclusion $\Delta \hookrightarrow \Delta \mathfrak{G}$ and the compatibility of the monoidal structures follows by definition.
\item[(AC3)] In Lemma~\ref{ismonoidalclosed} below, we prove that the normal monomorphisms have the pushout-product property in full generality, which is a stronger result.  We forgo the proof of (AC3) and instead refer to the proof of Lemma~\ref{ismonoidalclosed}.
model structure is closed monoidal, which is a stronger result.
\end{enumerate}
\end{proof}

We describe the adjoint $i_!$, which appears for the cyclic case in \cite[\S 3.1]{connes2}.   As the functor $i^\ast \colon \widehat{\Delta \mathfrak{C}} \to \widehat{\Delta}$ forgets the $\Delta \mathfrak{G}$ action, the left adjoint is naturally a free construction.  In particular, for a simplicial set $X$ we have $i_!(X)_n = \text{Aut}_{\Delta \mathfrak{G}}([n]) \times X_n $, where the  $\text{Aut}_{\Delta \mathfrak{G}}([n])$ action given by left multiplication.  The face and degeneracy maps are given by: 
\begin{align*}
d_i(g,x)&=\left(d_i(g),d_{g^{-1}(i)}(x)\right),\\
s_i(g,x)&=\left(s_i(g),s_{g^{-1}(i)}(x)\right).
\end{align*}

\begin{corollary}\label{equivariantcorr}\leavevmode
\begin{itemize}
\item There is a Quillen model structure on the category of ${\Delta \mathfrak{G}}$-sets, denoted $\widehat{{\Delta \mathfrak{G}}}_\text{Kan}$, where the fibrant objects are the ${\Delta \mathfrak{G}}$-Kan complexes, and the cofibrations are the normal monomorphisms.  Moreover there is a Quillen adjunction 
$$i_! \colon \widehat{\Delta}_\text{Kan} \rightleftarrows \widehat{{\Delta \mathfrak{G}}}_\text{Kan} \colon i^\ast.$$
\item There is a local model structure on the category of ${\Delta \mathfrak{G}}$-presheaves, denoted ${\Delta \mathfrak{G}} \textbf{-Pr}_\tau(\mathcal{C})$ obtained as the left Bousfield localisation of the point-wise Kan model at the class of ${\Delta \mathfrak{G}}$-hypercovers. Moreover there is a Quillen adjunction
$$i_! : \textbf{sPr}_\tau(\mathcal{C}) \rightleftarrows {\Delta \mathfrak{G}} \textbf{-Pr}_\tau(\mathcal{C}) : i^\ast.$$
\item The category of $({\Delta \mathfrak{G}},n)$-geometric derived Artin stacks, which we denote ${\Delta \mathfrak{G}} \text{-} \mathcal{G}_n^\text{sm}(\textbf{dAff})$, is given as the homotopy category of derived $({\Delta \mathfrak{G}},n)$-hypergroupoids with respect to the class of trivial $({\Delta \mathfrak{G}},n)$-hypergroupoids.
\end{itemize}
\end{corollary}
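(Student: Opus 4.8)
The plan is to deduce all three assertions directly from the general machinery of Sections~\ref{sec:4} and~\ref{sec:5}, now that the preceding proposition has verified that $\Delta \mathfrak{G}$ satisfies (AC1)--(AC3) of Definition~\ref{augdef} and is therefore an augmentation category. Every construction carried out there for an arbitrary augmentation category $\mathbb{A}$ applies verbatim to $\mathbb{A} = \Delta \mathfrak{G}$, so the only genuine work is to translate objects into the crossed-simplicial vocabulary; in particular there is no obstacle here beyond the verification of (AC3) itself, which is the content of Lemma~\ref{ismonoidalclosed} and is the single non-formal input used in the whole corollary.

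For the first item I would invoke the existence theorem of Section~\ref{sec:4} for $\widehat{\mathbb{A}}_\text{Kan}$, which gives the cofibrantly generated model structure on $\widehat{\Delta \mathfrak{G}}$ with cofibrations the normal monomorphisms of Definition~\ref{thecofibrations}; the identification of the fibrant objects with the $\mathbb{A}$-Kan complexes is the corresponding proposition of that section, specialised to $\Delta \mathfrak{G}$. The Quillen adjunction $i_! \colon \widehat{\Delta}_\text{Kan} \rightleftarrows \widehat{\Delta \mathfrak{G}}_\text{Kan} \colon i^\ast$ is then Lemma~\ref{quiladj1}. It is worth recording the explicit description of $i_!$ on a simplicial set $X$, namely $i_!(X)_n = \text{Aut}_{\Delta \mathfrak{G}}([n]) \times X_n$ with the face and degeneracy operators as displayed above, since this makes the freeness of the left adjoint manifest and is the formula one uses in computations.

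For the second item I would first note that $\widehat{\Delta \mathfrak{G}}_\text{Kan}$ is left-proper and combinatorial — it is a presheaf category, hence locally presentable, and it is cofibrantly generated by the previous item, while left-properness is the corresponding proposition of Section~\ref{sec:4} — so that the left Bousfield localisation required in Section~\ref{auglocmodels} exists. Applying the construction there, $\Delta \mathfrak{G} \textbf{-Pr}_\tau(\mathcal{C})$ is the projective point-wise Kan model on $\Delta \mathfrak{G} \textbf{-Pr}(\mathcal{C}) = \widehat{\Delta \mathfrak{G}}^{\mathcal{C}^{op}}$ localised at the class of $\Delta \mathfrak{G}$-hypercoverings of Definition~\ref{aughyper}, and the Quillen adjunction $i_! \colon \textbf{sPr}_\tau(\mathcal{C}) \rightleftarrows \Delta \mathfrak{G} \textbf{-Pr}_\tau(\mathcal{C}) \colon i^\ast$ is the corresponding lemma of that section; its proof only requires that $i_!$ take simplicial hypercovers to $\Delta \mathfrak{G}$-hypercovers, which follows from the pointwise formula for $i_!$ together with the compatibility of $i_!$ with boundary objects coming from (AC2).

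For the third item I would simply instantiate, at $\mathbb{A} = \Delta \mathfrak{G}$, the definitions of a derived Artin $(\mathbb{A},n)$-hypergroupoid and of a derived trivial Artin $(\mathbb{A},n)$-hypergroupoid, and then take the homotopy category of the resulting relative category, exactly as in the definition of $\mathbb{A} \text{-} \mathcal{G}_n^\text{sm}(\textbf{dAff})$ in Section~\ref{sec:5}; here there is nothing to prove beyond bookkeeping. In summary, the corollary should be written as three short lines, each pointing to the relevant numbered result of Sections~\ref{sec:4}--\ref{sec:5}, the only substantive point having already been absorbed into the statement that $\Delta \mathfrak{G}$ is an augmentation category.
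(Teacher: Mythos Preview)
Your proposal is correct and matches the paper's approach exactly: the paper gives no separate proof of this corollary, treating it as an immediate specialisation of the general results of Sections~\ref{sec:4} and~\ref{sec:5} once $\Delta\mathfrak{G}$ has been shown to be an augmentation category, which is precisely what you do.
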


\begin{definition}
The augmented homotopical algebraic geometry theory arising from ${\Delta \mathfrak{G}}$ will be referred to as \emph{$\mathfrak{G}$-equivariant}.  For example, the category $\text{Ho}({\Delta \mathfrak{G}} \textbf{-Pr}_\tau(\mathcal{C}))$ will be referred to as the category of \emph{$\mathfrak{G}$-equivariant $\infty$-stacks}.
\end{definition}

We will now explore the Kan model structure on $\widehat{{\Delta \mathfrak{G}}}$, which will lead to the justification of using the term $\mathfrak{G}$-equivariant to describe the augmentation.   We shall fix our attention to the cyclic category ${\Delta \mathfrak{C}}$, however, the homotopy of arbitrary crossed simplicial groups can be discussed, see~\cite{balchin2} for details.

The Kan model structure on $\widehat{{\Delta \mathfrak{C}}}$ was in fact the first model structure to be developed for cyclic sets by Dwyer, Hopkins and Kan~\cite{homotopycc}.  

\begin{proposition}[{\cite[Theorem 3.1]{homotopycc}}]\label{csweak}
The category $\widehat{{\Delta \mathfrak{C}}}$ has a cofibrantly generated model structure where a map $f \colon X \to Y$ is a:
\begin{itemize}
\item Weak equivalence if $i^\ast(f) \colon i^\ast(X) \to i^\ast(Y)$ is a weak equivalence in $\widehat{\Delta}_\text{Kan}$.
\item Fibration if $i^\ast(f) \colon i^\ast(X) \to i^\ast(Y)$ is a fibration in $\widehat{\Delta}_\text{Kan}$.
\item Cofibration if it has the LLP with respect to the trivial fibrations.
\end{itemize}
We shall denote this model $\widehat{{\Delta \mathfrak{C}}}_{\text{DHK}}$.
\end{proposition}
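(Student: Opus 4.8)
The plan is to realise this model structure as the right transfer of the Kan--Quillen model structure $\widehat{\Delta}_{\text{Kan}}$ along the adjunction $i_! : \widehat{\Delta}_{\text{Kan}} \rightleftarrows \widehat{{\Delta \mathfrak{C}}} : i^\ast$, the left adjoint $i_! = \text{Lan}_i$ being the functor described explicitly just before Corollary~\ref{equivariantcorr}. The weak equivalences and fibrations in the statement are exactly the maps carried by the right adjoint $i^\ast$ into the weak equivalences, resp.\ fibrations, of $\widehat{\Delta}_{\text{Kan}}$, and the cofibrations are then forced to be the maps with the left lifting property against the $i^\ast$-trivial fibrations; this is precisely the shape of a transferred model structure. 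I would therefore appeal to the standard recognition (transfer) theorem for cofibrantly generated model categories, cf.\ \cite{hovey1999model}: for an adjunction $L : \mathcal{M} \rightleftarrows \mathcal{N} : R$ with $\mathcal{M}$ cofibrantly generated, generating cofibrations $I$ and generating trivial cofibrations $J$, and $\mathcal{N}$ bicomplete, the transferred structure exists provided that (a) the domains of the maps in $LI$ and $LJ$ are small, (b) $R$ commutes with filtered colimits, and (c) every relative $LJ$-cell complex is carried by $R$ to a weak equivalence of $\mathcal{M}$. Here $\mathcal{M} = \widehat{\Delta}_{\text{Kan}}$, $\mathcal{N} = \widehat{{\Delta \mathfrak{C}}}$, $L = i_!$, $R = i^\ast$, $I = \{\partial\Delta[n] \hookrightarrow \Delta[n]\}$, $J = \{\Lambda^k[n] \hookrightarrow \Delta[n]\}$.

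Hypotheses (a) and (b) are immediate. As a presheaf category $\widehat{{\Delta \mathfrak{C}}}$ is bicomplete and locally presentable, so every object --- in particular each $i_!(\partial\Delta[n])$ and $i_!(\Lambda^k[n])$ --- is small; and $i^\ast$ is not merely right adjoint to $i_! = \text{Lan}_i$ but also left adjoint to $i_\ast = \text{Ran}_i$, hence preserves all colimits, filtered ones included. The resulting structure is then cofibrantly generated with generating cofibrations $\{i_!(\partial\Delta[n] \hookrightarrow \Delta[n])\}$ and generating trivial cofibrations $\{i_!(\Lambda^k[n] \hookrightarrow \Delta[n])\}$; one should note that these are \emph{smaller} than the boundary and horn inclusions of $\widehat{{\Delta \mathfrak{C}}}$ itself, which is exactly why $\widehat{{\Delta \mathfrak{C}}}_{\text{DHK}}$ is a different model structure from the $\widehat{{\Delta \mathfrak{C}}}_{\text{Kan}}$ of Corollary~\ref{equivariantcorr}.

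The substance of the argument is the acyclicity condition (c). Since $i^\ast$ is cocontinuous it carries a relative $i_!J$-cell complex to a relative $i^\ast i_!J$-cell complex, and as trivial cofibrations in $\widehat{\Delta}_{\text{Kan}}$ are closed under pushout and transfinite composition it suffices to prove that each $i^\ast i_!(\Lambda^k[n] \hookrightarrow \Delta[n])$ is a trivial cofibration of simplicial sets. With the formula $i_!(X)_m \cong \text{Aut}_{{\Delta \mathfrak{C}}}([m]) \times X_m$ (and its twisted face and degeneracy operators) this map is visibly a monomorphism, so the real point is that it is a weak equivalence --- and this is where the unique-factorisation structure of the crossed simplicial group genuinely enters. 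One convenient packaging is via geometric realisation: $i^\ast i_!(-)$ is, naturally, weakly equivalent to $(-) \times S^1$ at the level of realisations, so the horn inclusion is sent to a product of a trivial cofibration with $S^1$, hence to a weak equivalence. Alternatively one may run the path-object form of the recognition theorem, using that fibrant cyclic sets admit functorial path objects obtained by cotensoring with $i_!(\Delta[1])$ --- built from the tensor compatibility exactly as the cylinder $X \square \mathbb{A}[1]$ of Section~\ref{aughomot} was --- together with a functorial fibrant replacement in $\widehat{{\Delta \mathfrak{C}}}$.

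Granting (c), the recognition theorem delivers the cofibrantly generated model structure on $\widehat{{\Delta \mathfrak{C}}}$ with weak equivalences and fibrations created by $i^\ast$ and cofibrations defined by the lifting property against $i^\ast$-trivial fibrations, which is the asserted $\widehat{{\Delta \mathfrak{C}}}_{\text{DHK}}$. The main obstacle is precisely (c): in contrast to the general development of $\widehat{\mathbb{A}}_{\text{Kan}}$ in Section~\ref{sec:4}, the composite $i^\ast i_!$ does \emph{not} preserve weak equivalences --- already $i^\ast i_!(\Delta[0])$ has the homotopy type of $S^1$ --- so one cannot argue cavalierly through the cell complexes and must either perform the $S^1$-bookkeeping for the horn inclusions or install the fibrant-replacement-and-path-object machinery with care.
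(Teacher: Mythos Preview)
The paper does not give its own proof of this proposition; it is quoted from Dwyer--Hopkins--Kan \cite{homotopycc}, so there is no in-paper argument to compare against. Your transfer argument along $i_! \dashv i^\ast$ is the standard modern packaging of their construction, and the acyclicity verification via the identification $|i^\ast i_! X| \simeq S^1 \times |X|$ (which is established in \cite{homotopycc}) is the correct mechanism. The main line of your proof is therefore sound.

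There is, however, a real error in your parenthetical claim that the transferred generating sets $i_!(\partial\Delta[n] \hookrightarrow \Delta[n])$ and $i_!(\Lambda^k[n] \hookrightarrow \Delta[n])$ are \emph{smaller} than the cyclic boundary and horn inclusions, and that consequently $\widehat{\Delta\mathfrak{C}}_{\mathrm{DHK}}$ is a different model structure from the $\widehat{\Delta\mathfrak{C}}_{\mathrm{Kan}}$ of Corollary~\ref{equivariantcorr}. Because $i \colon \Delta \hookrightarrow \Delta\mathfrak{C}$ is bijective on objects and every cyclic morphism factors uniquely as an automorphism followed by a simplicial map, the left Kan extension $i_!$ sends the representable $\Delta[n]$ to the representable $\Delta\mathfrak{C}[n]$, and --- since $i_!$ preserves colimits --- carries simplicial boundaries and horns onto their cyclic counterparts. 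Your generating sets thus \emph{coincide} with those of $\widehat{\Delta\mathfrak{C}}_{\mathrm{Kan}}$, and the two model structures agree; this is exactly what the paper records in the sentence and corollary immediately following the proposition, citing \cite[Propositions 3.2 and 3.5]{homotopycc} for the identification of the fibrations and cofibrations. A smaller slip in the final paragraph: you write that $i^\ast i_!$ ``does not preserve weak equivalences'', but it does, being homotopically the product with $S^1$; what you mean, and what you actually use, is that the unit $X \to i^\ast i_! X$ is not a weak equivalence.
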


By comparison of the structure of the fibrations \cite[Proposition 3.2]{homotopycc} and cofibrations \cite[Proposition 3.5]{homotopycc}, we see that the model structure of Proposition~\ref{csweak} is exactly that of $\widehat{{\Delta \mathfrak{C}}}_\text{Kan}$.

\begin{corollary}
There is an equivalence $\widehat{{\Delta \mathfrak{C}}}_\text{Kan} \rightleftarrows \widehat{{\Delta \mathfrak{C}}}_\text{DHK}$.
\end{corollary}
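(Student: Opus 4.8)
The plan is to show that the two model structures $\widehat{{\Delta \mathfrak{C}}}_\text{Kan}$ and $\widehat{{\Delta \mathfrak{C}}}_\text{DHK}$ on the category $\widehat{{\Delta \mathfrak{C}}}$ coincide, so that the identity functor is the required equivalence (in fact an isomorphism of model categories). Since both model structures live on the same underlying category, it suffices to check that they have the same cofibrations and the same weak equivalences; the fibrations are then determined. Actually, as already indicated in the remark preceding the corollary, the cleanest route is to show the cofibrations agree and the fibrations agree, whence the weak equivalences agree by the standard fact that a model structure is determined by any two of its three classes.

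First I would recall that the cofibrations of $\widehat{{\Delta \mathfrak{C}}}_\text{Kan}$ are by construction the normal monomorphisms (Definition~\ref{thecofibrations}), equivalently the saturated class generated by the boundary inclusions $\partial {\Delta \mathfrak{C}}[a] \to {\Delta \mathfrak{C}}[a]$ by Lemma~\ref{normalmonocound}. On the other side, \cite[Proposition 3.5]{homotopycc} identifies the cofibrations of $\widehat{{\Delta \mathfrak{C}}}_\text{DHK}$ explicitly, and this description matches the normal monomorphisms — a cyclic map is a DHK-cofibration precisely when it is a monomorphism on which the cyclic group actions on the complement of the image are free, which is exactly the normality condition spelled out in the Definition following Lemma~\ref{anodaretriv}. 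So I would simply cite \cite[Proposition 3.5]{homotopycc} and observe the two descriptions are literally the same condition.

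Next I would compare fibrations. The fibrations of $\widehat{{\Delta \mathfrak{C}}}_\text{DHK}$ are, by definition, exactly the maps $f$ with $i^\ast(f)$ a Kan fibration of simplicial sets. The fibrations of $\widehat{{\Delta \mathfrak{C}}}_\text{Kan}$ are the maps with the RLP against trivial cofibrations; by the identification of fibrant objects with ${\Delta \mathfrak{C}}$-Kan complexes and the anodyne-extension analysis, \cite[Proposition 3.2]{homotopycc} provides the matching explicit description of DHK-fibrations in terms of horn-filling, which is precisely the RLP characterisation of fibrations in $\widehat{{\Delta \mathfrak{C}}}_\text{Kan}$. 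Thus the two classes of fibrations coincide as well. With cofibrations and fibrations agreeing, the weak equivalences — being the maps admitting the usual lifting/factorisation interaction with both classes — must agree, so the two model structures are identical and the identity adjunction $\text{id} \rightleftarrows \text{id}$ is (trivially) a Quillen equivalence.

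The main obstacle, such as it is, is not a deep argument but a bookkeeping one: one must verify that Dwyer--Hopkins--Kan's explicit description of cofibrations (a monomorphism that is free on non-degenerate simplices outside the image, up to the cyclic action) really is word-for-word the normality condition of Definition~\ref{thecofibrations} specialised to $\mathbb{A} = {\Delta \mathfrak{C}}$, where $\text{Aut}([n]) = \mathbb{Z}/(n+1)$. Once that dictionary is in place, the identification of fibrations follows formally, and the coincidence of weak equivalences is automatic. I would therefore present the proof as two short citations plus the observation that a model structure is determined by its cofibrations and fibrations, concluding that $\widehat{{\Delta \mathfrak{C}}}_\text{Kan} = \widehat{{\Delta \mathfrak{C}}}_\text{DHK}$.
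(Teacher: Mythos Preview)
Your proposal is correct and matches the paper's own approach exactly: the paper simply notes that by comparing the descriptions of fibrations \cite[Proposition 3.2]{homotopycc} and cofibrations \cite[Proposition 3.5]{homotopycc} one sees the DHK model structure coincides with $\widehat{{\Delta \mathfrak{C}}}_\text{Kan}$. Your additional remark that a model structure is determined by any two of its three classes makes explicit the only step the paper leaves implicit.
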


Recall from~\cite[Proposition 2.8]{homotopycc} that there is a cyclic realisation functor $|-|_\mathfrak{C} \colon \widehat{{\Delta \mathfrak{C}}} \to \textbf{Top}^{SO(2)}$ along with its right adjoint $S_\mathfrak{C}(-)$.  We can now use the theory of~\cite{homotopycc} to describe the homotopy type of $\widehat{{\Delta \mathfrak{C}}}_\text{Kan}$.

\begin{proposition}[{\cite[Theorem 2.2]{Dwyer1984147}}]\label{ctopweak}
There is a model structure on $\textbf{Top}^{SO(2)}$ where a map $f \colon X \to Y$ is a:
\begin{itemize}
\item Weak equivalence if the underlying map of topological spaces is a weak equivalence in $\textbf{Top}$.
\item Fibration if the underlying map of topological spaces is a fibration in $\textbf{Top}$.
\item Cofibration if it has the LLP with respect to the trivial fibrations.
\end{itemize}
\end{proposition}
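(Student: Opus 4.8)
The plan is to obtain this model structure by transfer along the free--forgetful adjunction, so that no \emph{ad hoc} verification of the model axioms is needed. Write $U \colon \textbf{Top}^{SO(2)} \to \textbf{Top}$ for the functor forgetting the action and $F = SO(2) \times (-) \colon \textbf{Top} \to \textbf{Top}^{SO(2)}$ for its left adjoint, equipping a space with the free $SO(2)$-action. The category $\textbf{Top}^{SO(2)}$ is complete and cocomplete, with all limits and colimits created by $U$; in particular $U$ preserves colimits. Recall that $\textbf{Top}_{\text{Quillen}}$ (taken in a convenient category of spaces) is cofibrantly generated with generating cofibrations $I = \{S^{n-1} \hookrightarrow D^n\}_{n \geq 0}$ and generating trivial cofibrations $J = \{D^n \times \{0\} \hookrightarrow D^n \times [0,1]\}_{n \geq 0}$.

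First I would invoke the standard transfer (Kan recognition) theorem for cofibrantly generated model structures: a cofibrantly generated model structure exists on $\textbf{Top}^{SO(2)}$ with generating cofibrations $F(I)$ and generating trivial cofibrations $F(J)$, in which $f$ is a weak equivalence (resp. fibration) precisely when $U(f)$ is one in $\textbf{Top}_{\text{Quillen}}$, provided that (a) the domains of $F(I) \cup F(J)$ are small with respect to the relevant cell complexes, and (b) the acyclicity condition holds, i.e. every transfinite composite of pushouts of maps in $F(J)$ is sent by $U$ to a weak equivalence. Condition (a) is immediate, since colimits in $\textbf{Top}^{SO(2)}$ are underlying colimits and the spaces $SO(2) \times S^{n-1}$, $SO(2) \times D^n$ are compact.

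The remaining work is condition (b). Since $SO(2) = S^1$ is a CW-complex, hence cofibrant, and $\textbf{Top}_{\text{Quillen}}$ is a monoidal model category, the functor $SO(2) \times (-)$ preserves trivial cofibrations; thus each $U F(j) = SO(2) \times (D^n \times \{0\}) \hookrightarrow SO(2) \times (D^n \times [0,1])$, $j \in J$, is a trivial cofibration of $\textbf{Top}_{\text{Quillen}}$. Because $U$ preserves colimits, it carries a pushout of $F(j)$ in $\textbf{Top}^{SO(2)}$ to the corresponding pushout of $U F(j)$ in $\textbf{Top}$, and trivial cofibrations in $\textbf{Top}_{\text{Quillen}}$ are closed under pushout and transfinite composition; hence every relative $U F(J)$-cell complex is a trivial cofibration, in particular a weak equivalence. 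This verifies (b), and the transfer theorem produces the asserted model structure: weak equivalences and fibrations are detected by $U$, and cofibrations are characterised by the left lifting property against trivial fibrations (automatic in any model category; concretely they form the saturation of $F(I) = \{SO(2) \times S^{n-1} \hookrightarrow SO(2) \times D^n\}$). As a byproduct, $F \dashv U$ is a Quillen adjunction, which is what links this statement to the cyclic realisation functor used afterwards.

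The only non-formal step is the acyclicity condition (b), and even there the content is slight: the one point to handle with care is that pushouts and transfinite composites of the inclusions $SO(2) \times (D^n \times \{0\}) \hookrightarrow SO(2) \times (D^n \times [0,1])$ remain weak equivalences, which is cleanest to see by recognising them as trivial cofibrations of $\textbf{Top}_{\text{Quillen}}$ (or, by hand, as subcomplex inclusions that are deformation retracts) and appealing to closure properties, rather than chasing explicit retractions through the colimits. Since $SO(2)$ is a compact Lie group — in particular a well-pointed, cofibrant topological group — no point-set pathologies intervene.
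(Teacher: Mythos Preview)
Your argument is correct. Note, however, that the paper does not supply its own proof of this proposition: it is simply quoted from the cited reference (Dwyer--Kan), so there is nothing to compare against. Your transfer argument along the free--forgetful adjunction $F \dashv U$ is the standard modern route to this result and is cleanly executed; the only point worth flagging is that you are implicitly working in a convenient category of spaces so that compactness gives smallness and the pushout-product axiom holds, which you do acknowledge.
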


\begin{proposition}[{\cite[Corollary 4.3]{homotopycc}}]\label{cyclicequiv}
There is a Quillen equivalence
$$\widehat{{\Delta \mathfrak{C}}}_\text{Kan} \rightleftarrows \textbf{Top}^{SO(2)},$$
with the equivalence furnished by the cyclic realisation and singular functors.
\end{proposition}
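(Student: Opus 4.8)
The plan is to reduce the statement to the classical Quillen equivalence $\widehat{\Delta}_\text{Kan} \rightleftarrows \textbf{Top}$, using that on both sides of the cyclic adjunction the weak equivalences and fibrations are \emph{created} by a forgetful functor. By the comparison following Proposition~\ref{csweak}, the model $\widehat{{\Delta \mathfrak{C}}}_\text{Kan}$ coincides with $\widehat{{\Delta \mathfrak{C}}}_\text{DHK}$, so a map $f$ of cyclic sets is a (trivial) fibration, resp.\ a weak equivalence, iff $i^\ast f$ is a (trivial) Kan fibration, resp.\ a weak equivalence, where $i^\ast\colon \widehat{{\Delta \mathfrak{C}}} \to \widehat{\Delta}$ forgets the cyclic operators. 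On the target, by Proposition~\ref{ctopweak}, $p$ is a (trivial) fibration, resp.\ a weak equivalence, in $\textbf{Top}^{SO(2)}$ iff the underlying map $Up$ is a (trivial) Serre fibration, resp.\ a weak equivalence. Finally, recall that cyclic realisation satisfies $U\circ|-|_\mathfrak{C} = |-|\circ i^\ast$ (forgetting the $SO(2)$-action on $|X|_\mathfrak{C}$ returns the realisation of the underlying simplicial set), and import the one geometric input: an $SO(2)$-homeomorphism $|{\Delta \mathfrak{C}}[n]|_\mathfrak{C} \cong SO(2)\times\Delta^n$, natural in $[n]\in\Delta$, with $SO(2)$ acting by translation on the first factor.

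From the last fact, Yoneda and the adjunction $|-|_\mathfrak{C}\dashv S_\mathfrak{C}$ give, naturally in $[n]$ and $Y\in\textbf{Top}^{SO(2)}$,
\begin{align*}
(i^\ast S_\mathfrak{C} Y)_n &= \text{Hom}_{\widehat{{\Delta \mathfrak{C}}}}({\Delta \mathfrak{C}}[n], S_\mathfrak{C} Y) = \text{Hom}_{SO(2)}(|{\Delta \mathfrak{C}}[n]|_\mathfrak{C}, Y) \\
&\cong \text{Hom}_{SO(2)}(SO(2)\times\Delta^n, Y) = \text{Hom}_{\textbf{Top}}(\Delta^n, UY) = (\text{Sing}\,UY)_n,
\end{align*}
so $i^\ast\circ S_\mathfrak{C} \cong \text{Sing}\circ U$ as functors $\textbf{Top}^{SO(2)}\to\widehat{\Delta}$.

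First I would check that $(|-|_\mathfrak{C}, S_\mathfrak{C})$ is a Quillen adjunction, for which it suffices that $S_\mathfrak{C}$ preserve fibrations and trivial fibrations; but $S_\mathfrak{C}(p)$ is a (trivial) fibration in $\widehat{{\Delta \mathfrak{C}}}_\text{Kan}$ iff $i^\ast S_\mathfrak{C}(p) = \text{Sing}(Up)$ is a (trivial) Kan fibration, which holds because $Up$ is a (trivial) Serre fibration and $\text{Sing}$ is right Quillen. Then I would verify the Quillen equivalence via the adjoint-transpose criterion: for a cofibrant (equivalently, normal) cyclic set $X$ and any $SO(2)$-space $Y$, I must show a map $f\colon|X|_\mathfrak{C}\to Y$ is a weak equivalence iff its transpose $g\colon X\to S_\mathfrak{C} Y$ is. Now $g$ is a weak equivalence iff $i^\ast g\colon i^\ast X\to i^\ast S_\mathfrak{C} Y = \text{Sing}\,UY$ is, and $f$ is a weak equivalence iff $Uf\colon U|X|_\mathfrak{C} = |i^\ast X|\to UY$ is; under the identifications $i^\ast S_\mathfrak{C}\cong\text{Sing}\,U$ and $U|-|_\mathfrak{C} = |-|\circ i^\ast$ the maps $i^\ast g$ and $Uf$ are mutually adjoint transposes for $|-|\dashv\text{Sing}$, so, since $i^\ast X$ is a cofibrant simplicial set and $UY$ a fibrant space, the classical equivalence $\widehat{\Delta}_\text{Kan}\rightleftarrows\textbf{Top}$ shows one is a weak equivalence precisely when the other is.

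The hard part — indeed the only non-formal input — is the $SO(2)$-homeomorphism $|{\Delta \mathfrak{C}}[n]|_\mathfrak{C}\cong SO(2)\times\Delta^n$ and its naturality in $\Delta$; one additionally needs the routine check that the resulting isomorphism $i^\ast S_\mathfrak{C}\cong\text{Sing}\,U$ intertwines the units of the two adjunctions, which is what legitimises treating $i^\ast g$ and $Uf$ as adjoint transposes. Everything downstream is formal bookkeeping with the two forgetful functors and the classical realisation--singular equivalence.
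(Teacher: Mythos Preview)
The paper does not give its own proof of this proposition; it is simply quoted from \cite[Corollary 4.3]{homotopycc}. Your argument is correct and is essentially the one found in that reference: both model structures are created by forgetful functors, and the identifications $U\circ|{-}|_\mathfrak{C}\cong|{-}|\circ i^\ast$ and $i^\ast\circ S_\mathfrak{C}\cong\text{Sing}\circ U$ (the latter deduced from the $SO(2)$-homeomorphism $|{\Delta\mathfrak{C}}[n]|_\mathfrak{C}\cong SO(2)\times\Delta^n$) reduce everything to the classical Quillen equivalence $\widehat{\Delta}_\text{Kan}\rightleftarrows\textbf{Top}$. One small remark: in the transpose criterion you should take $Y$ fibrant, but since every object of $\textbf{Top}^{SO(2)}$ is fibrant in the model of Proposition~\ref{ctopweak} this is automatic.
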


Using the fact that a cyclic object is the same as a simplicial objects along with extra datum, we see that a cyclic $\infty$-stack can be viewed as an $\infty$-stack with extra datum.  In light of Proposition~\ref{cyclicequiv}, we therefore see that objects of $\text{Ho}({\Delta \mathfrak{C}} \textbf{-Pr}_\tau(\mathcal{C}))$ can be viewed as $\infty$-stacks along with an $SO(2)$-action.  Hence the terminology of \emph{equivariant}.

One important property of the Kan model structure on crossed simplicial groups is that the pushout-product axiom holds, as a consequence we have that $ {\Delta \mathfrak{G}} \textbf{-Pr}_\tau(\mathcal{C}))$ is a closed monoidal model category.  We now provide a proof of this claim, adapted from \cite[Lemma 2.2.15]{seteklevmaster}.

\begin{lemma}\label{ismonoidalclosed}
For ${\Delta \mathfrak{G}}$ a crossed simplicial group, $\widehat{{\Delta \mathfrak{G}}}_\text{Kan}$ is a monoidal model category.
\end{lemma}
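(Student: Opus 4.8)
The plan is to verify the two conditions of Definition~\ref{monmodcat} for the cartesian monoidal structure $(\widehat{{\Delta \mathfrak{G}}}, \times, \mathbb{A}[0])$ on $\widehat{{\Delta \mathfrak{G}}}_\text{Kan}$. The unit axiom is immediate: the terminal object $\mathbb{A}[0] = i_![0]$ is normal, hence cofibrant, so condition~(2) holds automatically. The entire content is therefore the pushout-product axiom~(1): given normal monomorphisms $f\colon A \to B$ and $g\colon K \to L$, the pushout-product
$$
f \,\square\, g\colon\; (B \times K) \sqcup_{A \times K} (A \times L) \;\longrightarrow\; B \times L
$$
is again a normal monomorphism, and is trivial if either $f$ or $g$ is. Here I would use that $\Delta\mathfrak{G}$ has a strict Reedy structure on its underlying category, so by Remark~\ref{ifstrictthenmomo} the normal monomorphisms are exactly the pointwise monomorphisms; checking that the pushout-product of two monomorphisms is a monomorphism is then a routine diagram chase in the presheaf topos $\widehat{{\Delta \mathfrak{G}}}$ (pullbacks and pushouts are computed objectwise in $\textbf{Set}$, where this is classical). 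The freeness of the automorphism action needed for \emph{normality} of the result follows from Corollary~\ref{normtonorm} together with the observation that a product $B \times L$ of normal objects is normal, since $\text{Aut}(a)$ acts freely on $(B\times L)(a) = B(a)\times L(a)$ as soon as it acts freely on one factor.

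The substantive part is the statement about trivial cofibrations. By Lemma~\ref{normalsuff} and standard closure properties of saturated classes under pushout-product (the pushout-product of a generating set with a fixed cofibration generates, via retracts/transfinite composites/pushouts, the pushout-product with the whole saturated class), it suffices to treat the case where $f$ is a generating trivial cofibration --- a trivial cofibration between countable normal objects --- and $g = \partial\mathbb{A}[a]\to\mathbb{A}[a]$ is a generating cofibration; by symmetry of $\square$ we may also take $f$ a boundary inclusion and $g$ a generating trivial cofibration. I would handle this via the adjunction characterisation: $f\,\square\,g$ is a trivial cofibration iff every fibrant $Z$ (i.e.\ every ${\Delta \mathfrak{G}}$-Kan complex) has the RLP against it, iff $Z^{\mathbb{A}[a]}\to Z^{\partial\mathbb{A}[a]}$ has the RLP against $f$, iff $\underline{\textup{hom}}^\Delta(-,Z)$ applied appropriately yields a trivial fibration of simplicial sets. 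Since $\Delta\mathfrak{G}$ is a crossed simplicial group, the product $\mathbb{A}[a]\times\mathbb{A}[1]$ and the cotensor $Z^{\mathbb{A}[1]}$ are computed using the explicit free structure $i_!(X)_n = \text{Aut}_{\Delta\mathfrak{G}}([n])\times X_n$ described above, which lets one transport the corresponding simplicial statement (the pushout-product axiom for $\widehat{\Delta}_\text{Kan}$) through $i_!$; here one uses (AC2), $i_!(X)\times i_!(Y)\simeq i_!(X\times Y)$, to commute products past $i_!$.

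The main obstacle I anticipate is precisely this last transport step: showing that the pushout-product of a horn inclusion with a boundary inclusion in $\widehat{{\Delta \mathfrak{G}}}$ is anodyne. In the simplicial world this is the classical fact that $\Lambda^k[n]\to\Delta[n]$ pushout-product $\partial\Delta[m]\to\Delta[m]$ is a trivial cofibration, but the ${\Delta \mathfrak{G}}$-cells $\mathbb{A}[a]$ are \emph{not} in the image of $i_!$ (that image consists of free cells $\text{Aut}_{\Delta\mathfrak{G}}([n])\times\Delta[n]$, whereas $\mathbb{A}[a]$ has trivial automorphism decoration on its top cell), so one cannot simply quote the simplicial result. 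The fix, following \cite[Lemma 2.2.15]{seteklevmaster}, is to filter $\mathbb{A}[a]$ and $\partial\mathbb{A}[a]$ by their non-degenerate cells ordered by degree and automorphism orbit --- exactly the skeletal filtration afforded by the EZ-structure --- and to check that each successive attachment is (a pushout of) a horn inclusion, reducing cell-by-cell to the simplicial case where the decoration is now constant along the attaching map. I would also invoke Corollary~\ref{equivariantcorr} / Proposition~\ref{csweak} and the identification $\widehat{{\Delta \mathfrak{C}}}_\text{Kan}\simeq\widehat{{\Delta \mathfrak{C}}}_\text{DHK}$ as a sanity check in the cyclic case: there, $i^\ast$ detects weak equivalences and fibrations, and $i^\ast$ is monoidal, so the pushout-product axiom for $\widehat{{\Delta \mathfrak{C}}}_\text{Kan}$ follows from that for $\widehat{\Delta}_\text{Kan}$ --- a cleaner argument that works verbatim for any crossed simplicial group because $i^\ast\colon\widehat{{\Delta \mathfrak{G}}}\to\widehat{\Delta}$ is always strong monoidal, creates weak equivalences and fibrations, and sends the generating (trivial) cofibrations of $\widehat{{\Delta \mathfrak{G}}}_\text{Kan}$ into (trivial) cofibrations after the pushout-product is formed. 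I expect the final written proof to take this second route, with the cellular filtration argument as the fallback if the monoidality of $i^\ast$ needs to be spelled out.
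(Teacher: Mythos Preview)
Your treatment of the trivial-cofibration half of the pushout-product axiom is essentially the paper's argument: apply $i^\ast$, use that it preserves products and pushouts, and reduce to the monoidality of $\widehat{\Delta}_\text{Kan}$. That part is fine.

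There are, however, two genuine errors elsewhere.

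\textbf{The unit axiom is not immediate.} You assert that the monoidal unit is the terminal object, that the terminal object equals $\mathbb{A}[0]=i_!\Delta[0]$, and that this is normal. The identification of the unit with the terminal object is correct (the product is cartesian), and $\mathbb{A}[0]=i_!\Delta[0]$ is indeed normal --- but $\mathbb{A}[0]$ is \emph{not} the terminal object. One computes $\mathbb{A}[0]_n=\operatorname{Hom}_{\Delta\mathfrak{G}}([n],[0])\cong\mathfrak{G}_n$, which is a singleton only when $\mathfrak{G}_n$ is trivial. The actual terminal presheaf is the constant singleton, whose only non-degenerate cell lies in degree~$0$; for this to be normal one needs $\mathfrak{G}_0$ to act freely on a point, i.e.\ $\mathfrak{G}_0=1$. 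This fails already for the dihedral and reflexive crossed simplicial groups, where $\mathfrak{G}_0\cong\mathbb{Z}/2$. The paper therefore gives a genuine argument: take a normalisation $I'\to I$ of the unit, observe that $0\to I$ is a horn inclusion (hence a trivial cofibration), and use two-out-of-three together with the fact that $i^\ast$ detects weak equivalences to conclude $I'\times X\to I\times X$ is a weak equivalence.

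\textbf{The appeal to Remark~\ref{ifstrictthenmomo} is invalid.} You write that $\Delta\mathfrak{G}$ ``has a strict Reedy structure on its underlying category, so the normal monomorphisms are exactly the pointwise monomorphisms''. But the whole point of a crossed simplicial group is that $\Delta\mathfrak{G}$ has non-trivial automorphisms: it is a \emph{generalised} Reedy category in the sense of Definition~\ref{def:genreedy}, not a strict one, and Remark~\ref{ifstrictthenmomo} does not apply. Normality is a genuine condition here. The paper checks it directly: an element $(y,y')\in Y\times Y'$ outside the image of the pushout-product has $y\notin\operatorname{im}(f)$ and $y'\notin\operatorname{im}(f')$, and normality of $f$ already forces $\mathfrak{G}_n$ to act freely on $y$, hence (diagonally) on $(y,y')$. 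Your fallback remark that a product of normal objects is normal is in the right spirit, but it does not by itself cover the relative statement for the pushout-product map; the paper's elementwise argument is what is actually needed.
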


\begin{proof}
We first show that the pushout-product axiom holds.  We need to show that given any pair of cofibrations $f \colon X \to Y$ and $f' \colon X' \to Y'$, their pushout-product
$$f \boxtimes f' \colon (X \times Y') \bigsqcup_{X \times X'} (Y \times X')\to Y \times Y'$$
is a cofibration that is trivial whenever $f$ or $f'$ is. The condition amounts to considering the following pushout diagram
\begin{equation}\label{cyclicpop}
\xymatrix{X \times X' \ar[r]^{f'_\ast} \ar[d]_{f_\ast} & X \times Y' \ar[d] \ar@/^/[ddr]^{f'^\ast} \\
Y \times X' \ar[r] \ar@/_/[rrd]_{f'^\ast} & P \ar[dr]|-{f \boxtimes f'} \\
&& Y \times Y'}
\end{equation}
By the universal condition of pushouts, we have that $P$ is represented by pairs $(y,x') \in Y \times X'$ and $(x,y') \in X \times Y'$ subject to the relation $(f(x),x') \sim (x,f'(x'))$.  We first show that $f_\ast$ and $f'_\ast$ are cofibrations.  We already have that both of these maps are monomorphisms, furthermore, if $(y,x')$ is not in the image of $f_\ast$, then $y$ is not in the image of $f$, which is the normality condition (this shows that $f_\ast$ is a cofibration, $f'_\ast$ follows similarly).  Next we will show that $f \boxtimes f'$ is a cofibration.  It is clearly a monomorphism as both $f$ and $f'$ are monomorphisms.  Let $p \in Y \times Y'$ be an element such that it is not in the image of $f \boxtimes f'$.  Therefore $p$ is represented by an element not in the image of either $f_\ast$ or $f'_\ast$.  However, since we have that these maps are cofibrations, the $\mathfrak{G}_n$ acts freely on $p$ and $f \boxtimes f'$ is therefore a cofibration.

All that is left to show is the condition about the trivial cofibrations.  However, Diagram \ref{cyclicpop} induces a pushout diagram in $\widehat{\Delta}_\text{Kan}$ via the forgetful functor $i^\ast$.  As $\widehat{\Delta}_\text{Kan}$ itself is a monoidal model category, we can conclude that the pushout-product is a weak equivalence if $f$ or $f'$ is such.

We now need to show that the unit axiom holds.  Let $X$ be a normal object, then we need to show that ${\Delta \mathfrak{G}}[0]' \times X \to {\Delta \mathfrak{G}}[0] \times X$ is a weak equivalence, where ${\Delta \mathfrak{G}}[0]'$ is the normalisation of the unit.  This is true when $i^\ast({\Delta \mathfrak{G}}[0]' \times X) \to i^\ast({\Delta \mathfrak{G}}[0] \times X)$ is a weak equivalence of simplicial sets, and as right adjoints preserve products, this is equivalent to asking that $i^\ast({\Delta \mathfrak{G}}[0]') \times i^\ast(X) \to i^\ast({\Delta \mathfrak{G}}[0]) \times i^\ast(X)$ is a weak equivalence of simplicial sets.  In particular, we only require that $i^\ast({\Delta \mathfrak{G}}[0]') \to i^\ast({\Delta \mathfrak{G}}[0])$ is a weak equivalence.  Consider the composition $0 \to {\Delta \mathfrak{G}}[0]' \to {\Delta \mathfrak{G}}[0]$, this is a horn inclusion  and therefore is a trivial cofibration, and in particular a weak equivalence.  Consequently, by two-out-of-three,  the map $i^\ast({\Delta \mathfrak{G}}[0]') \to i^\ast({\Delta \mathfrak{G}}[0])$ is a weak equivalence as required.  Therefore the unit axiom holds.
\end{proof}

\subsection{Examples}

\begin{example}[Lifting (1-)stacks to $SO(2)$-equivariant stacks]\leavevmode

This example will give a very brief overview general machinery for constructing $SO(2)$-equivariant stacks, taken from~\cite{balchin}.  For a groupoid $\mathcal{G}$, one can construct its \emph{cyclic nerve} $N^\mathfrak{C}\mathcal{G}$ which is the cyclic object which in dimension $n$ has diagrams of the form:
$$\xymatrix{x_0 \ar[r]^{a_1} & x_1 \ar[r]^{a_1} & \cdots \ar[r]^{a_{n}}& x_n \rlap{ ,}}$$
with the cyclic operator $\tau_{n}$ being defined as follows:
$$\xymatrixcolsep{3pc}\xymatrix{x_n \ar[r]^{(a_n \cdots a_1)^{-1}} & x_0 \ar[r]^{a_1} \ar[r] & \cdots \ar[r]^-{a_{n-1}} & x_{n-1} \rlap{ .}}$$
If we are given a (1-)stack $\mathcal{X} \colon \mathcal{C} \to \textbf{Grpd}$, we can apply the cyclic nerve component-wise to get a cyclic stack $N^\mathfrak{C}\mathcal{X} \colon \mathcal{C} \to \widehat{\Delta \mathfrak{C}}$.  In~\cite[\S 5]{balchin}, such a construction was used to construct the $SO(2)$-equivariant derived stack of local systems of an $SO(2)$-space.  Such a construction can be done for a handful of other crossed simplicial groups with the nerves defined in~\cite{surface}.
\end{example}

\begin{example}[Equivariant cohomology theories]
It is a well known fact that one can encode cohomology theories as mapping spaces in categories of $\infty$-stacks.  We can adjust this mantra to the equivariant setting.  Let $A$ be a sheaf of abelian groups. We define the $\mathfrak{G}$-equivariant cohomology of a site $(\mathcal{C}, \tau)$ to be
$$H^n_{\mathfrak{G}}(\mathcal{C};A) := \pi_0 \left( \Delta \mathfrak{G} \text{-} \mathbb{R}_\tau \underline{\textup{Hom}}(\ast,K^\mathfrak{G}(A,n)) \right),$$
where $K^\mathfrak{G}(A,n))$ is a $\Delta \mathfrak{G}$-version of the Eilenberg-Mac Lane space.  A full description of this for the cyclic case is given in~\cite{balchin2}, where a chain of Quillen equivalences is given to Borel type cohomology theory.
\end{example}

\begin{remark}
It seems that for any augmentation category $\mathbb{A}$, we can discuss the theory of $\textup{Ho}(\mathbb{A}_\text{Kan})$-cohomology theory.  The obstruction to being able to do this in generality is the construction of the correct Eilenberg-Mac Lane spaces.
\end{remark}

\section{Stable Stacks}\label{sec:7}

Our second example will be that of dendroidal sets as introduced in~\cite{dendroidal}.  Unfortunately, we do not currently have any sensible examples of stable derived stacks, but nonetheless, we prove that the theory does exist.  A comprehensive and readable overview of the theory of dendroidal sets can be found in~\cite{MR2778589}. We will begin by recalling the necessary theory.

\begin{definition}
The category $\Omega$ of trees as objects finite rooted trees, i.e., an object of $\Omega$ is of the form:

\begin{figure}[h]
\centering
{\scalefont{1.0}
\begin{tikzpicture}
\node [fill,circle,draw,inner sep = 0pt, outer sep = 0pt, minimum size=2mm, label=right:$\alpha$] (1) at (0,0) {};
\node [fill,circle,draw,inner sep = 0pt, outer sep = 0pt, minimum size=2mm, label=left:$\beta$] (2) at (-1,1) {};
\node [fill,circle,draw,inner sep = 0pt, outer sep = 0pt, minimum size=2mm, label=right:$\gamma$] (3) at (1,1) {};
\draw (1) edge node [left=1mm] {b} (2);
\draw (0,-1) edge node [left=1mm] {a}  (1);
\draw (1) edge node [right=1mm] {d} (1,1);
\draw(2) edge node [right=1mm] {f} (0,2);
\draw(2) edge node [left=1mm] {e} (-2,2);
\draw (1) edge node [right=1mm]{c} (0,1);
\end{tikzpicture}}
\end{figure}

 Any such tree $T$ generates a symmetric coloured operad $\Omega(T)$ whose edge set $E(T)$ of the edges of $T$.  The morphisms $T \to T'$ in $\Omega$ are the maps of the symmetric coloured operads $\Omega(T) \to \Omega(T')$.  The category of \emph{dendroidal sets} is the presheaf category $\widehat{\Omega}$.
\end{definition}

\begin{proposition}
The category $\Omega$ is an augmentation category.
\end{proposition}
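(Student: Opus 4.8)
The plan is to verify the three axioms (AC1)--(AC3) of Definition~\ref{augdef} for $\mathbb{A} = \Omega$, in direct analogy with the proof given above for crossed simplicial groups. For (AC1), I would invoke the fact that $\Omega$ is a (strict) generalised Reedy category which is in fact an EZ-category: this is established in the dendroidal literature (see~\cite{genreedy} and~\cite{MR2778589}), where the degree of a tree is its number of vertices, the face operators are the monomorphisms (generated by inner/outer/degeneracy faces dual to the elementary operad maps), and every dendroidal operator factors as a split epimorphism (degeneracy) followed by a monomorphism (face). The symmetric promagmoidal structure is supplied by the Boardman--Vogt tensor product $\otimes_{\mathrm{BV}}$ on $\widehat{\Omega}$, which is symmetric with unit $\Omega[\eta]$ (the tree with no vertices); so $(\widehat{\Omega}, \otimes_{\mathrm{BV}}, \Omega[\eta])$ witnesses (AC1). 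The key feature distinguishing this example from the crossed-simplicial one is precisely that the relevant tensor structure is \emph{not} the cartesian product.

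For (AC2), I would use the standard inclusion $i \colon \Delta \hookrightarrow \Omega$ sending $[n]$ to the linear tree $L_n$ with $n$ vertices (the $n$-corolla string), which is a faithful functor of EZ-categories preserving degree. The compatibility condition $i_!(X) \otimes_{\mathrm{BV}} i_!(Y) \simeq i_!(X \times Y)$ is a known property of the Boardman--Vogt tensor product restricted to linear trees: on representables, $\Omega[L_m] \otimes_{\mathrm{BV}} \Omega[L_n] \cong i_!(\Delta[m] \times \Delta[n])$ because the Boardman--Vogt tensor of two linear operads recovers the operad of the poset $[m]\times[n]$, and both $i_!$ and $\otimes_{\mathrm{BV}}$ preserve colimits, so the isomorphism extends from representables to all dendroidal sets by the usual density argument. (References: Moerdijk--Weiss~\cite{dendroidal}, \cite{MR2778589}; this is where I would cite the precise statement rather than reprove it.)

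For (AC3), the statement is that the pushout--product of two normal monomorphisms in $\widehat{\Omega}$ is again a normal monomorphism provided one of them is linear, i.e.\ built from boundary inclusions $\partial\Omega[L_n] \to \Omega[L_n]$. This is a \emph{weaker} statement than the full pushout--product axiom for $(\widehat{\Omega}, \otimes_{\mathrm{BV}})$, which is known to fail in general (the Boardman--Vogt tensor product of two non-linear normal monomorphisms need not be normal); but it \emph{does} hold when one factor is linear, and this is exactly the content of the ``linear'' pushout--product results in the dendroidal literature, e.g.\ \cite[\S 9]{MR2778589} or the corresponding statements in~\cite{basicthesis}. The strategy is: by Lemma~\ref{normalmonocound} reduce both $f$ and $g$ to generating boundary inclusions $\partial\Omega[S] \to \Omega[S]$ (with $S$ linear for the linear one), since the saturated-class closure properties propagate pushout--products; then analyse $\partial\Omega[L_n] \otimes_{\mathrm{BV}} \Omega[T] \sqcup_{\partial\Omega[L_n]\otimes_{\mathrm{BV}}\partial\Omega[T]} \Omega[L_n]\otimes_{\mathrm{BV}}\partial\Omega[T] \to \Omega[L_n]\otimes_{\mathrm{BV}}\Omega[T]$ using the combinatorial description of the non-degenerate elements (``shuffles'' of the two trees), checking in each case that the isotropy group of each new non-degenerate element is trivial, which holds because $L_n$ contributes no nontrivial tree automorphisms.

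\textbf{Main obstacle.} The genuinely technical step is (AC3): one must control the normality (free isotropy) of the non-degenerate cells of a Boardman--Vogt tensor product of trees, and the standard dendroidal ``shuffle'' arguments are intricate. I expect the cleanest route is to cite the linear pushout--product lemma from~\cite{MR2778589} or~\cite{basicthesis} essentially verbatim, after matching notation ($\Omega[n] \leftrightarrow \mathbb{A}[a]$, linear trees $\leftrightarrow i_![n]$); this is consistent with the Remark in Section~\ref{sec:4} that the augmentation-category axioms were designed so that the dendroidal model-structure arguments transfer with $\Omega$ in place of a general $\mathbb{A}$. The (AC2) compatibility, while conceptually the heart of why $\Omega$ is a genuine augmentation of $\Delta$, is routine once one knows $\otimes_{\mathrm{BV}}$ restricts correctly to linear trees, and (AC1) is already in the literature.
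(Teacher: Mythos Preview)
Your proposal is correct and follows essentially the same route as the paper: verify (AC1)--(AC3) by citing the EZ-structure on $\Omega$ from \cite{genreedy}, the Boardman--Vogt tensor product, the linear-tree inclusion and its monoidal compatibility from \cite{dendroidal}, and the linear pushout--product lemma from the dendroidal literature. The only noteworthy difference is that for (AC3) the paper points specifically to \cite{erratadendroidal} rather than \cite{MR2778589} or \cite{basicthesis}, since it is precisely the corrected statement in the erratum (after the failure of the full pushout--product axiom was discovered) that matches the linear-factor hypothesis of (AC3); you clearly know this, but your citation should reflect it.
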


\begin{proof}
We check that $\Omega$ satisfies the conditions (AC1) -- (AC3) appearing in Definition~\ref{augdef}:
\begin{enumerate}
\item[(AC1)] First we require that $\Omega$ is an EZ-category.  This is given in \cite[Example 7.6(c)]{genreedy}, but we will elaborate on some of the details here.  The degree function $d \colon \Omega \to \mathbb{N}$ is given by $d(T) = \#\{\text{vertices of }T\}$.  Every morphism in $\Omega$ can be decomposed as an automorphism (arising from considering different planar structures) followed by a series of face and degeneracy maps \cite[Lemma 3.1]{dendroidal}.  There is a category $\Omega_\text{planar}$ which fixes a planar representation of the trees, and is a strict EZ-category.  By the description of the morphisms in $\Omega$, we see that it is a crossed $\Omega_\text{planar}$ group, and by Lemma~\ref{crossisez} is a generalised EZ-category.  The degeneracy and face operators that appear in the morphism structure of $\Omega$ are exactly those we use in the EZ-structure.  We also require a monoidal product $\square$ which gives $\Omega$ the structure of a quasi-monoidal EZ-category.  In this case $\square = \otimes$, the \emph{Boardman-Vogt tensor product} on dendroidal sets as described in \cite[\S5]{dendroidal}.  We will not describe the construction of the tensor product here. 
\item[(AC2)] Next, we require an inclusion $i \colon \Delta \hookrightarrow \Omega$ which is compatible with the monoidal structure.  The inclusion is given by considering the object $[n]$ as the linear tree $L_n$ which has $n+1$ vertices and $n$ edges.  The compatibility of the tensor product with the monoidal structure on simplicial sets is given in \cite[Proposition 5.3]{dendroidal}.
\item [(AC3)] The proof of AC3 is highly technical, relying on the shuffle of trees.  However, the property is proved as the main result of~\cite{erratadendroidal}, and we will not concern ourself with the details here.
\end{enumerate}
\end{proof}

Again, we explain the construction of the left adjoint $i_!$.  In this case, it is a restriction functor, so the left adjoint is an extension by zero, which sends a simplicial set $X$ to the dendroidal set defined by
$$i_!(X)_T = \begin{cases}X_n &\mbox{if } T \simeq i([n]) \\ 
\emptyset & \mbox{otherwise} \end{cases}.$$

\begin{corollary}\label{dendroidalcorr}\leavevmode
\begin{itemize}
\item There is a Quillen model structure on the category of dendroidal sets, denoted $\widehat{\Omega}_\text{Kan}$, where the fibrant objects are the dendroidal Kan complexes, and the cofibrations are the normal monomorphisms.  Moreover there is a Quillen adjunction 
$$i_! \colon \widehat{\Delta}_\text{Kan} \rightleftarrows \widehat{\Omega}_\text{Kan} \colon i^\ast.$$
\item There is a local model structure on the category of dendroidal presheaves, denoted $\Omega \textbf{-Pr}_\tau(\mathcal{C})$ obtained as the left Bousfield localisation of the point-wise Kan model at the class of dendroidal hypercovers. Moreover there is a Quillen adjunction
$$i_! : \textbf{sPr}_\tau(\mathcal{C}) \rightleftarrows \Omega \textbf{-Pr}_\tau(\mathcal{C}) : i^\ast.$$
\item The category of $(\Omega,n)$-geometric derived Artin stacks, denoted $\Omega \text{-} \mathcal{G}_n^\text{sm}(\textbf{dAff})$, is given as the homotopy category of derived $(\Omega,n)$-hypergroupoids with respect to the class of trivial $(\Omega,n)$-hypergroupoids. 
\end{itemize}
\end{corollary}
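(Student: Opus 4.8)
The plan is to read Corollary~\ref{dendroidalcorr} as nothing more than the specialisation of the general machinery of Sections~\ref{sec:4} and~\ref{sec:5} to the augmentation category $\mathbb{A} = \Omega$, whose membership in the class of augmentation categories is exactly the content of the preceding proposition. So the work has already been done; what remains is to invoke the right general statements in the right order and to pin down the concrete form of $i_!$.

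For the first bullet I would apply the existence theorem for $\widehat{\mathbb{A}}_\text{Kan}$ with $\mathbb{A} = \Omega$: this produces the cofibrantly generated model structure $\widehat{\Omega}_\text{Kan}$ whose cofibrations are the normal monomorphisms (Definition~\ref{thecofibrations}), weak equivalences as in Definition~\ref{weakeqdeff}, and fibrations as in Definition~\ref{fibdef}. The identification of the fibrant objects with the $\Omega$-Kan (dendroidal Kan) complexes is then the proposition characterising fibrant objects in $\widehat{\mathbb{A}}_\text{Kan}$, again with $\mathbb{A}=\Omega$. The Quillen adjunction $i_! \colon \widehat{\Delta}_\text{Kan} \rightleftarrows \widehat{\Omega}_\text{Kan} \colon i^\ast$ is Lemma~\ref{quiladj1} applied to the inclusion $i \colon \Delta \hookrightarrow \Omega$, with $i_!$ being the extension-by-zero functor recorded just above the corollary; here one also uses that $i_!$ is compatible with the tensor products, which is \cite[Proposition 5.3]{dendroidal}, since that compatibility is what underlies the simplicial weak enrichment on which the model structure rests.

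For the second bullet I would apply the definition of the local model structure from Section~\ref{auglocmodels}: since $\widehat{\Omega}$ is a presheaf category (hence combinatorial) and left-proper by the left-properness proposition of Section~\ref{sec:4}, the projective point-wise model $\Omega\textbf{-Pr}_\text{proj}(\mathcal{C})$ admits a left Bousfield localisation, and localising at the class of dendroidal hypercoverings (Definition~\ref{aughyper} with $\mathbb{A}=\Omega$) gives $\Omega\textbf{-Pr}_\tau(\mathcal{C})$. The Quillen adjunction $i_! \colon \textbf{sPr}_\tau(\mathcal{C}) \rightleftarrows \Omega\textbf{-Pr}_\tau(\mathcal{C}) \colon i^\ast$ is then the lemma proved in Section~\ref{auglocmodels}, whose only real input is that $i_!$ carries simplicial hypercovers to dendroidal hypercovers. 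The third bullet is pure unwinding of definitions: $\Omega\text{-}\mathcal{G}_n^\text{sm}(\textbf{dAff})$ is the homotopy category of the relative category of derived Artin $(\Omega,n)$-hypergroupoids with the trivial $(\Omega,n)$-hypergroupoids as weak equivalences, which is precisely $\mathbb{A}\text{-}\mathcal{G}_n^\text{sm}(\textbf{dAff})$ for $\mathbb{A}=\Omega$.

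I do not expect any genuine obstacle internal to this corollary. All the substantive content sits upstream: the homotopy theory is supplied wholesale by Sections~\ref{sec:4}--\ref{sec:5}, and the one non-formal hypothesis, namely (AC3) for $\Omega$ (the pushout-product compatibility that rests on the shuffle-of-trees analysis of \cite{erratadendroidal}), has already been verified in the preceding proposition. The only points requiring a moment's care are the explicit description of $i_!$ as extension by zero and its tensor-compatibility, both of which are immediate from \cite{dendroidal}.
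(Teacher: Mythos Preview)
Your proposal is correct and matches the paper's approach: the corollary is stated without proof in the paper precisely because it is the immediate specialisation of Sections~\ref{sec:4}--\ref{sec:5} to $\mathbb{A}=\Omega$, once the preceding proposition has established that $\Omega$ is an augmentation category. Your more detailed unpacking of which general statements are being invoked (the existence theorem for $\widehat{\mathbb{A}}_\text{Kan}$, the fibrant-object characterisation, Lemma~\ref{quiladj1}, the local model definition, and the hypergroupoid construction) is exactly what the paper leaves implicit.
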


\begin{remark}
As $\Omega_\text{planar}$ is also an augmentation category, it is possible to replace all symbols $\Omega$ by $\Omega_\text{planar}$ in Corollary~\ref{dendroidalcorr}.  Note that also because $\Omega_\text{planar}$ is a strict EZ-category, this model structure will be of Cisinski type.
\end{remark}

\begin{definition}
The augmented homotopical algebraic geometry theory arising from $\Omega$ will be referred to as \emph{stable}.  For example, the category $\text{Ho}(\Omega \textbf{-Pr}_\tau(\mathcal{C}) )$ will be referred to as the category of \emph{stable $\infty$-stacks}.
\end{definition}

\begin{remark}
Unfortunately, unlike the simplicial and crossed simplicial cases, the model structure $\widehat{\Omega}_\text{Kan}$ is not a closed monoidal category, which is the reason for asking that an augmentation category be only promagmoidal, and not promonoidal. This produced several errors in the existing literature, as it was assumed that the pushout-product axiom holds.  This error was spotted and corrected in the errata of~\cite{cisinski}, with details appearing in~\cite{erratadendroidal}.  However, it is possible to restrict to the category $\Omega_o$ of those trees which have no bald vertices.  In this case, the pushout-product axiom does hold as shown in~\cite[Corollary 2.5]{erratadendroidal}.  It is also true that $\Omega_o$ is an augmentation category.  However, using this category for the purpose of augmented homotopical algebraic geometry is still unreasonable as there is no known description of the homotopy type of $\Omega_o$.
\end{remark}

We finish this section by discussing the homotopy type of $\widehat{\Omega}_\text{Kan}$.  This section will lead to the choice of the quantifier $\emph{stable}$ to describe $\Omega$-augmentation.  The results in this section  are built on~\cite{basicthesis}, where the model $\widehat{\Omega}_\text{Kan}$ was explicitly described.  Note that in the aforementioned reference, the Kan model structure is called the \emph{stable model structure}.

We first highlight a difference to what one may find in the literature.  The model structure on $\widehat{\Omega}$ that one usually encounters is the \emph{operadic} model structure (sometimes referred to the \emph{Cisinski-Moerdijk} structure). The fibrant objects in this model structure are the \emph{inner Kan complexes}.  This model structure is Quillen equivalent to a certain model structure on $\textbf{sOperad}$.  It is shown in~\cite{basicthesis} that the Kan model structure is a localisation of this model structure at the collection of \emph{outer horns}.  One can easily see the duality between this construction and the relationship between $\widehat{\Delta}_\text{Kan}$ and $\widehat{\Delta}_\text{Joyal}$.

Recall from~\cite{MR1361893} that a \textit{spectrum} $\underline{E} = \{E_i\}_{i \in \mathbb{Z}}$ is a sequence of based spaces $E_n$ and based homeomorphisms $E_i \simeq \Omega E_{i+1}$.  We say that a spectrum is \textit{connective} if $E_i = 0$ for $i < 0$. We can view a connective spectrum as an infinite loop space via delooping machinery as in~\cite{MR0339152}, or equally as a $\Gamma$-space as in~\cite{MR0353298}.  We will denote by $\textbf{ConSp(Top)}$ the category of connective spectra.

\begin{proposition}[\cite{MR513569}, Proposition  2.2]
There is a model structure on the category \textbf{ConSp(Top)}, called the \textit{stable model structure}, where a map $f \colon X \to Y$ is a:
\begin{itemize}
\item Weak equivalence if $f_\ast \colon \pi_\ast X \simeq \pi_\ast Y$ where $\pi_\ast X = \displaystyle{\lim_{\to k}} \pi_{\ast +k} X_n$.
\item Fibration if it has the RLP with respect to trivial cofibrations.
\item Cofibration if $f_n \colon X_n \to Y_n$ is a cofibration of spaces for all $n \geq 0$.
\end{itemize}
\end{proposition}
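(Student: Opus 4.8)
The plan is to produce the stable model structure by the classical two-step procedure: build a levelwise model structure by transfer, then Bousfield-localise it at the stable equivalences. Reading $\textbf{ConSp(Top)}$ as the category $\mathrm{Sp}^{\ge 0}$ of connective prespectra — sequences $X=\{X_n\}_{n\ge 0}$ of based spaces with structure maps $\Sigma X_n\to X_{n+1}$ and $\pi_i X=0$ for $i<0$, the strict condition $X_n\cong\Omega X_{n+1}$ of the statement then describing (up to levelwise weak equivalence) the fibrant objects — the evaluation functors $\mathrm{ev}_n\colon\mathrm{Sp}^{\ge 0}\to\textbf{Top}_\ast$ have left adjoints $F_n$. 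First I would invoke Kan's recognition (transfer) theorem to obtain the \emph{levelwise} model structure: generating cofibrations $F_n(S^{k-1}_+\hookrightarrow D^k_+)$, generating trivial cofibrations $F_n(D^k_+\hookrightarrow (D^k\times I)_+)$, with weak equivalences and fibrations created levelwise by $\textbf{Top}$. This structure is cofibrantly generated and left proper because $\textbf{Top}$ is; the connectivity constraint is harmless, as one may either work with connected spaces at each level or post-compose with connective covers, which preserves the cell structures in play.

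Next I would introduce the stabilisation endofunctor $Q$ with $(QX)_n=\operatorname{colim}_k\Omega^k X_{n+k}$ (formed after a levelwise fibrant replacement, so that the colimit is homotopically meaningful), together with the natural transformation $\eta\colon\mathrm{id}\to Q$; then $QX$ is an $\Omega$-spectrum, and I declare $f$ a \emph{stable equivalence} when $Qf$ is a levelwise weak equivalence, equivalently when $f_\ast\colon\pi_\ast X\to\pi_\ast Y$ is an isomorphism for $\pi_\ast X=\operatorname{colim}_k\pi_{\ast+k}X_n$, matching the definition in the statement. The core of the proof is the verification of the Bousfield–Friedlander axioms for the pair (levelwise structure, $Q$): (A1) $Q$ carries levelwise weak equivalences to levelwise weak equivalences; (A2) $\eta_X$ is a stable equivalence and both $Q\eta_X$ and $\eta_{QX}$ are levelwise weak equivalences (homotopy idempotence); (A3) the class of stable equivalences is closed under base change along levelwise fibrations with $Q$-local base. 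Granting these, the Bousfield–Friedlander machine returns a model structure on $\mathrm{Sp}^{\ge 0}$ whose cofibrations are the levelwise cofibrations, whose weak equivalences are the stable equivalences, whose fibrations are the levelwise fibrations $p\colon X\to Y$ for which the naturality square from $p$ to $Qp$ is a levelwise homotopy pullback, and whose fibrant objects are precisely the $\Omega$-spectra.

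Translating back to $\textbf{ConSp(Top)}$ — or, equally, routing the whole argument through the Quillen equivalence with $\Gamma$-spaces, respectively the infinite-loop-space recognition principle — then yields the asserted model structure: cofibrations are the maps that are cofibrations of based spaces at every level, fibrations are defined by the right lifting property against trivial cofibrations, and weak equivalences are the $\pi_\ast$-isomorphisms, which on objects already satisfying $X_n\cong\Omega X_{n+1}$ collapse to isomorphisms on $\pi_\ast X_0$. I expect the genuine obstacle to be axiom (A3), the stability of stable equivalences under the relevant base changes, hand in hand with showing that $Q$ is homotopically idempotent and lands in $\Omega$-spectra; these are the only non-formal inputs, whereas cofibrant generation, left properness, and the descent to the connective subcategory are routine once the levelwise structure is in hand.
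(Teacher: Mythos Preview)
The paper does not prove this proposition at all: it is merely quoted, with attribution to \cite{MR513569}, Proposition~2.2, and no argument is supplied. There is therefore nothing in the paper to compare your proposal against.

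For what it is worth, your outline is essentially the original Bousfield--Friedlander argument from the cited reference: build the strict (levelwise) model structure on prespectra and then localise via the $Q$-functor, checking axioms (A1)--(A3). That is the standard route and your sketch of it is accurate, including the identification of (A3) as the substantive step. One small caveat: your reading of $\textbf{ConSp(Top)}$ as prespectra with the $\Omega$-spectrum condition recovered only on fibrant objects is the right move for the model-categorical argument, but you should flag explicitly that the paper's phrasing (``based homeomorphisms $E_i\simeq\Omega E_{i+1}$'') literally describes only the fibrant objects of the resulting structure, so some care is needed in stating exactly which category carries the model structure.
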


\begin{proposition}[\cite{MR3349322}, Theorem 5.4]\label{conspec}
The Kan model structure on dendroidal sets is Quillen equivalent to the stable model structure on connective spectra.
\end{proposition}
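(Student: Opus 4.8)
This is Theorem~5.4 of~\cite{MR3349322}, so the plan is to recall the shape of that argument; it is organised as a zig-zag of Quillen equivalences whose pivot is the homotopy theory of grouplike $E_\infty$-spaces, equivalently of very special $\Gamma$-spaces. The starting point is the description of $\widehat{\Omega}_\text{Kan}$ recalled above: its fibrant objects are the (fully) dendroidal Kan complexes, and the model structure is the left Bousfield localisation of the operadic Cisinski--Moerdijk model structure on $\widehat{\Omega}$ at the set of outer horn inclusions. Since the operadic model structure is Quillen equivalent to a model structure on $\textbf{sOperad}$, a dendroidal set presents an $\infty$-operad, and the role of the outer-horn localisation is to restrict to the $\infty$-operads whose spaces of operations are, informally, \emph{invertible} — the grouplike ones.

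First I would make the pivot precise. The values of a dendroidal Kan complex on the corollas $C_0, C_1, C_2, \dots$ of $\Omega$, together with the symmetric group actions and the operadic composition data visible in the trees with two vertices, assemble into a grouplike $E_\infty$-space — equivalently, a very special $\Gamma$-space — the horn-filling conditions being exactly what forces the Segal maps to be weak equivalences and the monoid of components to be a group. Upgrading this to a Quillen pair and matching the weak equivalences on the two sides is the heart of~\cite{basicthesis,MR3349322}: on the dendroidal side the relevant invariants are the ``stable homotopy groups'' of a Kan complex, defined as a filtered colimit of homotopy classes of maps out of trees; on the $\Gamma$-space side they are the stable homotopy groups of the associated spectrum; and one verifies that the comparison functor carries one to the other, producing a Quillen equivalence between $\widehat{\Omega}_\text{Kan}$ and the model category of very special $\Gamma$-spaces.

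The final step is classical: the recognition principle for infinite loop spaces — the delooping machinery of~\cite{MR0339152}, or Segal's $\Gamma$-space machine of~\cite{MR0353298} in its model-categorical Bousfield--Friedlander guise — furnishes a Quillen equivalence between very special $\Gamma$-spaces and $\textbf{ConSp(Top)}$ equipped with the stable model structure defined above. Composing the zig-zag gives the claim. The main obstacle is the middle paragraph: identifying precisely which dendroidal Kan complexes arise in this way and checking that the derived unit and counit of the comparison are weak equivalences. This is exactly where one must control the interaction of the outer horns with the Boardman--Vogt tensor product of~\cite{dendroidal} — and steer around the bald-vertex subtleties flagged in~\cite{erratadendroidal} — and where the explicit combinatorial analysis of $\widehat{\Omega}_\text{Kan}$ carried out in~\cite{basicthesis} is indispensable.
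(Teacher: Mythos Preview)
The paper does not actually prove this proposition: it is stated with the citation to \cite[Theorem~5.4]{MR3349322} and no argument is given, since the result is imported from the literature rather than established here. Your proposal therefore goes well beyond what the paper does --- you sketch the zig-zag of Quillen equivalences through grouplike $E_\infty$-spaces / very special $\Gamma$-spaces that underlies the cited theorem, which is a reasonable high-level summary of the Ba\v{s}i\'{c}--Nikolaus argument, but none of this appears in the present paper. For the purposes of matching the paper, a one-line reference to \cite{MR3349322} is all that is required.
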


\subsection{Amalgamations}
We now describe a way to combine the examples of ${\Delta \mathfrak{G}}$ and $\Omega$ into a new augmentation category using the theory of categorical pushouts. Usually, taking a categorical pushout leads to an unwieldy result, however, we will show that it is of the form of an \emph{amalgamation}.

It is known that if you take the algebraic free products with amalgamations of groups, then the original groups embed into this new group \cite{MR3069472}.  An amalgamation in an arbitrary category is a pushout along two monic maps.  

\begin{definition}
A category is said to have the \emph{amalgamation property} if amalgamations exists for all diagrams of monic maps $B \hookleftarrow A \hookrightarrow C$.  
\end{definition}

It can be shown that such diagrams share the analogous embedding property of free products with amalgamations of groups.  Although the category of small categories, $\textbf{Cat}$, does not satisfy the amalgamation property for all pushouts, there are some sufficient conditions which ensure the amalgamation property, this will be our next topic of focus.

Recall that for any functors $F_X \colon \mathcal{W} \to \mathcal{X}$ and $F_Y \colon \mathcal{W} \to \mathcal{Y}$ we can form the pushout category $\mathcal{Z}$ which makes the following universal diagram of functors commutative:
$$\xymatrix{\mathcal{W}  \ar[r]^{F_Y}  \ar[d]_{F_X}& \mathcal{Y} \ar[d]^{G_Y} \\
\mathcal{X} \ar[r]_{G_X} & \mathcal{Z} \rlap{ .}}$$
For the rest of this section we will assume that $\mathcal{W}$ is a subcategory of both $\mathcal{X}$ and $\mathcal{Y}$ and that the functors $F_X$ and $F_Y$ are embeddings.

\begin{definition}
We will say that the pushout category $\mathcal{Z}$ is an \textit{amalgamation} if the functors $G_X$ and $G_Y$ are embeddings.
\end{definition}

Not all pushouts of this form are an amalgamation, some assumptions are needed on the embeddings $F_X$ and $F_Y$.  This was first considered by Trnkov\'{a} in \cite{amalgamations2}, but a more general condition was proved by MacDonald and Scull in \cite{amalgamations}.

\begin{definition}[{\cite[Definition 3.1]{amalgamations}}]
A class of morphisms $\mathcal{M}$ of $\mathcal{X}$ has the \textit{3-for-2 property} when if $f,g$ and $h=g \circ f$ are morphisms in $\mathcal{X}$, if any two of $f,g$ and $h$ are in $\mathcal{M}$, then the third is also in $\mathcal{M}$.
\end{definition}

\begin{definition}[{\cite[Definition 3.2]{amalgamations}}]
A functor $F_Y \colon \mathcal{W} \to \mathcal{Y}$ has the \textit{3-for-2 property} if the set of image morphisms $\mathcal{F} := \{ F_Y(b) \mid b \text{ a morphism in } \mathcal{W} \}$ satisfies the 3-for-2 property.
\end{definition}

\begin{theorem}[{\cite[Theorem 3.3]{amalgamations}}]
If the functors $F_X \colon \mathcal{W} \to \mathcal{X}$ and $F_Y \colon \mathcal{W} \to \mathcal{Y}$ are embeddings which both satisfy the 3-for-2 property, then the induced functors $G_X \colon \mathcal{X} \to \mathcal{Z}$ and $G_Y \colon \mathcal{Y} \to \mathcal{Z}$ are also embeddings.  Therefore $\mathcal{Z}$ is an amalgamation.
\end{theorem}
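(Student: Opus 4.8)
The plan is to compute the morphisms of the pushout category $\mathcal{Z}$ explicitly by means of reduced words, exactly as one does for a free product of groups with amalgamation, and then to read off faithfulness of $G_X$ and $G_Y$ from the uniqueness of reduced forms. On objects there is nothing to prove: $\mathrm{Ob}(\mathcal{Z})$ is the pushout in $\mathbf{Set}$ of the injections $\mathrm{Ob}(F_X)$ and $\mathrm{Ob}(F_Y)$ along $\mathrm{Ob}(\mathcal{W})$, and in $\mathbf{Set}$ a pushout of a pair of injections along a common injection has injective legs, so $G_X$ and $G_Y$ are injective on objects. It therefore remains to show that they are faithful.

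First I would build a candidate category $\mathcal{Z}'$. Its objects are $\mathrm{Ob}(\mathcal{X}) \sqcup_{\mathrm{Ob}(\mathcal{W})} \mathrm{Ob}(\mathcal{Y})$. Call a morphism of $\mathcal{X}$ or of $\mathcal{Y}$ \emph{pure} if it does not lie in the image of $\mathcal{W}$. A \emph{reduced word} is either an element of $\mathrm{mor}(\mathcal{X}) \sqcup_{\mathrm{mor}(\mathcal{W})} \mathrm{mor}(\mathcal{Y})$ (the ``short'' words), or a composable sequence $(h_1, \dots, h_k)$ with $k \geq 2$ of pure non-identity morphisms whose consecutive entries lie on opposite sides. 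Composition is ``concatenate, then reduce'', where a reduction step is one of: compose two adjacent entries lying on the same side; delete an identity; or, when a morphism of $\mathcal{W}$ occurs, absorb it into a neighbouring entry, using that it may be viewed either in $\mathcal{X}$ or in $\mathcal{Y}$. Every reduction step strictly decreases word length, so the process terminates; the content is that the reduced word one reaches does not depend on the choices made, i.e. the rewriting system is confluent.

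Confluence is where the $3$-for-$2$ hypothesis is used, and it is the crux of the argument. The essential local ambiguity is a subword $(\dots, f, b, g, \dots)$ with $f$ a pure morphism of $\mathcal{X}$, $b$ a morphism of $\mathcal{W}$, and $g$ a pure morphism of $\mathcal{Y}$ (together with the analogues at the ends of a word): one may absorb $b$ on the left, obtaining the $\mathcal{X}$-morphism $b \circ f$, or on the right, obtaining the $\mathcal{Y}$-morphism $g \circ b$. The key point is that each of these is again pure: if $b \circ f$ lay in the image $\mathcal{F}_X$ of $F_X$, then, since $\mathcal{F}_X$ has the $3$-for-$2$ property and $b, b \circ f \in \mathcal{F}_X$, we would get $f \in \mathcal{F}_X$, contradicting purity of $f$; symmetrically $g \circ b$ is pure by the $3$-for-$2$ property of $\mathcal{F}_Y$. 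Hence absorbing $b$ to either side cannot manufacture a fresh boundary-crossing $\mathcal{W}$-morphism, and a short diagram chase shows the two outcomes reduce to the same word. Feeding this local confluence, together with the obvious termination, into Newman's lemma gives uniqueness of reduced forms. It then follows formally that $\mathcal{Z}'$ is a category, that the evident functors $\mathcal{X} \to \mathcal{Z}'$ and $\mathcal{Y} \to \mathcal{Z}'$ restrict to the same functor on $\mathcal{W}$, and that this pair is universal with that property, so $\mathcal{Z}' \cong \mathcal{Z}$.

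Faithfulness is then immediate: a morphism $f$ of $\mathcal{X}$ is carried by $G_X$ to its reduced form, which is the length-one word $(f)$ when $f$ is pure and the class of $f$ in $\mathrm{mor}(\mathcal{X}) \sqcup_{\mathrm{mor}(\mathcal{W})} \mathrm{mor}(\mathcal{Y})$ when $f$ lies in the image of $\mathcal{W}$; in both cases the assignment is injective on each hom-set, because $\mathrm{mor}(\mathcal{X}) \to \mathrm{mor}(\mathcal{X}) \sqcup_{\mathrm{mor}(\mathcal{W})} \mathrm{mor}(\mathcal{Y})$ is injective (again a pushout of injections in $\mathbf{Set}$). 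The same reasoning applies to $G_Y$, so both $G_X$ and $G_Y$ are embeddings and $\mathcal{Z}$ is an amalgamation. The anticipated obstacle is entirely the confluence step: without the $3$-for-$2$ condition a same-side composite of pure morphisms could land in $\mathcal{W}$, opening up new cross-boundary reductions and destroying uniqueness of normal forms — precisely the phenomenon the hypothesis is designed to rule out.
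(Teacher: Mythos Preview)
The paper does not supply its own proof of this theorem; it is quoted from MacDonald--Scull \cite{amalgamations} and used as a black box. So there is no in-paper argument to compare against, and I comment on your sketch on its own merits.

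Your overall strategy---model the pushout by reduced words and read off faithfulness from a normal-form theorem---is the standard one, and your identification of where the 3-for-2 hypothesis enters is exactly right: it guarantees that absorbing a $\mathcal{W}$-morphism into a pure neighbour keeps that neighbour pure. However, the confluence claim as you state it is false. Take the critical overlap $(\dots, f, b, g, \dots)$ with $f$ pure in $\mathcal{X}$, $b \in \mathcal{W}$, $g$ pure in $\mathcal{Y}$. Absorbing $b$ to the left yields $(\dots, b \circ f, g, \dots)$; absorbing to the right yields $(\dots, f, g \circ b, \dots)$. You have correctly argued that $b \circ f$ and $g \circ b$ are again pure (and non-identities, by the same 3-for-2 argument applied to $\mathrm{id} \in \mathcal{F}$), so \emph{both} resulting words are already fully reduced in your sense---yet they are visibly distinct sequences whenever $b$ is not an identity. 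The rewriting system therefore does not have unique normal forms, and the ``short diagram chase'' you promise cannot exist.

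The repair is not to seek a unique reduced word but to take morphisms of $\mathcal{Z}'$ to be equivalence classes of reduced words under the relation generated precisely by these shuffle moves $(\dots, b \circ f, g, \dots) \sim (\dots, f, g \circ b, \dots)$. The 3-for-2 property is then used to show that this equivalence preserves the length of a reduced word (a shuffle cannot create a new $\mathcal{W}$-entry, hence cannot trigger a further contraction), and that on length-one words it collapses to the identification $\mathrm{mor}(\mathcal{X}) \sqcup_{\mathrm{mor}(\mathcal{W})} \mathrm{mor}(\mathcal{Y})$. Faithfulness of $G_X$ and $G_Y$ then follows exactly as you say. This is the route taken in \cite{amalgamations}, and it is the direct categorical analogue of the normal-form theorem for amalgamated free products of groups, where reduced words are likewise unique only up to sliding amalgam elements across a syllable boundary.
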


Note that any full functor is automatically 3-for-2.  The following lemma gives the structure of the category $\mathcal{Z}$.

\begin{lemma}[{\cite[Lemma 10.2]{Fiore08modelstructures}}]\label{paolistruct}
Let $\mathcal{Z}$ be an amalgamation of $\mathcal{X} \leftarrow \mathcal{W} \rightarrow \mathcal{Y}$ such that the map $\mathcal{W} \to \mathcal{X}$ is full.  Then:
\begin{itemize}
\item $\text{Ob}(\mathcal{Z}) = \text{Ob}(\mathcal{Y}) \coprod (\text{Ob}(\mathcal{X}) \backslash \text{Ob}(\mathcal{W})).$
\item The morphisms of $\mathcal{Z}$ have two forms:
	\begin{enumerate}
	\item A morphism $\xymatrix{X_0 \ar[r]^f & X_1}$ with $f \in \text{Mor}(\mathcal{X}) \backslash \text{Mor}(\mathcal{W})$.
	\item A path $\xymatrix{X_0 \ar[r]^{f_1} & Y_1 \ar[r]^d & Y_2 \ar[r]^{f_2} & X_2}$ where $d$ is a morphism in $\mathcal{Y}$, and $f_1,f_2 \in \text{Mor}(\mathcal{X}) \backslash \text{Mor}(\mathcal{W}) \cup \{ \text{identities on } \text{Ob}(\mathcal{Z})\}$.  If $f_1$ is non-trivial then $Y_1 \in \mathcal{W}$.  If $f_2$ is non-trivial, then $Y_2 \in \mathcal{W}$.
	\end{enumerate}
\end{itemize}
\end{lemma}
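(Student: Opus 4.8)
\medskip

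The plan is to work with an explicit model of the pushout $\mathcal{Z}$ in $\textbf{Cat}$ and then to normalise a generic morphism by a rewriting argument, the key input being fullness of $\mathcal{W}\hookrightarrow\mathcal{X}$. Recall one builds $\mathcal{Z}$ as follows: its set of objects is the pushout $\text{Ob}(\mathcal{X})\sqcup_{\text{Ob}(\mathcal{W})}\text{Ob}(\mathcal{Y})$ computed in $\textbf{Set}$, which, as $F_X$ is injective on objects, is $\text{Ob}(\mathcal{Y})\sqcup(\text{Ob}(\mathcal{X})\setminus\text{Ob}(\mathcal{W}))$; in particular $\text{Ob}(\mathcal{X})$ and $\text{Ob}(\mathcal{Y})$ meet inside $\text{Ob}(\mathcal{Z})$ exactly in $\text{Ob}(\mathcal{W})$, which gives the first bullet. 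A morphism of $\mathcal{Z}$ is an equivalence class of finite composable strings $g_k\circ\cdots\circ g_1$, each $g_i$ a morphism of $\mathcal{X}$ or of $\mathcal{Y}$, under the smallest congruence generated by: (i) replace two adjacent arrows from the same factor by their composite there; (ii) delete identities; (iii) identify $F_X(w)$ with $F_Y(w)$ for every morphism $w$ of $\mathcal{W}$.

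First I would reduce an arbitrary string to normal form. Using (i), merge adjacent arrows lying in the same factor, so the string becomes strictly alternating between $\mathcal{X}$- and $\mathcal{Y}$-arrows. Now take any $\mathcal{X}$-arrow $g_i$ that is not at either end of this alternating string: then $g_i$ is flanked by two $\mathcal{Y}$-arrows, so its source and target both lie in $\text{Ob}(\mathcal{Y})$, hence in $\text{Ob}(\mathcal{W})$; by fullness of $F_X$ we have $g_i=F_X(w)$ for some $w$ of $\mathcal{W}$, so by (iii) it is a $\mathcal{Y}$-arrow and by (i) can be absorbed into its two neighbours, shortening the string. Since the total number of arrows strictly drops at each such step (and at each application of (i) or (ii)), the process terminates; the irreducible strings are the sought normal forms. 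By alternation, once all interior $\mathcal{X}$-arrows have been removed an irreducible string can only be empty, a single $\mathcal{Y}$-arrow, a single $\mathcal{X}$-arrow, or of shape $X_0\xrightarrow{f_1}Y_1\xrightarrow{d}Y_2\xrightarrow{f_2}X_2$ with $d$ in $\mathcal{Y}$ and $f_1,f_2$ each either an identity or an $\mathcal{X}$-arrow.

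It remains to match these with the two advertised forms. An irreducible single $\mathcal{X}$-arrow cannot be in the image of $\mathcal{W}$ — otherwise (iii) then (i) would reduce it — so it is form (1). Everything else is an instance of form (2) after inserting identities for a missing $f_i$ or $d$: if, say, $f_1$ is not an identity then, being irreducible, it is an $\mathcal{X}$-arrow not in the image of $\mathcal{W}$, and its target $Y_1$ is the source of the $\mathcal{Y}$-arrow $d$, so $Y_1\in\text{Ob}(\mathcal{X})\cap\text{Ob}(\mathcal{Y})=\text{Ob}(\mathcal{W})$; symmetrically for $f_2$. This yields the stated description. Finally, that the normal form is unique — so the description is faithful, not merely that every morphism can be so written — is exactly the amalgamation hypothesis that $G_X,G_Y$ are embeddings (which, by the Theorem above, holds here because the full functor $F_X$ is $3$-for-$2$): it guarantees that distinct morphisms of $\mathcal{X}$ or of $\mathcal{Y}$ stay distinct in $\mathcal{Z}$ and that no two distinct normal-form strings above become equal. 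The main obstacle I anticipate is not conceptual but the bookkeeping: organising the rewriting so that it visibly terminates and confluently lands in exactly one of the two forms, and dispatching all the degenerate cases (trivial $f_i$, coincident endpoints, strings of identities) — whereas the one real idea is that fullness of $\mathcal{W}\hookrightarrow\mathcal{X}$ lets every \emph{interior} $\mathcal{X}$-morphism be slid into $\mathcal{Y}$.
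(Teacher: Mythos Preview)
The paper does not give its own proof of this lemma: it is quoted verbatim from \cite[Lemma 10.2]{Fiore08modelstructures} and used as a black box. So there is nothing in the paper to compare your argument against.

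Your rewriting argument is correct for what the lemma actually asserts, namely that every morphism of $\mathcal{Z}$ \emph{can be written} in one of the two listed forms. The single substantive idea --- that fullness of $\mathcal{W}\hookrightarrow\mathcal{X}$ forces any $\mathcal{X}$-arrow flanked on both sides by $\mathcal{Y}$-arrows to lie in $\mathcal{W}$, hence to be absorbable into $\mathcal{Y}$ --- is exactly right, and your termination argument (length strictly drops) is fine. The enumeration of irreducible shapes (identity, single $\mathcal{Y}$, single $\mathcal{X}$, and the three shapes $f_1 d$, $d f_2$, $f_1 d f_2$) is complete and they do match forms (1) and (2) after padding with identities.

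One caution on your final paragraph: you invoke the amalgamation hypothesis to get \emph{uniqueness} of the normal form, but the embedding of $G_X,G_Y$ alone does not obviously give that two distinct length-$3$ strings $f_1 d f_2$ and $f_1' d' f_2'$ represent distinct morphisms of $\mathcal{Z}$ --- that would need a confluence argument for your rewriting system, not just faithfulness on the factors. Fortunately the lemma as stated does not claim uniqueness, only the existence of such a presentation, so this does not affect your proof of the statement; just do not over-claim.
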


We shall now discuss when an augmentation category has the amalgamation property.  We will then prove that for an amalgable augmentation category $\mathbb{A}$, the pushout ${\Delta \mathfrak{G}} \sqcup_\Delta \mathbb{A}$ is an augmentation category.  First, recall that any morphism in $\Delta$ can be decomposed in the form \textbf{SD} where \textbf{S} is a composition of degeneracy maps, and \textbf{D} is a composition of face maps.  Furthermore, any morphism in a crossed simplicial group ${\Delta \mathfrak{G}}$, by definition, can be written in the form \textbf{TSD} for \textbf{SD} as above and \textbf{T} a composition of morphisms of some $\mathfrak{G}_n$. We can then use the properties of crossed simplicial groups to rearrange this in to the form $\textbf{S}\mathbf{'}\textbf{T}\mathbf{'}\textbf{D}\mathbf{'}$. 

\begin{definition}
Let $\mathcal{S}$ be a fully faithful subcategory of $\mathcal{C}$. We say that $\mathcal{S}$ is a \emph{sieve} in $\mathcal{C}$ if for every morphism $f \colon c \to s$ in $\mathcal{C}$ with $s \in \mathcal{S}$, $c$ and $f$ are also in $\mathcal{S}$.
\end{definition}

\begin{definition}
An augmentation category $\mathbb{A}$ is \emph{amalgable} if $\Delta$ is a sieve in $\mathbb{A}$.
\end{definition}

\begin{proposition}
Let $\mathbb{A}$ be an amalgable augmentation category and ${\Delta \mathfrak{G}}$ a crossed simplicial group. Then the category $\mathbb{A} \mathfrak{G} := \Delta \mathfrak{G} \sqcup_\Delta \mathbb{A}$ is an amalgamation.
\end{proposition}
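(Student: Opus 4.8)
The plan is to invoke the general amalgamation theorem (Theorem 3.3 of~\cite{amalgamations}) together with the structural description of the pushout in Lemma~\ref{paolistruct}. To apply these, I must verify two things about the span $\Delta\mathfrak{G}\hookleftarrow\Delta\hookrightarrow\mathbb{A}$: first that both legs are embeddings, and second that both satisfy the 3-for-2 property. The embedding $\Delta\hookrightarrow\Delta\mathfrak{G}$ is part of the definition of a crossed simplicial group, and it is full (every morphism $i[m]\to i[n]$ factors as $i(\phi)\circ g$, but the simplicial morphisms are precisely those with $g$ trivial, so $\Delta$ is a full subcategory); hence it is automatically 3-for-2 by the remark that any full functor is 3-for-2. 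The embedding $i\colon\Delta\hookrightarrow\mathbb{A}$ is faithful by (AC2), so the first requirement is met.

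The crux is therefore showing that $i\colon\Delta\hookrightarrow\mathbb{A}$ has the 3-for-2 property, i.e.\ that the class $\mathcal{F}$ of morphisms of $\mathbb{A}$ lying in the image of $\Delta$ is closed under 3-for-2. Given a composable pair $f,g$ in $\mathbb{A}$ with composite $h=g\circ f$, I need: if two of $f,g,h$ are simplicial then so is the third. Two of the three cases are easy: if $f$ and $g$ are simplicial then $h$ is simplicial since $\Delta$ is a subcategory; and if $f$ and $h$ are simplicial, then the fact that $\Delta$ is \emph{amalgable} — meaning $\Delta$ is a sieve in $\mathbb{A}$ — forces $g$ into $\Delta$: indeed $g$ has codomain in $\Delta$ (the codomain of $h$), and $g$ factors the morphism $h$ with source in $\Delta$... but more directly, the sieve condition says that for any morphism $c\to s$ with $s\in\Delta$, the morphism is in $\Delta$; applied to $g$ this requires its \emph{source} to be in $\Delta$, which it is (it equals the codomain of $f$). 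The remaining case — $g$ and $h$ simplicial, deduce $f$ simplicial — is where I expect the only real subtlety: here one uses the factorisation structure. Write $f$ in its EZ-factorisation in $\mathbb{A}$; since $\mathbb{A}$ is a crossed group over its planar/strict part one can write $f$ in the form $\mathbf{TSD}$ (special automorphism, then degeneracies, then faces, after rearranging as in the discussion preceding the statement), and composing with the simplicial $g$ to land in $\Delta$ forces, by uniqueness of such factorisations and the sieve property applied to each component, that the automorphism part of $f$ is trivial and its face/degeneracy parts are the simplicial ones; hence $f$ is simplicial. This is the step I would write out most carefully.

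Once both legs are verified to be embeddings with the 3-for-2 property, Theorem 3.3 of~\cite{amalgamations} gives immediately that the induced functors $\Delta\mathfrak{G}\to\mathbb{A}\mathfrak{G}$ and $\mathbb{A}\to\mathbb{A}\mathfrak{G}$ are embeddings, which is precisely the assertion that $\mathbb{A}\mathfrak{G}=\Delta\mathfrak{G}\sqcup_\Delta\mathbb{A}$ is an amalgamation. I would close by noting that because the leg $\Delta\hookrightarrow\Delta\mathfrak{G}$ is full, Lemma~\ref{paolistruct} further describes the objects and morphisms of $\mathbb{A}\mathfrak{G}$ explicitly — objects are $\mathrm{Ob}(\mathbb{A})\amalg(\mathrm{Ob}(\Delta\mathfrak{G})\setminus\mathrm{Ob}(\Delta))$, which since $i$ and the crossed-group inclusion are both bijective on objects collapses to just $\mathrm{Ob}(\Delta\mathfrak{G})=\mathrm{Ob}(\mathbb{A})$ — which will be useful for the subsequent verification that $\mathbb{A}\mathfrak{G}$ is itself an augmentation category.
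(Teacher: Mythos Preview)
You have the two legs of the span reversed, and this swap is a genuine error rather than a cosmetic one.

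The inclusion $\Delta\hookrightarrow\Delta\mathfrak{G}$ is \emph{not} full: by definition of a crossed simplicial group, a morphism $i[m]\to i[n]$ factors as $i(\phi)\circ g$ with $g\in\mathrm{Aut}_{\Delta\mathfrak{G}}([m])$, so whenever $\mathfrak{G}_m$ is non-trivial there are strictly more morphisms $i[m]\to i[n]$ than those in the image of $\Delta$. Your sentence ``the simplicial morphisms are precisely those with $g$ trivial, so $\Delta$ is a full subcategory'' says exactly the opposite of what it should: it identifies a \emph{proper} subset of the hom-set, hence witnesses non-fullness. Consequently the 3-for-2 property for $\Delta\hookrightarrow\Delta\mathfrak{G}$ cannot be dismissed as automatic; this is precisely the leg the paper checks by hand, using the unique $\mathbf{TSD}$ factorisation in $\Delta\mathfrak{G}$ to argue that if two of $f,g,g\circ f$ have trivial $\mathbf{T}$-part then so does the third.

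Conversely, the leg $\Delta\hookrightarrow\mathbb{A}$ \emph{is} full: the definition of ``amalgable'' requires $\Delta$ to be a sieve in $\mathbb{A}$, and the paper's definition of sieve includes fully faithful. So the 3-for-2 property for this leg is immediate and your ``crux'' paragraph is aimed at the wrong target. The argument you sketch there (writing $f$ as $\mathbf{TSD}$ because ``$\mathbb{A}$ is a crossed group over its planar/strict part'') is not available for an arbitrary augmentation category $\mathbb{A}$ anyway --- that factorisation is a feature of $\Delta\mathfrak{G}$, not of $\mathbb{A}$. Finally, in your closing remark the inclusion $i\colon\Delta\hookrightarrow\mathbb{A}$ is not bijective on objects (think $\mathbb{A}=\Omega$), and Lemma~\ref{paolistruct} should be applied with the full leg being $\Delta\hookrightarrow\mathbb{A}$, giving $\mathrm{Ob}(\Delta\mathfrak{G})\amalg(\mathrm{Ob}(\mathbb{A})\setminus\mathrm{Ob}(\Delta))=\mathrm{Ob}(\mathbb{A})$.
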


\begin{proof}
By assumption, the inclusion $\Delta \to \mathbb{A}$ is a sieve, and consequently full. Therefore we need only check that the map $j \colon \Delta \to {\Delta \mathfrak{G}}$ is 3-for-2.  As this map is \emph{not} full, we must check it explicitly.

We are interested in the image set $\mathcal{F} := \{j(b) \mid b \text{ is a morphism in } \Delta\}$.  Recall that $\mathcal{F}$ has the 3-for-2 property if when if two of $f,g,h=g \circ f$ are in $\mathcal{F}$ then the third is also in $\mathcal{F}$.  Assume that $f,g \in \mathcal{F}$ then $f=\textbf{S}_f\textbf{D}_f$, $f=\textbf{S}_g\textbf{D}_g$ and $h = g \circ f = \textbf{S}_f\textbf{D}_f \textbf{S}_g\textbf{D}_g \cong \textbf{S}_h \textbf{D}_h$ for some composition of face and degeneracy maps (we can always do the last step due to the relations between face and degeneracy maps in $\Delta$).  Therefore $h \in \mathcal{F}$.  Now without loss of generality assume that $f,h \in \mathcal{F}$, we must show that $g \in \mathcal{F}$.  This follows again from the unique decomposition property, if $g \neq \mathcal{F}$ then $g = \textbf{S}_g\textbf{T}_g\textbf{D}_g$ for some non-trivial composition $\textbf{T}_g$.  Therefore $g \circ f = \textbf{S}_f\textbf{D}_f \textbf{S}_g \textbf{T}_g \textbf{D}_g \cong \textbf{S}_h \textbf{T}_h \textbf{D}_h$ for some composition of face, automorphism and degeneracy maps.  Therefore $g \circ f = h \not\in \mathcal{F}$ which is a contradiction.
\end{proof}

Using Lemma \ref{paolistruct}, and the fact that $\Delta$ is a sieve in $\mathbb{A}$, we can explicitly describe the structure of $\mathbb{A} \mathfrak{G}$.

\begin{lemma}\label{morstruct}
The category $\mathbb{A} \mathfrak{G}$ has the following structure:
\begin{itemize}
\item $\text{Ob}(\mathbb{A} \mathfrak{G}) = \text{Ob}(\mathbb{A})$.
\item The morphisms of $\mathbb{A} \mathfrak{G}$ have the forms:
	\begin{enumerate}
	\item  A morphism $\xymatrix{S \ar[r]^f & T}$ with $f \in \text{Mor}(\mathbb{A}) \backslash \text{Mor}(\Delta)$.
	\item A path $\xymatrix{[m] \ar[r]^d & [n] \ar[r]^{f_2} & T}$ with 
		\begin{itemize} \item $d \in \text{Mor}(\Delta \mathfrak{G})$.
		\item	$f_2 \in \text{Mor}(\mathbb{A}) \backslash \text{Mor}(\Delta) \cup \{ \text{identities on } \text{Ob}(\mathbb{A} \mathfrak{G})\}$.
	\end{itemize}
	\end{enumerate}
\end{itemize}
\end{lemma}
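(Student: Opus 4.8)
The plan is to apply Lemma~\ref{paolistruct} directly to the pushout square defining $\mathbb{A}\mathfrak{G} = \Delta\mathfrak{G} \sqcup_\Delta \mathbb{A}$, with the dictionary $\mathcal{W} = \Delta$, $\mathcal{X} = \mathbb{A}$, $\mathcal{Y} = \Delta\mathfrak{G}$, $\mathcal{Z} = \mathbb{A}\mathfrak{G}$. The hypothesis of that lemma is that the map $\mathcal{W}\to\mathcal{X}$, i.e. $\Delta \hookrightarrow \mathbb{A}$, is full; this holds because $\mathbb{A}$ is amalgable, so $\Delta$ is a sieve in $\mathbb{A}$, hence in particular a full subcategory. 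So Lemma~\ref{paolistruct} applies verbatim and gives the object set and the two shapes of morphisms.

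First I would read off the object set: Lemma~\ref{paolistruct} gives $\text{Ob}(\mathbb{A}\mathfrak{G}) = \text{Ob}(\Delta\mathfrak{G}) \sqcup (\text{Ob}(\mathbb{A})\setminus\text{Ob}(\Delta))$. Since the inclusion $\Delta\hookrightarrow\Delta\mathfrak{G}$ is a bijection on objects (it is an embedding of categories that is the identity on objects, by the definition of a crossed simplicial group recalled just before Lemma~\ref{crossisez}), we have $\text{Ob}(\Delta\mathfrak{G}) = \text{Ob}(\Delta)$, and therefore $\text{Ob}(\mathbb{A}\mathfrak{G}) = \text{Ob}(\Delta) \sqcup (\text{Ob}(\mathbb{A})\setminus\text{Ob}(\Delta)) = \text{Ob}(\mathbb{A})$, as claimed.

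Next I would transcribe the morphism description. Lemma~\ref{paolistruct} gives two forms: (1) a single morphism $f \in \text{Mor}(\mathcal{X})\setminus\text{Mor}(\mathcal{W}) = \text{Mor}(\mathbb{A})\setminus\text{Mor}(\Delta)$, which is form (1) of the statement; and (2) a path $X_0 \xrightarrow{f_1} Y_1 \xrightarrow{d} Y_2 \xrightarrow{f_2} X_2$ with $d$ in $\mathcal{Y} = \Delta\mathfrak{G}$ and $f_1, f_2 \in (\text{Mor}(\mathbb{A})\setminus\text{Mor}(\Delta)) \cup \{\text{identities}\}$, subject to: if $f_1$ is nontrivial then $Y_1 \in \mathcal{W} = \Delta$, and if $f_2$ nontrivial then $Y_2 \in \Delta$. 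Here the point I would emphasize, and which cuts down form (2) to the single $f_1$-trivial shape listed in the target statement, is the sieve hypothesis: if $f_1 \colon X_0 \to Y_1$ is a nontrivial morphism of $\mathbb{A}$ with codomain $Y_1 = [n] \in \Delta$, then because $\Delta$ is a sieve in $\mathbb{A}$ the morphism $f_1$ and its domain $X_0$ lie in $\Delta$ — contradicting $f_1 \in \text{Mor}(\mathbb{A})\setminus\text{Mor}(\Delta)$. Hence $f_1$ is always an identity, and form (2) reduces to a path $[m] \xrightarrow{d} [n] \xrightarrow{f_2} T$ with $d \in \text{Mor}(\Delta\mathfrak{G})$ and $f_2 \in (\text{Mor}(\mathbb{A})\setminus\text{Mor}(\Delta)) \cup \{\text{identities}\}$, exactly as stated.

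**The main obstacle.** The only real subtlety — and the step I expect to need the most care — is checking that the reduction argument above does not accidentally discard composites that genuinely need two $\mathbb{A}$-legs. In Lemma~\ref{paolistruct}'s general statement the path has the form $f_1$–$d$–$f_2$ precisely because a morphism of $\mathcal{X}$ landing in $\mathcal{W}$ could be followed by a $\mathcal{Y}$-morphism and then by another $\mathcal{X}$-morphism leaving $\mathcal{W}$; the sieve condition kills the incoming leg $f_1$ but not the outgoing leg $f_2$ (a morphism of $\mathbb{A}$ can certainly start in $\Delta$ and leave it). So one must confirm that every morphism of $\mathbb{A}\mathfrak{G}$ is a composite along a path whose reduced form is one of the two listed — i.e. that consecutive $\mathbb{A}$-legs can always be composed inside $\mathbb{A}$ (true, $\mathbb{A}$ is a category) and consecutive $\Delta\mathfrak{G}$-legs inside $\Delta\mathfrak{G}$, and that any $\Delta$-morphism appearing as an isolated leg can be absorbed into an adjacent $\Delta\mathfrak{G}$-leg (true, since $\Delta \hookrightarrow \Delta\mathfrak{G}$). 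These are exactly the normal-form manipulations underlying Lemma~\ref{paolistruct}, so no new work is required beyond invoking it; I would simply remark that the sieve hypothesis is what forces the $f_1$ leg to be trivial and present the two resulting forms.
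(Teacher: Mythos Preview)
Your proposal is correct and follows essentially the same route as the paper: apply Lemma~\ref{paolistruct} with $\mathcal{W}=\Delta$, $\mathcal{X}=\mathbb{A}$, $\mathcal{Y}=\Delta\mathfrak{G}$, use $\text{Ob}(\Delta\mathfrak{G})=\text{Ob}(\Delta)$ (wideness) to identify the object set with $\text{Ob}(\mathbb{A})$, and invoke the sieve condition to kill the incoming leg $f_1$. The paper's proof is terser but identical in substance; your extra paragraph on ``the main obstacle'' is a reasonable sanity check but not needed, since those normal-form reductions are already packaged in Lemma~\ref{paolistruct}.
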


\begin{proof}\leavevmode
\begin{itemize}
\item We had for a general pushout that $\text{Ob}(\mathcal{\mathbb{A} \mathfrak{G}}) = \text{Ob}(\Delta \mathfrak{G}) \coprod (\text{Ob}(\mathbb{A}) \backslash \text{Ob}(\Delta)).$  Using the fact $\text{Ob}(\Delta) = \text{Ob}(\Delta \mathfrak{G})$, (as $\Delta$ is wide in $\Delta \mathfrak{G}$), we get the result for the objects of $\mathbb{A} \mathfrak{G}$.  
\item The morphisms come directly from Lemma \ref{paolistruct} with the only difference being we do not have a map $\xymatrix{S \ar[r]^{f_1} & [m]}$ in the path.  This is due to the fact that $\Delta$ is a sieve in $\mathbb{A}$ so no such map exists.
\end{itemize}
\end{proof}

\begin{definition}\label{def:amalg}
Let $\mathcal{C}$ be a category and $\mathcal{D}$ a subcategory.  We will say that $\mathcal{C}$ is \emph{$\mathcal{D}$-strict} if for all objects $d \in \mathcal{D}$ we have $\text{Aut}_\mathcal{C}(d)$ trivial.
\end{definition}

\begin{corollary}\label{amalgcross}
Let $\mathbb{A}$ be an amalgable augmentation category, then the pushout $\mathbb{A} \mathfrak{G} := \Delta \mathfrak{G} \sqcup_\Delta \mathbb{A}$ is a crossed $\mathbb{A}$-group.
\end{corollary}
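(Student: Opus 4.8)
The strategy is to verify the conditions of Definition~\ref{csgrpdeff} directly, with $\mathbb{R}=\mathbb{A}$ and $\mathbb{S}=\mathbb{A}\mathfrak{G}$. By the amalgamation result just proved, the canonical functor $\mathbb{A}\to\mathbb{A}\mathfrak{G}$ is an embedding, and by Lemma~\ref{morstruct} it is the identity on objects, so $\mathbb{A}$ sits inside $\mathbb{A}\mathfrak{G}$ as a wide (though not full) subcategory. Throughout I identify $\mathbb{A}$, $\Delta\mathfrak{G}$ and $\Delta$ with their images in $\mathbb{A}\mathfrak{G}$; note in particular that a morphism lies in both $\text{Mor}(\mathbb{A})$ and $\text{Mor}(\Delta\mathfrak{G})$ if and only if it lies in $\text{Mor}(\Delta)$, a fact one extracts from Lemma~\ref{paolistruct}. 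It then remains to choose the groups of special automorphisms and to check the unique factorisation property.

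For the special automorphisms the choice is forced by Lemma~\ref{morstruct}: for a linear tree $[n]$ put $\mathfrak{G}_{[n]}:=\mathfrak{G}_n=\text{Aut}_{\Delta\mathfrak{G}}([n])$, and for every object $a$ of $\mathbb{A}$ that is not a linear tree put $\mathfrak{G}_a:=\{\text{id}_a\}$. In the second case $\mathfrak{G}_a\subseteq\text{Aut}_{\mathbb{A}\mathfrak{G}}(a)$ trivially. In the first case one first observes, using Lemma~\ref{morstruct} together with the hypothesis that $\Delta$ is a sieve in $\mathbb{A}$ (so that neither a morphism of type~(1) nor a path of type~(2) with non-trivial final leg can have target in $\Delta$), that every endomorphism of $[n]$ in $\mathbb{A}\mathfrak{G}$ already lies in $\Delta\mathfrak{G}$; since $\text{Aut}_\Delta([n])$ is trivial, an invertible such endomorphism lies in $\mathfrak{G}_n$, so in fact $\text{Aut}_{\mathbb{A}\mathfrak{G}}([n])=\mathfrak{G}_n$.

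Next I would establish existence of the factorisation. Let $\phi\colon a\to b$ be a morphism of $\mathbb{A}\mathfrak{G}$. If $a$ is not a linear tree then, by Lemma~\ref{morstruct}, $\phi$ cannot be a path of type~(2) (those emanate from $\Delta$), so $\phi$ is of type~(1), i.e.\ $\phi\in\text{Mor}(\mathbb{A})$, and we take $\phi=\phi\circ\text{id}_a$. If $a=[n]$, write $\phi$ in the form of Lemma~\ref{morstruct} as $\phi=f_2\circ d$ with $d\colon[n]\to[m]$ in $\Delta\mathfrak{G}$ and $f_2\in\text{Mor}(\mathbb{A})$ (the case $d=\text{id}$ recovering type~(1)); applying the unique factorisation of the crossed simplicial group to $d$ yields $d=i(\psi)\circ g$ with $\psi\colon[n]\to[m]$ in $\Delta$ and $g\in\mathfrak{G}_n$, whence $\phi=(f_2\circ i(\psi))\circ g$ with $f_2\circ i(\psi)\in\text{Mor}(\mathbb{A})$ because $\Delta\subseteq\mathbb{A}$. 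This is a factorisation of the required shape.

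The remaining, and most delicate, point is uniqueness, where I expect the main obstacle to lie. Only the case $a=[n]$ is non-trivial. Suppose $\phi=f\circ g=f'\circ g'$ with $g,g'\in\mathfrak{G}_{[n]}$ and $f,f'\in\text{Mor}(\mathbb{A})$; then $f=f'\circ h$ with $h:=g'g^{-1}\in\mathfrak{G}_{[n]}$, and it suffices to prove $h=\text{id}_{[n]}$, for then $g=g'$ and $f=f'$. If $h\neq\text{id}_{[n]}$ then $h\notin\text{Mor}(\Delta)$, since $\mathfrak{G}_n\cap\text{Mor}(\Delta)=\text{Aut}_\Delta([n])=\{\text{id}\}$. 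Regarded as a zigzag in the pushout, $f'\circ h$ is the path $[n]\xrightarrow{h}[n]\xrightarrow{f'}b$. If $b\notin\Delta$ then the sieve condition gives $f'\in\text{Mor}(\mathbb{A})\setminus\text{Mor}(\Delta)$, so this is a reduced path of type~(2) with both legs outside $\Delta$, and by the uniqueness of normal forms in Lemma~\ref{morstruct} (equivalently Lemma~\ref{paolistruct}) it cannot represent the type~(1) morphism $f\in\text{Mor}(\mathbb{A})$ --- a contradiction. If instead $b\in\Delta$ then the sieve condition gives $f,f'\in\text{Mor}(\Delta)$, so the identity $f=f'\circ h$ holds already in $\Delta\mathfrak{G}$ (using $\text{Mor}(\mathbb{A})\cap\text{Mor}(\Delta\mathfrak{G})=\text{Mor}(\Delta)$ and that $\mathbb{A}\hookrightarrow\mathbb{A}\mathfrak{G}$ is faithful), and the uniqueness of the crossed simplicial group factorisation --- $i(\chi)=i(\psi')\circ h$ forces $h=\text{id}$ --- again yields a contradiction. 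Hence $h=\text{id}_{[n]}$, the factorisation is unique, and therefore $\mathbb{A}\mathfrak{G}$ is a crossed $\mathbb{A}$-group. The only ingredients needing care in a complete write-up are the uniqueness of the pushout normal form and the identification $\text{Mor}(\mathbb{A})\cap\text{Mor}(\Delta\mathfrak{G})=\text{Mor}(\Delta)$, both of which are supplied by Lemma~\ref{paolistruct}.
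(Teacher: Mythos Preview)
Your proof is correct and follows the same route as the paper: both use Lemma~\ref{morstruct} to read off that $\mathbb{A}$ is wide in $\mathbb{A}\mathfrak{G}$ and to decompose type~(2) paths as a special automorphism in $\mathfrak{G}_n$ followed by a morphism of $\mathbb{A}$. The paper's argument is essentially a two-line sketch --- it observes that fullness of $\Delta\hookrightarrow\mathbb{A}$ makes $\mathbb{A}$ ``$\Delta$-strict'' (Definition~\ref{def:amalg}), hence the $\Delta\mathfrak{G}$-part of a type~(2) path splits off cleanly --- and does not treat uniqueness at all; your version is considerably more careful on that point, and your flagging of the normal-form uniqueness in Lemma~\ref{paolistruct} as the real input is appropriate.
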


\begin{proof}
Using Lemma \ref{morstruct}, we have that $\mathbb{A}$ is wide in $\mathbb{A} \mathfrak{G}$.  Next, note that because the inclusion is full, $\mathbb{A}$ is $\Delta$-strict.  Therefore we have that the morphisms $\xymatrix{[m] \ar[r]^d & [n] \ar[r]^{f_2} & T}$ can be decomposed as an automorphism of $[m]$ (namely $\text{Aut}_{\Delta \mathfrak{G}}([m])$) followed by a map in $\mathbb{A}$.
\end{proof}

Finally, we show our desired result, which shows the compatibility between amalgamations and augmentation structures.

\begin{theorem}
Let $\mathbb{A}$ be an amalgable augmentation category, then the pushout $\mathbb{A} \mathfrak{G}$ is an augmentation category.
\end{theorem}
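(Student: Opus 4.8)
The plan is to verify axioms (AC1)--(AC3) of Definition~\ref{augdef} for $\mathbb{A}\mathfrak{G}$, leaning on the facts already assembled: $\mathbb{A}$ is an amalgable augmentation category, so $\Delta$ is a sieve (hence full) in $\mathbb{A}$; by Corollary~\ref{amalgcross} the pushout $\mathbb{A}\mathfrak{G}$ is a crossed $\mathbb{A}$-group; and by Lemma~\ref{morstruct} we have an explicit description of its objects and morphisms. For (AC1), I would first observe that $\mathbb{A}$ is a strict EZ-category when restricted away from $\Delta$-automorphisms, but more to the point, since $\mathbb{A}\mathfrak{G}$ is a crossed $\mathbb{A}$-group and $\mathbb{A}$ is an EZ-category, Lemma~\ref{crossisez} immediately gives that $\mathbb{A}\mathfrak{G}$ is an EZ-category. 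The degree function is inherited from $\mathbb{A}$ (every object of $\mathbb{A}\mathfrak{G}$ is an object of $\mathbb{A}$), and one checks using Lemma~\ref{morstruct} that the face/degeneracy decomposition of a morphism $[m]\xrightarrow{d}[n]\xrightarrow{f_2}T$ is: apply the unique special-automorphism-times-$\Delta$-morphism decomposition of $d$ in $\Delta\mathfrak{G}$, then the EZ-factorisation of the composite in $\mathbb{A}$; uniqueness follows from the uniqueness in each of $\Delta\mathfrak{G}$ and $\mathbb{A}$ together with the sieve condition. For the symmetric promagmoidal structure, $\widehat{\mathbb{A}\mathfrak{G}}$ carries the cartesian product; alternatively, since the forgetful functor $i^\ast\colon\widehat{\mathbb{A}\mathfrak{G}}\to\widehat{\mathbb{A}}$ has a left adjoint $i_!$ (free construction, exactly as in the crossed-simplicial case), one can transport the tensor $\square$ of $\widehat{\mathbb{A}}$ along $i_!$, but for safety I would just take $\square$ to be the cartesian product and note promagmoidality is then automatic.

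For (AC2), the inclusion $\Delta\hookrightarrow\mathbb{A}\mathfrak{G}$ factors as $\Delta\hookrightarrow\Delta\mathfrak{G}\xrightarrow{G_Y}\mathbb{A}\mathfrak{G}$, which is faithful because $G_Y$ is an embedding (by the amalgamation property) and $\Delta\hookrightarrow\Delta\mathfrak{G}$ is faithful; it is a morphism of EZ-categories since both constituent inclusions are and the degree is preserved. The compatibility $i_!(X)\square i_!(Y)\simeq i_!(X\times Y)$ reduces, by adjunction and the fact that $i_!$ on $\mathbb{A}\mathfrak{G}$ is built by freely adjoining the $\text{Aut}_{\mathbb{A}\mathfrak{G}}(-)$-action levelwise over the composite $\Delta\hookrightarrow\mathbb{A}\hookrightarrow\mathbb{A}\mathfrak{G}$, to the compatibility already known for $\Delta\hookrightarrow\mathbb{A}$ from (AC2) for $\mathbb{A}$, combined with the compatibility known for $\Delta\hookrightarrow\Delta\mathfrak{G}$; here one uses that $\Delta$ is a sieve in $\mathbb{A}$ so that the only automorphisms acting on linear objects are those coming from $\mathfrak{G}$, and these are exactly the ones present in $\Delta\mathfrak{G}$.

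For (AC3), the pushout-product of normal monomorphisms, with one of them linear, being again a normal monomorphism: I would argue that linearity is defined via boundary inclusions $\partial\mathbb{A}\mathfrak{G}[a]\to\mathbb{A}\mathfrak{G}[a]$ for $a=i_![n]$, i.e.\ on the \emph{linear} objects, which lie in the image of $\Delta\mathfrak{G}$. The key point is that for such $a$, the presheaf $\mathbb{A}\mathfrak{G}[a]$ and its boundary are determined by restriction along $\Delta\mathfrak{G}\hookrightarrow\mathbb{A}\mathfrak{G}$ and along $\mathbb{A}\hookrightarrow\mathbb{A}\mathfrak{G}$, because $\Delta$ is a sieve: a map into a linear object $[n]$ in $\mathbb{A}\mathfrak{G}$ comes from $\Delta\mathfrak{G}$ by Lemma~\ref{morstruct} (the path must terminate without a non-trivial $f_2$). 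Hence the pushout-product against a linear normal mono can be checked after restriction to $\widehat{\Delta\mathfrak{G}}$, where (AC3) holds by Lemma~\ref{ismonoidalclosed} (the crossed-simplicial case, where in fact the full pushout-product axiom holds), and after restriction to $\widehat{\mathbb{A}}$ where (AC3) holds by hypothesis; normality of a monomorphism is a levelwise freeness-of-automorphisms condition (by the Lemma characterising normal monos), and the automorphism groups in $\mathbb{A}\mathfrak{G}$ at each object are either those of $\mathbb{A}$ or those of $\Delta\mathfrak{G}$, so freeness can be verified piecewise. \textbf{The main obstacle} I anticipate is precisely making this ``check piecewise'' argument rigorous: one must show that a monomorphism in $\widehat{\mathbb{A}\mathfrak{G}}$ is normal iff its restrictions to $\widehat{\mathbb{A}}$ and $\widehat{\Delta\mathfrak{G}}$ are normal \emph{and} that the pushout-product, formed in $\widehat{\mathbb{A}\mathfrak{G}}$, actually restricts compatibly — which hinges on $\square$ being cartesian (so it commutes with the restriction functors, these being right adjoints that preserve limits, and also preserving the relevant colimits since they are computed pointwise), and on the sieve condition guaranteeing that the linear boundary inclusions interact with a general object only through their $\Delta\mathfrak{G}$-part. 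Once that bookkeeping is set up, each of (AC1)--(AC3) follows by transporting the corresponding property from $\mathbb{A}$ and from $\Delta\mathfrak{G}$ along the embeddings $G_X$, $G_Y$.
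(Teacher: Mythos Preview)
Your proposal follows essentially the same route as the paper: (AC1) via Corollary~\ref{amalgcross} and Lemma~\ref{crossisez}, (AC2) via the amalgamation embedding, and (AC3) by reduction to the already-established pushout-product properties in $\mathbb{A}$ and in $\Delta\mathfrak{G}$. The paper's own proof is considerably terser than yours---for instance, it dispatches (AC3) in a single sentence (``holds as we have proved that it does in $\mathbb{A}$ and $\Delta\mathfrak{G}$'') without spelling out the restriction-and-piecewise-check argument you outline---so the obstacle you flag is real but is simply not addressed in the paper's text.
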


\begin{proof}\leavevmode
\begin{enumerate}
\item[(AC1)] We have shown in Corollary \ref{amalgcross} that $\mathbb{A} \mathfrak{G}$ has the structure of a crossed $\mathbb{A}$-group and is therefore an EZ-category.  The tensor structure is provided by the tensor structure of $\mathbb{A}$.
\item[(AC2)]  We have an inclusion $i \colon \Delta \hookrightarrow \mathbb{A} \mathfrak{G}$ coming from the fact that the pushout has the amalgamation property.  Moreover this inclusion is compatible with the tensor product by construction.
\item[(AC3)] The required pushout-product property holds as we have proved that it does in $\mathbb{A}$ and ${\Delta \mathfrak{G}}$.
\end{enumerate}
\end{proof}

\begin{example}
The category $\Omega$ is amalgable as $\Delta$ is exhibted as a sieve by the inclusion.  Therefore for any crossed simplicial group $\Delta \mathfrak{G}$, we can form the category $\Omega \mathfrak{G}$.  The question is then what is the homotopy type of $\widehat{\Omega \mathfrak{G}}$ is. For $\widehat{\Omega \mathfrak{C}}$ one could expect the Kan model structure to have the homotopy type of connective spectra with $SO(2)$-action.  Note that this category should have some, even if weak, relation to the category $\Xi$ which is used for the theory of higher cyclic operads~\cite{cycoperad}. 

\end{example}

\bibliographystyle{siam}
\bibliography{Bibliography}

\end{document}